\def\journal@name{}
\let\counterwithin\relax
\newtheorem{assumption}{Assumption}
\newtheorem{theorem}{Theorem}
\newtheorem{corollary}[theorem]{Corollary}
\newtheorem{definition}[theorem]{Definition}
\newtheorem{example}[theorem]{Example}
\newtheorem{lemma}[theorem]{Lemma}
\newtheorem{proposition}[theorem]{Proposition}
\newtheorem{remark}[theorem]{Remark}
\newenvironment{proof}[1][Proof]{\noindent\textbf{#1.} }{\ \rule{0.5em}{0.5em}}
\newenvironment{sketchproof}[1][Sketch of the proof]{\noindent\textbf{#1.} }{\ \rule{0.5em}{0.5em}}
\newcommand{\R}{\mathbb{R}}
\newcommand{\Nedges}{N_{\alpha}^{(e)}}
\newcommand{\var}{\mathrm{var}}
\newcommand{\1}[1]{\mathds{1}_{#1}}
\def\R{\mathbb R}
\renewcommand{\Pr}{\mathrm{pr}}
\renewcommand{\algocf@captiontext}[2]{#1\algocf@typo. \AlCapFnt{}#2} 
\def\@algocf@capt@plain{top}
\renewcommand{\algocf@makecaption}[2]{%
  \addtolength{\hsize}{\algomargin}%
  \sbox\@tempboxa{\algocf@captiontext{#1}{#2}}%
  \ifdim\wd\@tempboxa >\hsize
    \hskip .5\algomargin%
    \parbox[t]{\hsize}{\algocf@captiontext{#1}{#2}}
  \else%
    \global\@minipagefalse%
    \hbox to\hsize{\box\@tempboxa}
  \fi%
  \addtolength{\hsize}{-\algomargin}%
}
\begin{document}

\begin{frontmatter}
\begin{aug}
\title{On sparsity, power-law and clustering properties of graphex processes}
\date{\today}
\author{Fran\c cois Caron\ead[label=e1]{caron@stats.ox.ac.uk}},
\author{Francesca Panero\ead[label=e2]{f.panero@lse.ac.uk}}
\and
\author{Judith Rousseau\ead[label=e3]{judith.rousseau@stats.ox.ac.uk}}

\affiliation{University of Oxford}

\address{Department of Statistics, University of Oxford. }
\end{aug}

\begin{abstract}
This paper investigates properties of the class of graphs based on exchangeable point processes. We provide asymptotic expressions for the number of edges, number of nodes and degree distributions, identifying four regimes: (i) a dense regime, (ii) a sparse almost dense regime, (iii) a sparse regime with power-law behaviour, and (iv) an almost extremely sparse regime. We show that, under mild assumptions, both the global and local clustering coefficients converge to constants which may or may not be the same. We also derive a central limit theorem for subgraph counts and for the number of nodes.  Finally, we propose a class of models within this framework where one can separately control the latent structure and the global sparsity/power-law properties of the graph.
\end{abstract}

\begin{keyword}[class=MSC]
\kwd[Primary ]{05C80}
\kwd[. Secondary ]{60F15; 60G55}
\end{keyword}

\begin{keyword}
\kwd{networks}
\kwd{sparsity}
\kwd{Poisson processes}
\kwd{community structure}
\kwd{power-law}
\kwd{generalised graphon}
\kwd{transitivity}
\kwd{subgraph counts}
\end{keyword}

\end{frontmatter}

\section{Introduction}

The ubiquitous availability of large, structured network data in various scientific areas ranging from biology to social sciences has been a driving force in the development of statistical network models~\citep{Kolaczyk2009,Newman2010}. Vertex-exchangeable random graphs, also known as $W$-random graphs or graphon models~\citep{Hoover1979,Aldous1981,Lovasz2006,Diaconis2008} offer in particular a flexible and tractable class of random graph models. It includes many models, such as the stochastic block-model~\citep{Nowicki2001}, as special cases. Various parametric and nonparametric model-based approaches~\citep{Palla2010,Lloyd2012,Latouche2016}, or nonparametric estimation procedures~\citep{Wolfe2013,Chatterjee2015,Gao2015} have been developed within this framework. Although very flexible, it is known that vertex-exchangeable random graphs are dense~\citep{Lovasz2006,Orbanz2015}, that is the number of edges scales quadratically with the number of nodes; this property is considered unrealistic for many real-world networks.
\begin{figure}[t]
\begin{center}
\includegraphics[width=.45\textwidth]{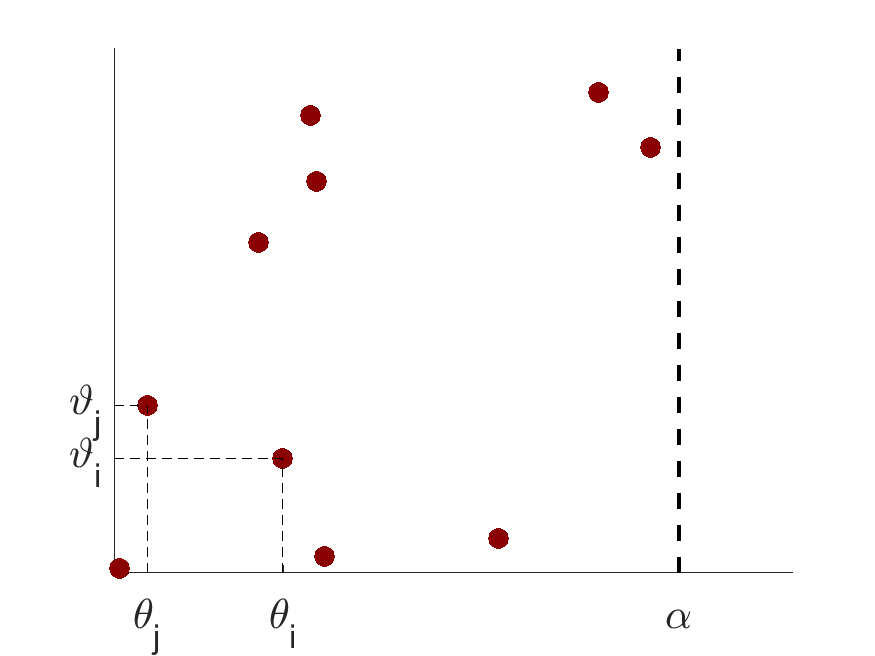}
\includegraphics[width=.45\textwidth]{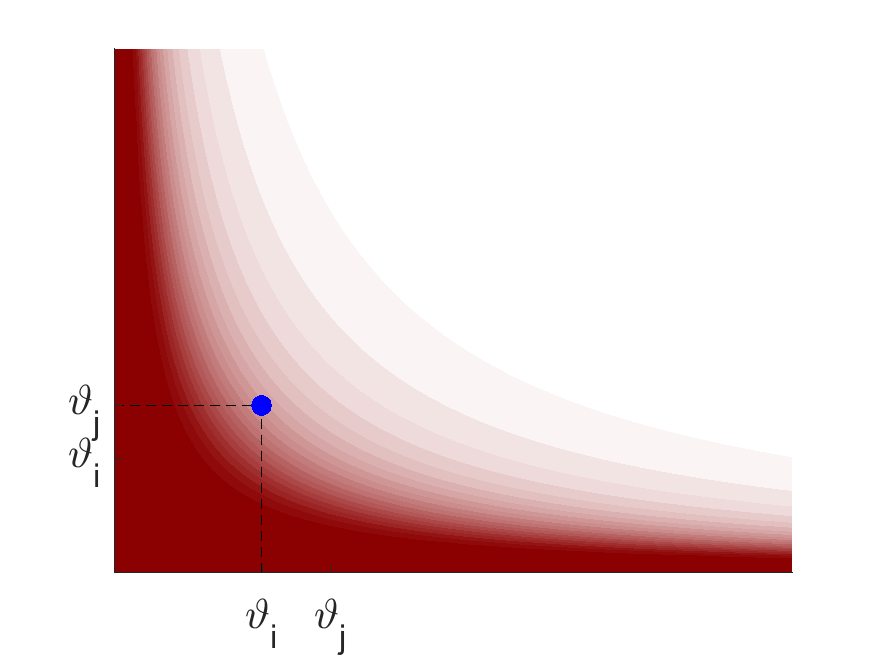}
\end{center}
\caption{Illustration of the graph model based on exchangeable point processes. (left) A unit-rate Poisson process
$(\theta_{i},\vartheta_{i})$, $i\in\mathbb{N}$ on $(0,\alpha]\times
\mathbb{R}_{+}$. (right) For each pair $i\leq j$, set $Z_{ij}=Z_{ji}=1$ with probability
$W(\vartheta_{i},\vartheta_{j})$. Here, $W$ is indicated by the red shading
(darker shading indicates higher value). Similar to Figure 5 in \citep{Caron2017}.}%
\label{fig:Kallenberg}
\end{figure}
To achieve sparsity, rescaled graphon models have been proposed in the literature~\citep{BollobasRiordan2009,BickelChen2009,BickelChenLevina2011,Wolfe2013}. While these models can capture sparsity, they are not projective; additionally, standard rescaled graphon models cannot simultaneously capture sparsity and a clustering coefficient bounded away from 0 (see Section \ref{sec:discussion}).

These limitations are overcome by another line of works initiated by~\cite{Caron2017}, \cite{Veitch2015} and \cite{Borgs2018}. They showed that,  by modeling the graph as an exchangeable point process, the classical vertex-exchangeable/graphon framework can be naturally extended to the sparse regime, while preserving its flexibility and tractability. In such a representation, introduced by \cite{Caron2017}, nodes are embedded at some location $\theta_i\in\mathbb R_+$, and the set of edges is represented by a point process on the plane
\begin{equation}
\sum_{i,j} Z_{ij} \delta_{(\theta_i,\theta_j)}
\label{eq:pointprocess}
\end{equation}
where $Z_{ij}=Z_{ji}$ is a binary variable indicating if there is an edge between node $\theta_i$ and node $\theta_j$. Finite-size graphs are obtained by restricting the point process \eqref{eq:pointprocess} to points $(\theta_i,\theta_j)$ such that $\theta_i,\theta_j\leq\alpha$, with $\alpha$ a positive parameter controlling the size of the graph. Focusing on a particular construction as a case study, \cite{Caron2017} showed that one can obtain sparse and exchangeable graphs within this framework; they also pointed out that exchangeable random measures admit a representation theorem due to~\cite{Kallenberg1990}, giving a general construction for such graph models. \cite{Herlau2016}, \cite{Todeschini2016} developed sparse graph models with (overlapping) community structure within this framework.  \cite{Veitch2015} and \cite{Borgs2018} showed how such construction naturally generalizes the dense exchangeable graphon framework to the sparse regime, and analysed some of the properties of the associated class of random graphs, called \textit{graphex processes}\footnote{\cite{Veitch2015} introduced the term \textit{graphex}. In the same paper, they referred to the class of random graphs as \textit{Kallenberg exchangeable graphs}, but the term \text{graphex processes} is now more commonly used.}; further properties were derived by \cite{Janson2016,Janson2017a}, \cite{Veitch2016} and \cite{Borgs2019a}.  Following the notations of \cite{Veitch2015}, and ignoring additional terms corresponding to stars and isolated edges, the graph is  then parameterised by a symmetric measurable function $W:\mathbb R^2_+\rightarrow[0,1]$, where for each $i\leq j$,
\begin{equation} \label{eq:Zij} 
Z_{ij}\mid (\theta_k,\vartheta_k)_{k=1,2,\ldots}\sim \text{Bernoulli}\{W(\vartheta_i,\vartheta_j)\},
\end{equation}
where $(\theta_k,\vartheta_k)_{k=1,2,\ldots}$ is a unit-rate Poisson process on $\mathbb R^2_+$. See Figure~\ref{fig:Kallenberg} for an illustration of the model construction.
The function $W$ is a natural generalisation of the graphon for dense exchangeable graphs~\citep{Veitch2015,Borgs2018} and we refer to it as the graphon function.

This paper investigates asymptotic properties of the general class of graphs based on exchangeable point processes defined by Equations \eqref{eq:pointprocess} and \eqref{eq:Zij}. Our findings can be summarised as follows.
\begin{itemize}
\item[(i)] We relate the sparsity and power-law properties of the graph to the tail behaviour of the marginal of the graphon function $W$, identifying four regimes: a) a dense regime, b) a sparse (almost dense) regime without power-law behaviour, c) a sparse regime with power-law behaviour, and d) an almost extremely sparse regime. In the sparse, power-law regime, the power-law exponent is in the range $(1,2)$.
\item[(ii)] We derive the asymptotic properties of the global and local clustering coefficients, two standard measures of the transitivity of the graph.
\item [(iii)] We give a central limit theorem for subgraph counts and for the number of nodes in the graph.
\item[(iv)] We introduce a parametrisation that allows to model separately the global sparsity structure and other local properties such as community structure. Such a framework enables us to sparsify any dense graphon model, and to characterise its sparsity properties.
\item[(v)] We show that the results apply to a wide range of sparse and dense graphex processes, including the models studied by \cite{Caron2017}, \cite{Herlau2016} and \cite{Todeschini2016}.
\end{itemize}

\begin{figure}[t!]
\centering
\subfigure[Power-law Degree distribution]{\includegraphics[width=.32\textwidth]{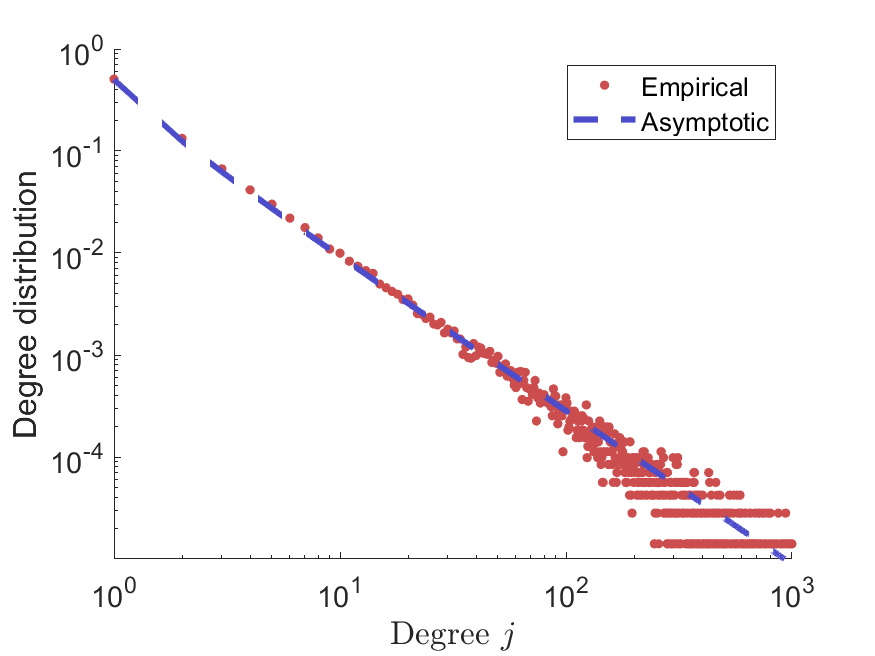}}
\subfigure[Average local and global clustering coefficients]{\includegraphics[width=.32\textwidth]{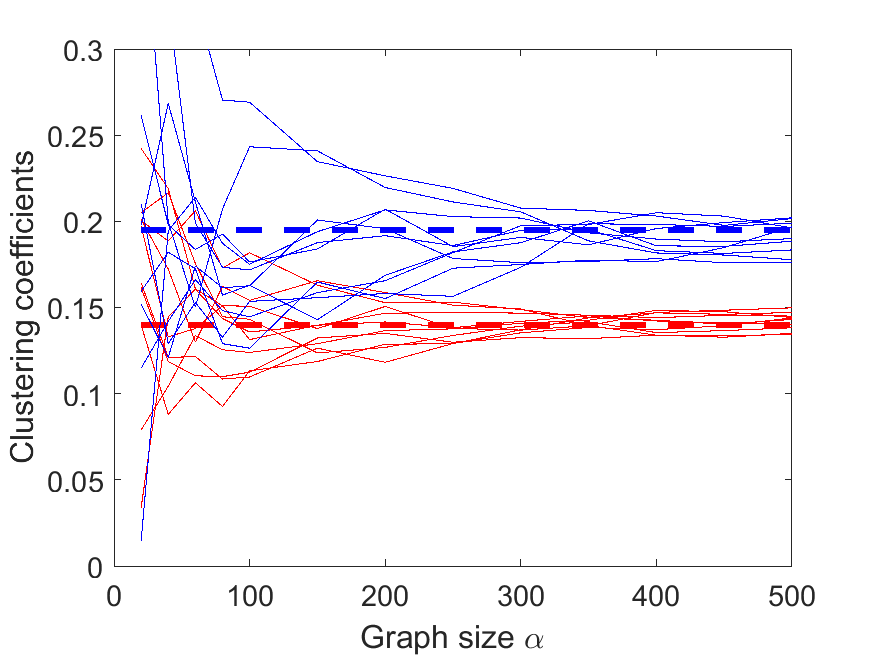}}
\subfigure[Local clustering coefficient per degree]{\includegraphics[width=.32\textwidth]{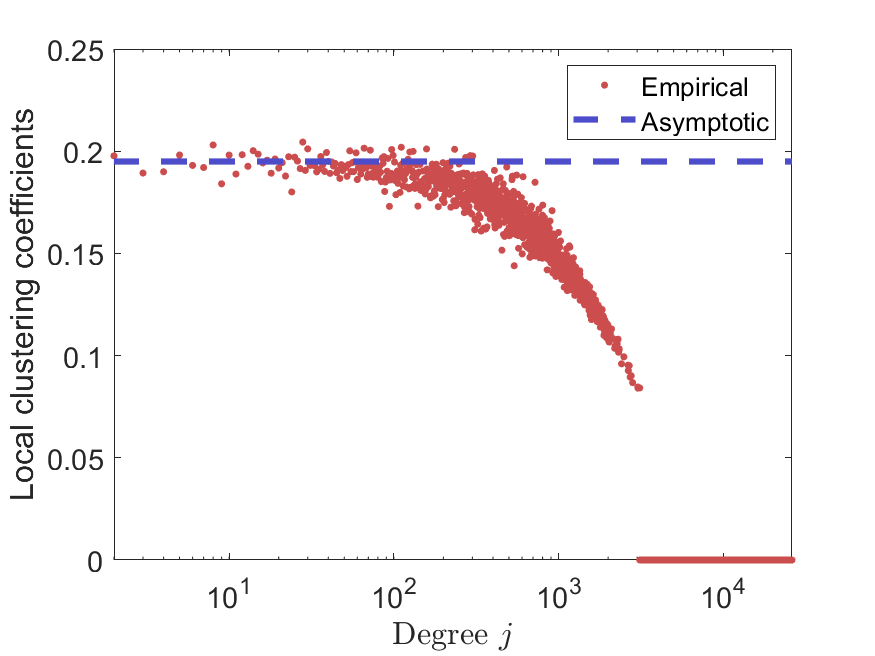}}
\caption{Illustration of some of the asymptotic results developed in this paper, applied to the generalised graphon model defined by Equations \eqref{eq:graphonCF} and \eqref{eq:GGP} with $\sigma_0=0.2$ and $\tau_0=2$. (a) Empirical degree distribution for a graph of size $\alpha=1000$ (red) and asymptotic degree distribution (dashed blue, see Corollary \ref{th:sparsity1}). (b) Average local (blue) and global (red) clustering coefficients for 10 graphs of growing sizes. Limit values are represented by dashed lines (see Propositions \ref{prop:globalclustering} and \ref{prop:localclustering}). (c) Local clustering coefficient for nodes of a given degree $j$, for a graph of size $\alpha=1000$. The limit value is represented by a dashed line (see Proposition \ref{prop:localclustering}).}
\label{fig:illustration}
\end{figure}

Some of the asymptotic results are illustrated in Figure~\ref{fig:illustration} for a specific graphex process in the sparse, power-law regime.

The article is organised as follows. In Section~\ref{sec:main} we give the notations and the main Assumptions. In Section~\ref{sec:asympstats}, we derive the asymptotic results for the number of nodes, degree distribution and clustering coefficients. In Section~\ref{sec:clt}, we derive central limit theorems for subgraphs and for the number of nodes. Section~\ref{sec:discussion} discusses related work. In Section \ref{sec:examples} we provide specific examples of sparse and dense graphs and show how to apply the results of the previous section to those models. In Section~\ref{sec:localglobal} we describe a generic construction for graphs with local/global structure and adapt some results of Section \ref{sec:asympstats} to this setting. Most of the proofs are given in the main text, with some longer proofs in the Appendix, together with some technical lemma and background material. Other more technical proofs are given in a Supplementary Material \citep{CaronRousseau2017:supplement}.\smallskip

Throughout the document, we use the notations $X_\alpha \sim Y_\alpha$ and $X_\alpha=o(Y_\alpha)$ respectively for $X_\alpha/Y_\alpha\rightarrow 1$ and $X_\alpha/Y_\alpha\rightarrow 0$. Both notations $X_\alpha\lesssim Y_\alpha$ and $X_\alpha=O(Y_\alpha)$ are used for $\lim\sup X_\alpha/Y_\alpha<\infty$. The notation $X_\alpha\asymp Y_\alpha$ means both $X_\alpha\lesssim Y_\alpha$ and $Y_\alpha\lesssim X_\alpha$ hold. All unspecified limits are when $\alpha$ tends to infinity. When $X_\alpha$ and/or $Y_\alpha$ are random quantities, the asymptotic relation is meant to hold almost surely.

\section{Notations and Assumptions}\label{sec:main}
\subsection{Notations}
\label{sec:notations}

Let $M=\sum_{i}\delta_{(\theta_{i},\vartheta_{i})}$ be a unit-rate Poisson
random measure on $(0,+\infty)^{2}$ and $W:[0,+\infty)^{2}\rightarrow
\lbrack0,1]$ a symmetric measurable function such that $\lim_{x\to\infty} W(x,x)$ and $\lim_{x\to 0} W(x,x)$ both exist \footnote{By \eqref{eq:integrabilityW}, this implies $\lim_{x\to\infty} W(x,x)=0$.} and
\begin{equation}
0<\overline W=\int_{\mathbb{R}_{+}^{2}}W(x,y)dxdy<\infty, \quad \int_{0}^{\infty}W(x,x)dx<\infty. \label{eq:integrabilityW}
\end{equation}

Let $(U_{ij})_{i,j\in\mathbb{N}^{2}}$ be a
symmetric array of independent random variables, with $U_{ij}\sim U(0,1)$ if $i\leq j$ and $U_{ij}=U_{ji}$ for $i>j$. Let $Z_{ij}=\1{U_{ij}\leq W(\vartheta_{i},\vartheta_{j})}$ be a binary random variable indicating if there is a link between $i$ and $j$, where $\1{A}$ denotes the indicator function.

Restrictions of the point process $\sum_{ij} Z_{ij}\delta_{(\theta_i,\theta_j)}$ to squares $[0,\alpha]^2$ then define a growing family of random graphs $(\mathcal G_\alpha)_{\alpha\geq 0}$, called a graphex process, where $\mathcal G_\alpha=(\mathcal V_\alpha,\mathcal E_\alpha)$ denotes a graph of size $\alpha\geq 0$ with vertex set $\mathcal V_\alpha$ and edge set $\mathcal E_\alpha$, defined by
\begin{align}
\mathcal V_\alpha &=\left \{ \theta_i \mid \theta_i\leq \alpha\text{ and }\exists \theta_k\leq \alpha\text{ s.t. }Z_{ik}=1\right \} \\
\mathcal E_\alpha &=\left \{ \{\theta_i,\theta_j\} \mid \theta_i,\theta_j\leq \alpha\text{ and }Z_{ij}=1\right \}.
\end{align}
The connection between the point process and graphex process is illustrated in Figure~\ref{fig:pointgraph}. The conditions \eqref{eq:integrabilityW}  are sufficient (though not necessary) conditions for $|\mathcal E_\alpha|$ (hence $|\mathcal V_\alpha|$) to be almost surely finite, and the graphex process  well defined~\cite[Theorem 4.9]{Veitch2015}. Note crucially that the graphs $\mathcal G_\alpha$ have no isolated vertices (that is, no vertices of degree 0), and that the number of nodes $|\mathcal V_\alpha|$ and edges $|\mathcal E_\alpha|$ are both random variables.\medskip

\begin{figure}
\centering
\begin{minipage}{0.45\textwidth}
\subfigure[Point process $\sum_{ij} Z_{ij}\delta_{(\theta_i,\theta_j)}$]{\scalebox{1.15}{\begin{tikzpicture}[node distance=1.4cm, auto,>=latex',scale=.8]
\draw[->,ultra thick] node[left] {0} (0,0) -- (6,0) node[right] {};
\draw[->,ultra thick] (0,0) -- (0,-6) node[right] {};
\draw[fill=blue,opacity=0.1] (0,0) -- (0,-6) -- (6,-6) -- (6,0)  ;
\draw[dashed] (1,0) -- (1,-1) node[right] {};
\draw[dashed] (3,0) -- (3,-3) node[right] {};
\draw[dashed] (3.5,0) -- (3.5,-3.5) node[right] {};
\draw[dashed] (5,0) -- (5,-5) node[right] {};
\draw[dashed] (0,-1) -- (1,-1) node[right] {};
\draw[dashed] (0,-3) -- (3,-3) node[right] {};
\draw[dashed] (0,-3.5) -- (3.5,-3.5) node[right] {};
\draw[dashed] (0,-5) -- (5,-5) node[right] {};
\node at (1,0.5) {1};
\node at (3,0.5) {3};
\node at (3.5,0.5) {3.5};
\node at (5,0.5) {5};
\node at (6,.5) {$\alpha$};
\node[draw,circle,inner sep=2.5pt,fill=blue,shading=ball] at (1,-1) {};
\node[draw,circle,inner sep=2.5pt,fill=blue,shading=ball] at (1,-5) {};
\node[draw,circle,inner sep=2.5pt,fill=blue,shading=ball] at (3,-1.5) {};
\node[draw,circle,inner sep=2.5pt,fill=blue,shading=ball] at (3.5,-1) {};
\node[draw,circle,inner sep=2.5pt,fill=blue,shading=ball] at (5,-1) {};
\node[draw,circle,inner sep=2.5pt,fill=blue,shading=ball] at (1.5,-3) {};
\node[draw,circle,inner sep=2.5pt,fill=blue,shading=ball] at (1,-3.5) {};
\node[draw,circle,inner sep=2.5pt,fill=blue,shading=ball] at (5,-3.5) {};
\node[draw,circle,inner sep=2.5pt,fill=blue,shading=ball] at (3.5,-5) {};
\node[draw,circle,inner sep=2.5pt,fill=blue,shading=ball] at (5,-0.5) {};
\node[draw,circle,inner sep=2.5pt,fill=blue,shading=ball] at (0.5,-5) {};
\node[draw,circle,inner sep=2.5pt,fill=blue,shading=ball] at (5,-5) {};

\end{tikzpicture}}}
\end{minipage}
\begin{minipage}{0.45\textwidth}
\centering
\subfigure[$\alpha\in[1,3)$]{\fbox{\scalebox{.7}{\begin {tikzpicture}[-latex ,auto ,node distance =1.5 cm and 1.5cm ,on grid ,
semithick ,
state/.style ={ circle ,top color =white , bottom color = blue!20 ,
draw,blue , text=blue , minimum width =1 cm},every loop/.style={}]
\node[state] (t1) {$1$};
\node[] (t3) [left=of t1] {};
\node[] (t15) [above=of t3] {};

\draw[-] (t1) edge [loop above] (t1);

\end{tikzpicture} }}}~~~~~~~~~~~
\subfigure[$\alpha\in[3,3.5)$]{\fbox{\scalebox{.7}{\begin {tikzpicture}[-latex ,auto ,node distance =1.5 cm and 1.5cm ,on grid ,
semithick ,
state/.style ={ circle ,top color =white , bottom color = blue!20 ,
draw,blue , text=blue , minimum width =1 cm},every loop/.style={}]
\node[state] (t1) {$1$};
\node[state] (t3) [left=of t1] {$3$};
\node[state] (t15) [above=of t3] {$1.5$};

\draw[-] (t1) edge [loop above] (t1);
\draw[-] (t3) -- (t15);

\end{tikzpicture}}}}\\
\subfigure[$\alpha\in[3.5,5)$]{\fbox{\scalebox{.7}{\begin {tikzpicture}[-latex ,auto ,node distance =1.5 cm and 1.5cm ,on grid ,
semithick ,
state/.style ={ circle ,top color =white , bottom color = blue!20 ,
draw,blue , text=blue , minimum width =1 cm},every loop/.style={}]
\node[state] (t1) {$1$};
\node[state] (t3) [left=of t1] {$3$};
\node[state] (t15) [above=of t3] {$1.5$};
\node[state] (t35) [below=of t1] {$3.5$};

\draw[-] (t1) edge [loop above] (t1);
\draw[-] (t3) -- (t15);
\draw[-] (t1) -- (t35);

\end{tikzpicture}}}}
\subfigure[$\alpha=5$]{\fbox{\scalebox{.7}{\begin {tikzpicture}[-latex ,auto ,node distance =1.5 cm and 1.5cm ,on grid ,
semithick ,
state/.style ={ circle ,top color =white , bottom color = blue!20 ,
draw,blue , text=blue , minimum width =1 cm},every loop/.style={}]
\node[state] (t1) {$1$};
\node[state] (t3) [left=of t1] {$3$};
\node[state] (t15) [above=of t3] {$1.5$};
\node[state] (t35) [below=of t1] {$3.5$};
\node[state] (t5) [right=of t1] {$5$};
\node[state] (t05) [above=of t5] {$0.5$};

\draw[-] (t1) edge [loop above] (t1);
\draw[-] (t3) -- (t15);
\draw[-] (t1) -- (t35);
\draw[-] (t5) -- (t1);
\draw[-] (t5) -- (t35);
\draw[-] (t5) -- (t05);
\draw[-] (t5) edge [loop right] (t5);

\end{tikzpicture}}}}
\end{minipage}
\caption{Illustration of the connection between the point process on the plane and the graphex process. (a) Point process $\sum_{ij} Z_{ij}\delta_{(\theta_i,\theta_j)}$ on the plane. (b-e) Associated graphs $\mathcal G_\alpha$ for (b) $\alpha\in [1,3)$, (c) $\alpha\in[3,3.5)$, (d) $\alpha\in [3.5,5)$ and (e) $\alpha=5$. Note that the graph is empty for $\alpha<1$.}
\label{fig:pointgraph}
\end{figure}

We now define a number of summary statistics of the graph $\mathcal G_\alpha$.
For $i\geq 1$, let
 $$D_{\alpha,i}=\sum_{k}Z_{ik}\1{\theta_{k}\leq\alpha
}.$$ If $\theta_i\in \mathcal V_\alpha$, then $D_{\alpha,i}\geq 1$ corresponds to the degree of the node $\theta_i$ in the graph $\mathcal G_\alpha$ of size $\alpha$; otherwise $D_{\alpha,i}=0$. Let $N_{\alpha
}=|\mathcal V_\alpha|$ and  $N_{\alpha,j}$ be the number of  nodes and the number of nodes of degree $j,\,j\ge 1$ respectively,
\begin{equation}
N_{\alpha}=\sum_{i}\1{\theta_{i}\leq\alpha}\1{D_{\alpha,i}\geq 1}, \quad N_{\alpha,j}=\sum_{i}\1{\theta_{i}\leq\alpha}\1{D_{\alpha,i}=j} \label{eq:nbnodes}
\end{equation}
and $N^{(e)}_{\alpha}=|\mathcal E_\alpha|$ the number of edges
\begin{equation}
N^{(e)}_{\alpha}=\frac{1}{2}\sum_{i\neq j}Z_{ij} \1{\theta_{i}\leq\alpha}\1{\theta_{j}%
\leq\alpha}+\sum_{i}%
Z_{ii}\1{\theta_{i}\leq\alpha}.\label{eq:nbedges}
\end{equation}
For $i\geq 1$, let
\begin{equation}
T_{\alpha, i}=\frac{1}{2}\sum_{j,k\mid j\neq k\neq i}Z_{ij}Z_{jk}Z_{ik}\1{\theta
_{i}\leq\alpha}\1{\theta_{j}\leq\alpha}\1{\theta_{k}\leq\alpha}.
\end{equation}
If $\theta_i\in \mathcal V_\alpha$, $T_{\alpha, i}$ corresponds to the number of triangles containing node $\theta_i$ in the graph $\mathcal G_\alpha$, otherwise $T_{\alpha, i}=0$. Let
\begin{equation}
T_{\alpha}=\frac{1}{3}\sum_{i}T_{\alpha,i}=\frac{1}{6}\sum_{i\neq j\neq k}Z_{ij}Z_{jk}Z_{ik}\1{\theta
_{i}\leq\alpha}\1{\theta_{j}\leq\alpha}\1{\theta_{k}\leq\alpha}%
\end{equation}
denote the total number of triangles and
\begin{align}
A_{\alpha}   =\sum_{i}\frac{D_{\alpha,i}(D_{\alpha,i}-1)}{2} =\frac{1}{2}\sum_{i\neq j\neq k}Z_{ij}Z_{jk}\1{\theta_{i}\leq\alpha
}\1{\theta_{j}\leq\alpha}\1{\theta_{k}\leq\alpha}
\end{align}
the total number of adjacent edges in the graph $\mathcal G_\alpha$. The global clustering coefficient, also known as the transitivity coefficient, is defined as
\begin{equation}
C_{\alpha}^{(g)}=\frac{3T_{\alpha}}{A_{\alpha}}\label{eq:globalclusteringdef}
\end{equation}
if $A_{\alpha}\geq 1$ and 0 otherwise. The global clustering coefficient counts the proportion of closed connected triplets over all the connected triplets, or equivalently the fraction of pairs of nodes connected to the same node that are themselves connected, and is a standard measure of the transitivity of a network~\citep[Section 7.9]{Newman2010}. Another measure of the transitivity of the graph is the local clustering coefficient. For any degree $j\geq 2$, define
\begin{equation}
C_{\alpha,j}^{(\ell)}=\frac{2}{j(j-1)N_{\alpha,j}}\sum_{i}T_{\alpha,i}\1{D_{\alpha,i}=j}
\end{equation}
if $N_{\alpha,j}\geq 1$ and 0 otherwise. $C_{\alpha,j}^{(\ell)}$ corresponds to the proportion of pairs of neighbours of nodes of degree $j$ that are connected. The average local clustering coefficient is obtained by
\begin{equation}
\overline{C}_{\alpha}^{(\ell)}=\frac{1}{N_{\alpha}-N_{\alpha,1}}\sum
_{j\geq2}N_{\alpha,j}C_{\alpha,j}^{(\ell)}
\end{equation}
if $N_{\alpha}-N_{\alpha,1}\geq 1$ and $\overline{C}_{\alpha}^{(\ell)}=0$ otherwise.

\subsection{Assumptions}
\label{sec:assumptions}

 We will make use of the following three assumptions. Assumption \ref{assumpt:1} characterises the behaviour of the small degree nodes. Assumption~\ref{assumpt:2} is a technical assumption to obtain the almost sure results. Assumption \ref{assumpt:3} characterises the behaviour of large degree nodes.\smallskip

A central quantity of interest in the analysis of the asymptotic properties of graphex processes is the marginal generalised graphon function $\mu:(0,\infty)\to\mathbb R_+$, defined for $x>0$ by
\begin{equation}
\mu(x)=\int_0^\infty W(x,y)dy\label{eq:marginalgraphon}
.
\end{equation}
The integrability of the generalised graphon $W$ implies that $\mu$ is integrable. Ignoring loops (self-edges), the expected number of connections of a node with parameter $\vartheta$ is proportional to $\mu(\vartheta)$. Therefore, assuming $\mu$ is monotone decreasing, its behaviour at infinity controls the small degree nodes, while its behaviour at 0 controls the large degree nodes.

For mathematical convenience, it will be easier to work with the generalised inverse $\mu^{-1}$ of $\mu$. The behaviour at 0 of $\mu^{-1}$ then controls the small degree nodes, while the behaviour of $\mu^{-1}$ at infinity controls large degree nodes.

The following assumption characterises the behaviour of $\mu$ at infinity or, equivalently, of $\mu^{-1}$ at 0. We require $\mu^{-1}$ to behave approximately as a power function $x^{-\sigma}$ around 0, for some $\sigma\in[0,1]$. This behaviour, known as regular variation, has been extensively studied  (see, e.g., \cite{Bingham1987}) and we provide some background on it in Appendix~\ref{sec:regularvariation}.

\begin{assumption}\label{assumpt:1}
Assume $\mu$ is non-increasing,  with
generalised inverse $\mu^{-1}(x)=\inf\{ y>0 \mid \mu(y)\leq  x\}$, such that
\begin{equation}
\mu^{-1}(x)\sim\ell(1/x)x^{-\sigma}\text{ as }x\rightarrow0
\end{equation}
where $\sigma\in\lbrack0,1]$ and $\ell$ is a slowly varying function at
infinity: for all $c>0$, $\lim_{t\rightarrow\infty}\ell(ct)/\ell(t)=1.$
\end{assumption}
Examples of slowly varying functions $\ell$ include functions converging to a strictly positive constant, or powers of logarithms. Note that Assumption 1 implies that, for $\sigma\in(0,1)$, $\mu(t)\sim \overline \ell(t)t^{-1/\sigma}\text{ as }t\rightarrow\infty$ for some slowly varying function $\overline \ell$.  We can differentiate four cases, as it will be formally derived in Corollary \ref{th:sparsity1}.
\begin{itemize}
\item[(i)]  Dense case: $\sigma=0$ and $\lim_{t\rightarrow\infty}\ell(t)<\infty$. In this case, $\lim_{x\rightarrow 0}\mu^{-1}(x)<\infty$, hence $\mu$ has bounded support. The other three cases are all sparse cases.
\item[(ii)] Almost dense case:  $\sigma=0$ and $\lim_{t\rightarrow\infty}\ell(t)=\infty$. In this case $\mu$ has full support and super-polynomially decaying tails.
\item[(iii)] Sparse case with  power law:  $\sigma\in(0,1)$. In this case $\mu$ has full support and polynomially decaying tails (up to a slowly varying function).
\item[(iv)] Very sparse case:  $\sigma=1$. In this case $\mu$ has full support and very light tails. In order for $\mu^{-1}$ (and hence $W$) to be integrable, we need $\ell$ to go to zero sufficiently fast.
\end{itemize}

Now define, for $x,y>0$
\begin{equation}\label{eq:nu}
\nu(x,y)=\int_0^\infty W(x,z)W(y,z)dz.
\end{equation}
The expected number of common neighbours of nodes with parameters $(\vartheta_1,\vartheta_2)$ is proportional to $\nu(\vartheta_1,\vartheta_2)$.

The following assumption is a technical assumption needed in order to obtain the almost sure results on the number of nodes and degrees. \cite{Veitch2015} made a similar assumption to obtain results in probability, see the discussion section for further details.
\begin{assumption}\label{assumpt:2}
Assume that there exists $C_1,a >0$ and $x_0\geq 0$ such that for all $x,y>x_0$
\begin{equation}\label{eq:conda}
 \nu(x,y)\leq C_1 \mu(x)^a\mu(y)^a, \quad \mu(x_0)>0, \quad \left \{
\begin{array}{ll}
  a>\max\left(\frac{1}{2},\sigma\right) & \text{if }\sigma\in[0,1) \\
  a=1 & \text{if }\sigma=1.
\end{array}\right .
\end{equation}
\end{assumption}

\begin{remark}
Assumption \ref{assumpt:2} is trivially satisfied when the function $W$ is separable
$
W(x,y)= \mu(x)\mu(y)/\overline W.
$
Assumptions \ref{assumpt:1} and \ref{assumpt:2} are also satisfied if
\begin{equation}
\quad W(x,y)= 1-e^{-f(x)f(y)/\overline f}
\label{eq:bernoullipoissonlink}
\end{equation}
for some positive, non-increasing, measurable function $f$ with $\overline f=\int_0^\infty f(x)dx<\infty$ and generalised inverse $f^{-1}$ verifying
$
f^{-1}(x)\sim\ell(1/x)x^{-\sigma}\text{ as }x$ tends to 0. In this case, $\mu$ is monotone non-increasing. We have
\begin{align*}
\mu\{f^{-1}(x)\}&=\int_0^\infty \{1-e^{-x f(y)/\bar f}\}dy=x\int_0^\infty  e^{-xu/\bar f} f^{-1}(u)/\bar fdu
\sim x
\end{align*}
as $x$ tends to $0$ by dominated convergence.  \sloppy Hence $f\{\mu^{-1}(x)\}\sim x$ as $x$ tends to 0 and $f^{-1}[f\{\mu^{-1}(x)\}]\sim \ell(1/x)x^{-\sigma}$. Assumption \ref{assumpt:2} follows from the inequality $W(x,y)\leq f(x)f(y)/\overline f$. Other examples are considered in Section \ref{sec:examples}.
\end{remark}

\noindent The following assumption is used to characterise both the asymptotic behaviour of small and large degree nodes.
\begin{assumption}\label{assumpt:3}
Assume $\mu^{-1}(t)=\int_t^\infty f(x)dx$ where $f$ is continuous on $(0, \infty)$ and
\begin{align*}
(a) : \quad f(x) &\sim \tau x^{-\tau-1}\ell_2(x)\text{ as }x\to\infty\\
(b) : \quad  f(x) &\sim x^{-\tilde{\sigma}-1}\tilde{\ell}_2(1/x)\text{ as }x\to 0
\end{align*}
where $\tau>0,\tilde{\sigma} \leq 1$ and $\ell_{2}, \tilde{\ell}_2$  are slowly varying functions.
\end{assumption}
\sloppy Note that Assumption \ref{assumpt:3} implies that $\mu^{-1}(x)\sim x^{-\tau}\ell_2(x)\text{ as }x\to\infty$, and $\mu(t)\sim \overline \ell_2(t)t^{-1/\tau}\text{ as }t\rightarrow 0$ for some slowly varying function $\overline \ell_2$. Assumption \ref{assumpt:3} also implies  Assumption \ref{assumpt:1} with $\sigma = \max( \tilde \sigma , 0)$, $\ell(x)=\frac{1}{\sigma} \tilde{\ell}_2(x)$ if $\tilde \sigma\neq 0$, and $\ell(x)=o(\tilde{\ell}_2(x))$ if $\tilde \sigma= 0$.
\medskip

Finally, we state an assumption on $\nu(x, y)$, the quantity proportional to the expected number of common neighbours of two nodes with parameters $x$ and $y$, defined in Equation \eqref{eq:nu}. This technical assumption is used to prove a result on the asymptotic behaviour of the variance of the number of nodes (Proposition \ref{prop:variancenbnodes}) and the central limit theorem for sparse graphs enunciated in Section \ref{subsec:CLT_sparse}.
\begin{assumption}\label{assumpt:4}
Assume that there exists $0<C_{0}\leq C_1$ and
$x_{0}\geq0$ such that for all $x,y>x_{0}$%
\[
C_{0}\mu(x)\mu(y)\leq\nu(x,y)\leq C_1\mu(x)\mu(y).
\]
\end{assumption}
Assumption \ref{assumpt:4} holds when $W$ is separable, as well as in the model of \cite{Caron2017} under some moment conditions (see Section \ref{ex:caronfox}). Obviously, Assumption \ref{assumpt:4} implies that Assumption \ref{assumpt:2} is satisfied with $a=1$.

\section{Asymptotic behaviour of various statistics of the graph}\label{sec:asympstats}

\subsection{Asymptotic behaviour of the number of edges, number of nodes and degree distribution} \label{sec:theorems}
In this section we characterise the almost sure and expected behaviour of the number of nodes $N_\alpha$, number of edges $\Nedges$ and number of nodes with $j$ edges $N_{\alpha,j}$. These results allow us to provide precise statements about the sparsity of the graph and the asymptotic power-law properties of its degree distribution. 

We first recall existing results on the asymptotic growth of the number of edges. The growth of the mean number of edges has been shown by \cite{Veitch2015} and the almost sure convergence follows from~\citep[Proposition 56]{Borgs2018}.
\begin{proposition}
[Number of edges \citep{Veitch2015,Borgs2018}]\label{th:numberedges}
As $\alpha $ goes to infinity, almost surely
\begin{equation}
\Nedges \sim E(\Nedges)\sim \alpha^{2}\overline W/2.
\end{equation}
\end{proposition}
The following two theorems provide a description of the asymptotic behaviour of the terms $N_\alpha, N_{\alpha,j}$ in expectation and almost surely.

\begin{theorem}
\label{th:meannumbernodes} For $\sigma\in[0,1]$, let $\ell_\sigma$ be slowly varying functions defined as
\begin{align}\label{eq:defellsigma}
\ell_1(t)=\int_t^\infty y^{-1} \ell(y)dy~~~\text{ and }~~~
\ell_\sigma(t) = \ell(t) \Gamma(1-\sigma)\text{ for }\sigma \in[0,1).
\end{align}
Under Assumption~\ref{assumpt:1}, for all $\sigma \in [0,1]$,
\begin{equation}
  E(N_{\alpha})\sim\alpha^{1+\sigma}\ell_\sigma(\alpha).
\end{equation}
If $\sigma = 0$  then for $j\geq 1$  $$ E(N_{\alpha,j})=o\{\alpha\ell(\alpha)\}.$$ If $\sigma\in(0,1)$ then for $j\geq 1$
$$
    E(N_{\alpha,j})\sim\frac{\sigma\Gamma(j-\sigma)}{j!}\alpha^{1+\sigma
}\ell(\alpha) $$
Finally, if $\sigma= 1$,
$$
E(N_{\alpha,j})\sim \left \{
\begin{array}{ll}
  \alpha^{2}\ell_1(\alpha) & j=1 \\
  \frac{\alpha^{2}}{j(j-1)}\ell(\alpha) & j\geq 2
\end{array}\right .
$$
\end{theorem}

Theorem \ref{th:meannumbernodes} follows rather directly from asymptotic properties of regularly varying functions~\citep{Gnedin2007}, recalled in Lemma \ref{lemma:abelianvariations}  and \ref{lemma:abelianvariationsv2}  in the Appendix. Details of the proof are given in Appendix~\ref{sec:proofmeannodesedges}.  Note that $\ell(\alpha)=o(\ell_1(\alpha))$; hence, for $\sigma=1$, $E(N_{\alpha,j})=o\{E(N_{\alpha,1})\}$ for all $j\geq 2$. \medskip

\cite{Veitch2015} have shown that, under Assumption 2 with $a=1$, we have, in probability,
\[
  N_{\alpha}\sim E(N_{\alpha}),\quad \sum_{k\geq j} N_{\alpha,k}\sim E\left (\sum_{k\geq j} N_{\alpha,k}\right)~~\text{for }j\geq 1.
\]
The next theorem shows that the asymptotic equivalence holds almost surely under Assumptions \ref{assumpt:1} and  \ref{assumpt:2}. Additionally, combining these results with Theorem~\ref{th:meannumbernodes} allows us to characterise the almost sure asymptotic behaviour of the number of nodes and number of nodes of a given degree. The proof of Theorem \ref{th:asnumbernodes}  is given in Section \ref{sec:app:asnumbernodes}.

\begin{theorem}
\label{th:asnumbernodes} Under Assumptions~\ref{assumpt:1} and~\ref{assumpt:2}, we have almost surely as $\alpha$ tends to infinity
\begin{equation}
  N_{\alpha}\sim E(N_{\alpha}),\quad \sum_{k\geq j} N_{\alpha,k}\sim E\left (\sum_{k\geq j} N_{\alpha,k}\right)~~\text{for }j\geq 1.
  \label{eq:nodesexpectationalmostsure}
\end{equation}
Combining this with Theorem \ref{th:meannumbernodes}, we obtain that, for all $\sigma\in[0,1]$,
\[
  N_{\alpha}\sim\alpha^{1+\sigma}\ell_\sigma(\alpha).
\]
Moreover, for $j\geq 1$, if $\sigma = 0$ then    $N_{\alpha,j}=o\{\alpha\ell(\alpha)\} $, while if $0< \sigma < 1$
$$
    N_{\alpha,j}\sim\frac{\sigma\Gamma(j-\sigma)}{j!}\alpha^{1+\sigma
}\ell(\alpha). $$
If $\sigma = 1$, $N_{\alpha,1}\sim \alpha^{2}\ell_1(\alpha)$ and for all $j \geq 2$  we also have,
    $N_{\alpha,j}=o\{\alpha^{2}\ell_1(\alpha)\}.$
\end{theorem}

The following result is a corollary of Theorem~\ref{th:asnumbernodes} which shows how the parameter $\sigma$ relates to the sparsity and power-law properties of the graphs.
We denote $\ell^\#$ the de Bruijn conjugate (see definition \ref{def:debruijn} in the Appendix) of the slowly varying function $\ell$.

\begin{corollary}[Sparsity and power-law degree distribution]
\label{th:sparsity1}Assume Assumptions~\ref{assumpt:1} and~\ref{assumpt:2}.
For $\sigma\in[0,1]$, almost surely as  $\alpha$ tends to infinity,
\[
\Nedges\sim \frac{\overline W }{2} N_\alpha^{2/(1+\sigma)}\ell_\sigma^*(N_\alpha)
, \quad \ell_\sigma^*(y)=\left [\left \{\ell_\sigma^{1/(1+\sigma)}(y^{1/1+\sigma})\right \}^\#\right ]^2.
\]
$\ell_\sigma^*(y)$ is slow varying and the graph is dense if $\sigma=0$  and $\lim_{t\rightarrow\infty} \ell(t)=C<\infty$, as
$\Nedges/N_{\alpha}^2\rightarrow C^2\overline W/2$ almost surely. Otherwise, if $\sigma>0$ or $\sigma=0$ and $\lim_t \ell(t)=\infty$, the graph is sparse, as
$\Nedges/N_{\alpha}^2\rightarrow 0$. Additionally, for $\sigma\in [0,1)$, for any $j=1,2,\ldots$,
\begin{align}
\frac{N_{\alpha,j}}{N_\alpha}\rightarrow \frac{\sigma \Gamma(j-\sigma)}{j!\Gamma(1-\sigma)}\label{eq:powerlawdegree}
\end{align}
almost surely. If $\sigma>0$, this corresponds to a degree distribution with a power-law behaviour as, for $j$ large
$$
\frac{\sigma \Gamma(j-\sigma)}{j!\Gamma(1-\sigma)}\sim \frac{\sigma}{\Gamma(1-\sigma)j^{1+\sigma}}.
$$
For $\sigma=1$,
$N_{\alpha,1}/N_\alpha\rightarrow 1$ and $N_{\alpha,j}/N_\alpha\rightarrow 0\text{ for $j\geq 2$}
$, hence the nodes of degree 1 dominate in the graph.
\end{corollary}

\begin{remark}
If $\sigma=0$ and $\lim_{t\rightarrow\infty} \ell(t)=\infty$, the graph is almost dense, that is
$
\Nedges/N_\alpha^{2}\rightarrow 0\text{ and }\Nedges/N_\alpha^{2-\epsilon}\rightarrow \infty
$
for any $\epsilon>0$. If $\sigma=1$, the graph is almost extremely sparse~\citep{BollobasRiordan2009}, as
$
\Nedges/N_\alpha\rightarrow \infty\text{ and }\Nedges/N_\alpha^{1+\epsilon}\rightarrow 0
$
for any $\epsilon>0$.
\end{remark}

The above results are important in terms of modelling aspects, since they allow a precise description of the degrees and number of edges as a function of the number of nodes. They can also be used to conduct inference on the parameters of the statistical network model, since the behaviour of most estimators will depend heavily on the behaviour of $N_\alpha, N_{\alpha}^{(e)} $ and possibly $N_{\alpha,j}$. For instance the following naive estimator\footnote{Following an earlier version of the present paper, \cite{Naulet2017} proposed an alternative estimator for $\sigma$, with better statistical properties.} of $\sigma$
\begin{equation}
\hat \sigma = \frac{ 2 \log N_\alpha}{ \log N_\alpha^{(e)} } - 1\label{eq:estimatorsigma}
\end{equation}
is almost surely consistent. Indeed under Assumptions \ref{assumpt:1} and \ref{assumpt:2},
using Theorems \ref{th:numberedges} and \ref{th:asnumbernodes}, we have almost surely $N_\alpha^2\sim\alpha^{2+2\sigma}\ell_\sigma(\alpha)^2$ and $N_\alpha^{(e)}\sim \alpha^2\overline W/2$. Hence
$$
\log\frac{N_\alpha^2}{N_\alpha^{(e)}}\sim 2\sigma\log(\alpha)+\log\{\ell_\sigma(\alpha)^2 2/\overline W\}
$$
and the result follows as $\log\ell_\sigma(\alpha)/\log\alpha\rightarrow 0$.\medskip

All the above results concern the behaviour of small degree nodes, where the degree $j$ is fixed as the size of the graph goes to infinity. It is also of interest to look at the number of nodes of degree $j$ as both $\alpha$ and  $j$ tend to $\infty$. We show in the next proposition that this is controlled by the behaviour of the function $f$, introduced in Assumption~\ref{assumpt:3}, at 0 or $\infty$.

\begin{proposition}[Power-law for high degree nodes]\label{prop:largedegrees}
Assume that Assumption \ref{assumpt:3}  holds. Then when $j \rightarrow \infty $ and $\log \alpha = o(j)$ and $j/\alpha \rightarrow c_0 \in [0, \infty]$, then
\begin{equation*}
E(N_{\alpha,j}) \sim f(j/\alpha).
 \end{equation*}

\end{proposition}

Note that Proposition \ref{prop:largedegrees} implies that  when $j/\alpha\to\infty$,
$$ E(N_{\alpha,j})\sim\frac{\tau\alpha^{1+\tau}\ell_{2}(j/\alpha)}{j^{1+\tau}}$$
which  corresponds to a power-law behaviour with exponent $1+\tau$ . If $j/\alpha\to 0$ then
$$ E(N_{\alpha,j})\sim\frac{\alpha^{1+\tilde{\sigma}}\tilde{\ell}_{2}(\alpha/j)}{j^{1+\tilde{\sigma}}}.
$$
This is similar to the asymptotic results for $j$ fixed, stated in Theorem \ref{th:meannumbernodes}, noting that $\frac{\Gamma(j-\sigma)}{j!}\sim j^{-1-\sigma}$ as $j\to\infty$. Finally, if  $j/\alpha \rightarrow c_0 \in (0,\infty)$, then $ E(N_{\alpha,j}) \sim f(c_0) \in (0,\infty)$.

\begin{proof}
Under Assumption \ref{assumpt:3}, we have $\mu^{-1}(t)=\int_t^\infty f(x)dx$ with
\[
f(x)\sim \tau x^{-\tau-1}\ell_2(x)\text{ as }x\to\infty, \quad f(x)\sim \tau x^{-\sigma-1}\tilde{\ell}_2(x)\text{ as }x\to 0
\]
From \cite[Theorem 5.5]{Veitch2015} we have, assuming that $W(x,x)=0$ for the sake of simplicity,
\begin{align*}
E(N_{\alpha,j})&=\alpha\int_0^\infty e^{-\alpha\mu
(\vartheta)}\frac{(\alpha\mu(\vartheta))^{j}}{j!}d\vartheta  \\
&=\alpha\int_0^\infty e^{-\alpha x}\frac{(\alpha x)^{j}}{j!}f(x)dx  \\
 &= E[ f((j+1) X_j /\alpha) ],
\end{align*}
where $X_j$ is a gamma random variable with rate $j+1$ and inverse scale $j+1$.
We split the above expectation into $X_j < 1/2, X_j \in [1/2, 3/2], X_j>3/2 $. The idea is that the third and the first expectations are small because $X_j$ concentrates fast to 1, while the middle expectation ($X_j \in [1/2, 3/2]$ ) uses the fact that $f((j+1)X_j/\alpha) \approx f((j+1)/\alpha) $. More precisely, using Stirling's approximation, for all $\epsilon>0$, there exists $c>0$
\begin{align*}
E[ f((j+1) X_j /\alpha) \1{X_j<1/2}] & = \frac{ (j+1)^{j+1} }{\Gamma(j+1)}  \int_0^{1/2} f((j+1)x/\alpha) x^j e^{-(j+1)x}dx \\
&\lesssim \sqrt{j} \int_0^{1/2}\left( 1 + \left(\frac{ (j+1)x}{\alpha}\right)^{-1-\tilde{\sigma} -\epsilon } \right) e^{-j(x-\log x -1) } dx  \\
& \lesssim e^{- c j}  \left( 1 + \left(\frac{ j }{ \alpha }\right)^{-1-\tilde{\sigma } -\epsilon } \right) = o(1)
\end{align*}
since $\alpha/j = o(e^{cj }) $ for any $c>0$.
The expectation over $X_j >3/2$ is treated similarly. We now study the expectation over $[1/2, 3/2]$.
We have that if $j/\alpha \rightarrow \infty $, then uniformly in $x \in [1/2, 3/2]$, under Assumption \ref{assumpt:3},
$$ \left | \frac{ f( (j+1) x/\alpha)  }{  f( (j+1) /\alpha) }  - x^{-1-\tau} \right| = o(1) $$
and similarly when   $j/\alpha \rightarrow 0$, with $\tau $ replaced by $\tilde \sigma$; if  $j/\alpha \rightarrow c_0 \in (0,\infty)$, then uniformly in $x \in [1/2, 3/2]$,  $$ \left | \frac{ f( (j+1) x/\alpha)  }{  f( (j+1) /\alpha) }  - \frac{f(c_0 x)}{f(c_0)} \right| = o(1). $$
Moreover since $X_j $ converges almost surely to 1, we finally obtain that
$$E\left[ \frac{ f((j+1) X_j /\alpha) }{ f((j+1)  /\alpha) } \1{X_j\in [1/2,3/2]} \right] \to 1 $$
which terminates the proof.
\end{proof}

\subsection{Proof of Theorem \ref{th:asnumbernodes}}

\label{sec:app:asnumbernodes}

The proof follows similarly to that of \cite[Theorem 6.1]{Veitch2015}, by bounding the variance. \cite{Veitch2015} showed that $\var(N_{\alpha})=o(E(N_{\alpha})^2)$ and $\var(N_{\alpha,j})=o(E(N_{\alpha,j})^2)$ and use this result to prove that \eqref{eq:nodesexpectationalmostsure} holds in probability; we need a slightly tighter bound on the variances to obtain the almost sure convergence. This is stated in the next two Propositions.
\begin{proposition}
\label{prop:variancenbnodes}
Let $N_\alpha$ be the number of nodes. We have
\begin{align}
\var(N_{\alpha})  &  =E(N_{\alpha})+2\alpha^{2}\int_{\mathbb{R}_{+}
}\mu(x)\{1-W(x,x)\}e^{-\alpha\mu(x)}dx\nonumber\\
&  +\alpha^{2}\int_{\mathbb{R}_{+}^{2}}\{1-W(x,y)\}\{1-W(x,x)\}\{1-W(y,y)\}\nonumber\\
&~~~~~~~~~~\left\{ e^{\alpha \nu(x,y)} -1 +W(x,y) \right\} e^{-\alpha\mu(x)-\alpha\mu(y)}dxdy.\label{eq:variancenbnodes1}
\end{align}
Under Assumptions \ref{assumpt:1} and \ref{assumpt:2}, with $\sigma\in[0,1]$, slowly varying function $\ell$ and positive scalar $a$ satisfying \eqref{eq:conda},  we have
\begin{equation}
\var(N_{\alpha})=O\{\alpha^{3+2\sigma-2a}\ell_\sigma(\alpha)^2\}.\label{eq:variancenbnodes2}
\end{equation}
where the slowly varying functions $\ell_\sigma$ are defined in Equation~\eqref{eq:defellsigma}. Additionally, under Assumptions \ref{assumpt:1} and \ref{assumpt:4}, we have, for any $\sigma\in[0,1]$ and any slowly varying function $\ell$
\begin{equation}
\var(N_{\alpha})\asymp\alpha^{1+2\sigma}\ell_\sigma^{2}(\alpha).\label{eq:variancenbnodes3}
\end{equation}
\end{proposition}
\begin{sketchproof}
We give here the ideas behind the proof, deferring its completion to Section~\ref{sec:proofvarnnodes} of the Supplementary Material~\citep{CaronRousseau2017:supplement}.  Equation \eqref{eq:variancenbnodes1} is immediately obtained using the Slivnyak-Mecke and Campbell theorems. Applying the inequality $e^{x}-1\le xe^x$ and the Lemmas \ref{lemma:abelianvariations} and~\ref{lemma:boundintnu} to the right-hand side of Equation \eqref{eq:variancenbnodes1},  the upper bound of equation \eqref{eq:variancenbnodes2} follows.  Finally, if Assumption \ref{assumpt:4} hold, then Assumption \ref{assumpt:2} holds as well with $a=1$. Together with Assumption \ref{assumpt:1} we can therefore specialise the upper bound of Equation \eqref{eq:variancenbnodes2} to the case $a=1: O(\alpha^{1+2\sigma}\ell^2_\sigma(\alpha))$.  The lower bound with the same order is found using the inequality $e^x-1\ge x$ and Lemmas~\ref{lemma:abelianvariations} and \ref{lemma:abelianvariationsv2}.
 \end{sketchproof}
 \medskip

Proposition \ref{prop:variancenbnodes} and Theorem~\ref{th:meannumbernodes}  imply in particular that, under Assumptions~\ref{assumpt:1} and~\ref{assumpt:2},
$$
\var(N_{\alpha})=O\{E(N_\alpha)^2 \alpha^{-\kappa}\}
$$
for some $\kappa>0$.  $N_\alpha$ is a positive, monotone increasing stochastic process. Using Lemma~\ref{lemma:almostsure} in the Appendix, we obtain that $N_\alpha\sim E(N_\alpha)$ almost surely as $\alpha$ tends to $\infty$.

\begin{proposition}
\label{prop:variancenbnodesj}
Let $N_{\alpha,j}$ be the number of nodes of degree $j$. Then, under  Assumptions \ref{assumpt:1} and \ref{assumpt:2}, with $\sigma\in[0,1]$, slowly varying function $\ell$ and positive scalar $a$ satisfying \eqref{eq:conda},  we have
$$
\var(N_{\alpha,j})=O\{\alpha^{3+2\sigma-2a}\ell_\sigma(\alpha)^2\}.
$$
where the slowly varying functions $\ell_\sigma$ are defined in Equation~\eqref{eq:defellsigma}. In the case $\sigma=0$ and $a=1$, we have the stronger result
$$
\var(N_{\alpha,j})=o\{\alpha\ell(\alpha)^2\}.
$$
\end{proposition}
\begin{sketchproof} While the complete proof of Proposition \ref{prop:variancenbnodesj} is given in Section~\ref{sec:proofvarnnodesj} in the Supplementary Material~\citep{CaronRousseau2017:supplement}, we explain here its main passages.  We start by evaluating the expectation of $N_{\alpha, j}^2$ and $N_{\alpha, j}$ conditional on the unit-rate Poisson random measure $M=\sum_{i}\delta_{(\theta_{i},\vartheta_{i})}$:
\begin{align*}
& E(N_{\alpha,j}^{2}\mid M)-E(N_{\alpha,j}\mid M)\nonumber\\
&=  \sum_{i_{1}\neq i_{2}}\1{\theta_{i_{1}}\leq\alpha}\1{\theta_{i_{2}}\leq\alpha}\Pr\left\{  \sum
_{k}\1{\theta_{k}\leq\alpha}Z_{i_{1}k}=j\text{ and } \sum_{k}\1{\theta_{k}\leq\alpha}Z_{i_{2}%
,k}=j\mid M\right\}\label{eq:bigexpectation}\\
&=\sum_{b\in\{0,1\}^3} \sum_{j_1=0}^j \sum_{i_{1}\neq i_{2}}\1{\theta_{i_{1}}\leq\alpha}\1{\theta_{i_{2}}\leq\alpha}\\
&\quad\times\Pr\left\{  \sum
_{k}\1{\theta_{k}\leq\alpha}Z_{i_{1}k}=j\text{ and } \sum_{k}\1{\theta_{k}\leq\alpha}Z_{i_{2}%
,k}=j\text{ and } \sum
_{k}\1{\theta_{k}\leq\alpha}Z_{i_{1}k}Z_{i_{2}k}=j-j_1\right .\\
&\quad\quad\quad\quad \left .\text{ and }Z_{i_1i_1}=b_{11},Z_{i_1i_2}=b_{12},Z_{i_2i_2}=b_{22} \mid M\right \}
\end{align*}
where $b=(b_{11},b_{12},b_{22})\in\{0,1\}^3$.  We then use the Slivnyak-Mecke theorem to obtain $E(N_{\alpha,j}^{2})-E(N_{\alpha,j})$, which can be bounded by a sum of terms of the form
\begin{equation}
\alpha^2\int_{\R^2} [\alpha\mu(x)]^{k_1}[\alpha\mu(y)]^{k_2} (\alpha \nu(x,y))^{r} e^{ - \alpha \mu(x) -\alpha \mu(y) +\alpha \nu(x,y)}dxdy\label{eq:bound_Naj}
\end{equation}
for $k_1,k_2,r\in\{0,\ldots,j\}$. 
For terms with $r\geq1$, we use Lemma \ref{lemma:Ir} (enunciated and proved, using Lemmas \ref{lemma:abelianvariations} and  \ref{lemma:abelianvariationsv3}, in Section~\ref{sec:proofvarnnodesj} of the Supplementary Material). The Lemma states that, under Assumptions \ref{assumpt:1} and \ref{assumpt:2}, the integral in \eqref{eq:bound_Naj} is in $O( \alpha^{r-2ar+2\sigma } \ell_\sigma^2(\alpha)))=O( \alpha^{1-2a+2\sigma } \ell_\sigma^2(\alpha)))$ for any $r\ge 1, k_1,k_2\ge 0$.
For terms with $r=0$ in \eqref{eq:bound_Naj}, we use the inequality $e^{x}\leq 1+xe^x$, Cauchy-Schwarz inequality and Lemma \ref{lemma:abelianvariationsv3} to show that these terms are in $O\{\alpha^{3+2\sigma-2a}\ell_\sigma^2(\alpha)\}$, which completes the proof.
\end{sketchproof}
\\

Define $\widetilde{N}_{\alpha,j}    =\sum_{k\geq j}N_{\alpha,k}$,
the number of nodes of degree at least $j$. Note that $\widetilde{N}%
_{\alpha,j}$ is a positive, monotone increasing stochastic process in $\alpha$, with
$ \widetilde{N}_{\alpha,j}  = N_\alpha  - \sum_{k = 1}^{j-1} N_{\alpha,k}$.
We then have that, using Cauchy-Schwarz and Jensen's inequalities
$$ E(\widetilde{N}_{\alpha,j}) = E(N_\alpha) - \sum_{k = 1}^{j-1} E( N_{\alpha,k}), \quad  \var(\widetilde{N}_{\alpha,j}) \leq j \left\{ \var(N_\alpha) + \sum_{k = 1}^{j-1} \var( N_{\alpha,k})\right\}.$$
Consider first the case $\sigma\in[0,1)$. Since Theorem~\ref{th:meannumbernodes} implies, for $j\geq 2$,
$
\alpha^{1+\sigma}\ell(\alpha) \lesssim E(\widetilde{N}_{\alpha,j})
$
as $\alpha$ goes to infinity, using Propositions \ref{prop:variancenbnodes} and \ref{prop:variancenbnodesj}, we obtain
$
\var(\widetilde N_{\alpha,j})  =O\{\alpha^{-\tau} E(\widetilde N_{\alpha,j})^2  \}
$ for some $\tau>0$. Combined with  Lemma~\ref{lemma:almostsure}, it leads to $\widetilde N_{\alpha,j}\sim E(\widetilde N_{\alpha,j})$
almost surely as $\alpha$ goes to infinity.

The almost sure results for $N_{\alpha,j}$ then follow from the fact that, for all $j\geq 2$, $E(\widetilde N_{\alpha,j})\asymp E(N_\alpha)$ if $\sigma\in(0,1)$, $E(\widetilde N_{\alpha,j})\sim E(N_\alpha)$ if $\sigma=0$ and  $E(\widetilde N_{\alpha,j})=o\{E(N_\alpha)\}$ if $\sigma=1$.

\subsection{Asymptotic behaviour of the clustering coefficients}

The following Proposition is a direct corollary of \citep[Proposition 56]{Borgs2018} who showed the almost sure convergence of subgraph counts in graphex processes.

\begin{proposition}[Global clustering coefficient \citep{Borgs2018}]\label{prop:globalclustering}
Assume $\int_{0}^{\infty}\mu(x)^{2}dx<\infty$. Recall that $T_\alpha$ and $A_\alpha$ are respectively the number of triangles and number of adjacent edges in the graph of size $\alpha$.  We have%
\begin{align*}
T_{\alpha}  &  \sim E(T_{\alpha})=\frac{\alpha^{3}}{6}\int_{\mathbb{R}_{+}%
^{3}}W(x,y)W(x,z)W(y,z)dxdydz,\\
A_{\alpha}  &  \sim E(A_{\alpha})=\frac{\alpha^{3}}{2}\int_{0}^{\infty}%
\mu(x)^{2}dx
\end{align*}
almost surely as $\alpha\rightarrow\infty$. Therefore, if $\int_{0}^{\infty
}\mu(x)^{2}dx>0$, the global clustering coefficient defined in Equation \eqref{eq:globalclusteringdef} converges to a constant%
\[
C_{\alpha}^{(g)}\rightarrow\frac{\int_{\mathbb{R}_{+}^{3}}%
W(x,y)W(x,z)W(y,z)dxdydz}{\int_{0}^{\infty}\mu(x)^{2}dx}\text{ almost surely
as }\alpha\rightarrow\infty.
\]
\end{proposition}

Note that if $\mu$ is monotone decreasing, as $\overline{W}<\infty$, we necessarily have $\int_{a}^{\infty}\mu(x)^{2}dx<\infty$ for
any $a>0$. Hence the condition $\int_{0}^{\infty}\mu(x)^{2}dx<\infty$ in Proposition \ref{prop:globalclustering} requires
additional assumptions on the behaviour of $\mu$ at 0 (or equivalently the
behaviour of $\mu^{-1}$ at $\infty$), which drives the behaviour of large degree nodes. If the graph is dense, $\mu$ is bounded and thus $\int_{0}^{\infty}\mu(x)^{2}dx<\infty$.

\begin{proposition}[Local clustering coefficient]\label{prop:localclustering}
Assume Assumptions \ref{assumpt:1} and \ref{assumpt:2} hold with $\sigma\in(0,1)$. Assume additionally that%
\begin{equation} \label{cond:localcluster}
\lim_{x\rightarrow\infty}\frac{\int_{\mathbb{R}_{+}^{2}}%
W(x,y)W(x,z)W(y,z)dydz}{\mu(x)^{2}}\rightarrow b
\end{equation}
for some $b\in[0,1]$. Then the local clustering coefficients converge in probabiltiy  as $\alpha\rightarrow\infty$: 
\begin{align*}
C_{\alpha,j}^{(\ell)}  &  \rightarrow b \quad \forall j\geq2.
\end{align*}
If $b>0$, the above result holds almost surely, and the average local clustering coefficient satisfies
\begin{align*}
\lim_{\alpha \rightarrow \infty} \overline{C}_{\alpha}^{(\ell)}  &  \rightarrow b, \quad \text{almost surely} .
\end{align*}

\end{proposition}

In general,
\[
\lim_{x\rightarrow\infty}\frac{1}{\mu(x)^{2}}\int W(x,y)W(x,z)W(y,z)dydz\neq
\frac{\int W(x,y)W(x,z)W(y,z)dxdydz}{\int\mu(x)^{2}dx}%
\]
and the global clustering and local clustering coefficients converge to different limits. A
notable exception is the separable case where $W(x,y)=\mu(x)\mu(y)/\overline{W}$, since in this case
$$ \int W(x,y)W(x,z)W(y,z)dydz = \overline{W}^{-3}\mu(x)^2\left(\int \mu(y)^2dy\right)^2 , \quad b = \frac{ \left(\int \mu(y)^2dy\right)^2 }{\overline{W}^{3} } $$
and
$$ \int W(x,y)W(x,z)W(y,z)dydzdx = \overline{W}^{-3}\left(\int \mu(y)^2dy\right)^3 .$$

\textit{Sketch of the proof.} Full details are given in Appendix \ref{sec:proofclustering}, and we only give here a sketch of the proof, which is similar to that of Theorem~\ref{th:asnumbernodes}.
We have
$$C_{\alpha,j}^{(\ell)}  = \frac{ 2 R_{\alpha,j} }{ j(j-1) N_{\alpha,j} }, \quad \text{where} \quad  R_{\alpha,j} = \sum_i T_{\alpha,i} \1{D_{\alpha,i}=j} $$
$R_{\alpha,j}$ corresponds to the number of triangles having a node of degree $j$ as a vertex, where triangles
having $k\leq 3$ degree-$j$ nodes as vertices are counted $k$ times.

We obtain an asymptotic expression for $E(R_{\alpha,j})$, and show that $\var( R_{\alpha,j} ) = O(\alpha^{1 -2a} [E(R_{\alpha,j})]^2 )$. We then prove that $R_{\alpha,j}/E(R_{\alpha,j}) $ goes to 1 almost surely. The latter is obtained by proving that $ R_{\alpha,j}$ is nearly monotonic increasing by constructing an increasing  sequence $\alpha_n $ going to infinity such that $E(R_{\alpha_n,j})/E(R_{\alpha_{n+1},j})$ goes to 1 and such that for all $\alpha \in (\alpha_n, \alpha_{n+1})$
$$ R_{\alpha_n,j} - \tilde R_{n,j} \leq R_{\alpha,j}\leq R_{\alpha_{n+1},j} + \tilde R_{n,j}, \quad \tilde R_{n,j}= o(E(R_{\alpha_n,j} )).$$
Roughly speaking $\tilde R_{n,j}$ corresponds to the  sum of the number of triangles from $i$, over the set $i$ such that $D_{n,i}\leq j$ and $i$ has at least one connection with some $i'$ such that $\theta_{i'} \in (\alpha_n, \alpha_{n+1})$. The result for the local clustering coefficient then follows from Toeplitz's lemma (see e.g. \cite[p. 250]{Loeve1977}).

\section{Central limit theorems}\label{sec:clt}

We now present central limit theorems (CLT) for subgraph counts (number of edges, triangles, etc.) and for the number of nodes $N_\alpha$. Subgraph counts can be expressed as $U$-statistics of Poisson random measures (up to an asymptotically negligible term). A CLT then follows rather directly from CLT on $U$-statistics of Poisson random measures~\citep{Reitzner2013}.

Obtaining a CLT for quantities like $N_\alpha$ is more challenging, since these cannot be reduced to $U$-statistics. We prove in this Section the CLT for $N_\alpha$ and we separate the dense and sparse cases because the techniques of the respective proofs are very different. The proof of the sparse case requires additional assumptions and is much more involved.  We believe that the same technique of proof can be used for other quantities of interest, such as the number $N_{\alpha,j}$ of nodes of degree $j$ with more tedious computations.

\subsection{CLT for subgraph counts}

\subsubsection{Statement of the result}
Let $F$ be a given subgraph, which has neither isolated vertices nor loops.  Denote $|F|$ the number of  nodes, $\{1, \cdots, |F|\}$ the set of vertices and $e(F)$ the set of edges.
Let $N^{(F)}_{\alpha}$ be the number of subgraphs $F$ in the graph $\mathcal{G}_\alpha$:

$$ N^{(F)}_\alpha =k_{(F)}\sum_{(v_1,\cdots, v_{|F|})}^{\neq} \prod_{(i,j) \in e(F)} Z_{v_{i},v_{j}} \1{\theta_{v_{i}}\le\alpha}\1{\theta_{v_{j}}\le\alpha},$$
where $k_{(F)}$ is a constant accounting for the multiple counts of $F$, that we can omit in the rest of the discussion since it does not depend on $\alpha$.
Note that this statistics covers the number of edges (excluding loops) if $|F|=2$ and the number of triangles if $|F|=3$ and $e(F)=\{(1,2),(1,3),(2,3)\}$. It is known in the graph literature as the number of injective adjacency maps from the vertex set of $F$ to the vertex set of $\mathcal{G}_\alpha$, see \cite[Section 2.5]{Borgs2018}.

\begin{proposition}\label{prop:CLT_subgraphs}
Let $F$ be a subgraph without loops nor isolated vertices. Assume that $\int_0^\infty \mu(x)^{2|F|-2}dx<\infty$. Then
\begin{equation}\label{th:CLT:countsk}
\frac{ N^{(F)}_{\alpha} - E\left (N^{(F)}_{\alpha}\right) }{ \sqrt{\var\left (N^{(F)}_{\alpha}\right) }} \to \mathcal N(0,1),
\end{equation}
as $\alpha$ goes to infinity, where
\begin{align}
E(N^{(F)}_{\alpha})& = k_{(F)}\alpha^{|F|}  \int_{\mathbb R_+^{|F|}} \prod_{(i,j)\in e(F)} W(x_i,x_j)dx_1\cdots dx_{|F|}<\infty
\end{align}
and
$$\var(N^{(F)}_{\alpha})\sim c_F \alpha^{2|F|-1} $$
for some positive constant $c_F$ that depends only on $F$.
\end{proposition}

\begin{remark}
If the graph is dense, $\mu$ is a bounded function with bounded support and therefore $\int_0^\infty \mu(x)^{p}dx<\infty$ for any $p$. In the sparse case, if $\mu$ is monotone, we necessarily have $\int_a^\infty \mu(x)^p dx<\infty$ for any $p>1$. The condition $\int_0^\infty \mu(x)^{2|F|-2}dx<\infty$ therefore requires additional assumptions on the behaviour of $\mu$ at 0, which drives the behaviour of large degree nodes.
\end{remark}

\subsubsection{Proof}

Recall that $M=\sum_{i}\delta_{(\theta_{i},\vartheta_{i})}$. The main idea of the proof is to use the decomposition
\begin{equation}\label{eq:decompositionCLTNF}
    N^{(F)}_{\alpha} - E(N^{(F)}_{\alpha})  = E(N^{(F)}_{\alpha}|M) - E(N^{(F)}_{\alpha}) + N^{(F)}_{\alpha} - E(N^{(F)}_{\alpha}|M),
    \end{equation}
and to show that $E(N^{(F)}_{\alpha}|M)$ is a geometric $U$-Statistic of a Poisson process, for which CLT have been derived by \cite{Reitzner2013}.

In this section, denote $K=|F|\geq 2$ the number of nodes of the subgraph $F$. The subgraph counts are
\begin{align*}
N^{(F)}_\alpha &=k_{(F)}\sum_{(v_1,\cdots, v_K)}^{\neq} \left ( \prod_{k=1}^K \1{\theta_{v_{k}}\leq \alpha} \right ) \frac{1}{|\mathbb S_K|}\sum_{\pi\in \mathbb S_K} \prod_{(i,j) \in e(F)} Z_{v_{\pi_i},v_{\pi_j}}
\end{align*}
where $\mathbb S_K$ denotes the set of permutations of $\{1,\ldots, K\}$.

Using the extended Slivnyak-Mecke theorem, we have
\begin{align}\label{eq:expectationNF}
E(N^{(F)}_{\alpha})& = k_{(F)}\alpha^K  \int_{\mathbb R_+^K} \prod_{(i,j)\in e(F)} W(x_i,x_j)dx_1\cdots dx_K.
\end{align}
As $\int_0^\infty \mu(x)^{K-1}dx<\infty$, Lemma 62 in \citep{Borgs2018} implies that $E(N^{(F)}_{\alpha})<\infty$. For any $K\geq 2$, define the symmetric function
\begin{equation*}
f(x_1,\ldots,x_K)=\frac{1}{|\mathbb S_K|} \sum_{\pi\in \mathbb S_K} \prod_{(i,j)\in e(F)} W(x_{\pi_i},x_{\pi_j});
\end{equation*}
additionally, using condition~\eqref{eq:integrabilityW} and $\int_0^\infty \mu(x)^{K-1}dx<\infty$, it satisfies $0 <\int_{\mathbb R_+^{K}} f(x_1,\ldots,x_K)dx_1\ldots dx_K<\infty.$

We state the following useful lemma.
\begin{lemma}\label{lemma:functionfsubgraphs}
The function $f$ satisfies for all $x_K\geq 0$
$$
g(x_K):=\int_{\mathbb R_+^{K-1}} f(x_1,\ldots,x_{K-1},x_K)dx_1\ldots dx_{K-1}\leq C_0\max(\mu(x_K),\mu(x_K)^{K-1})
$$
for some constant $C_0$.
\end{lemma}
\begin{proof}
Let $\pi\in\mathbb S_K$ and $r_K\in \{1,\ldots,K\}$ be such that $\pi_{r_K}=K$. Denote $S\subseteq \{1,\ldots,K-1\}$ the set of indices $i$ such that $(i,r_K)\in e(F)$ and $i$ has no other connections in $F$. Then
\begin{align*}
\int_{\mathbb R_+^{K-1}}&  \prod_{(i,j)\in e(F)} W(x_{\pi_i},x_{\pi_j}) dx_1\ldots dx_{K-1}\leq C_1 \int_{\mathbb R_+^{|S|}} \left [\prod_{i\in S} W(x_{\pi_i},x_K)dx_i\right ]\\
&= C_1 \mu(x_K)^{|S|} \leq C_1 \max(\mu(x_K),\mu(x_K)^{K-1})
\end{align*}
for some constant $C_1$.
\end{proof}

It follows from Lemma~\ref{lemma:functionfsubgraphs} and from the fact that $\int_0^\infty\mu(x)dx<\infty$ that, if $\int_0^\infty \mu(x)^{2K-2} dx<\infty$, then
\begin{align*}
\int_0^\infty \left (\int_{\mathbb R_+^{K-1}} f(x_1,\ldots,x_{K-1},y)dx_1\ldots dx_{K-1} \right )^2dy<\infty.
\end{align*}

We are now ready to derive the asymptotic expression for the variance of $N^{(F)}_\alpha$. Using the extended Slivnyak-Mecke theorem again,
\bgroup
\allowdisplaybreaks
\begin{align*}
&E((N^{(F)}_\alpha)^2)=E(E((N^{(F)}_\alpha)^2 \mid M))\\
&=k_{(F)}^2 E\left(\sum_{(v_1,\cdots, v_K,v'_1,\ldots,v_K')}^{\neq} f(\vartheta_{v_1},\ldots,\vartheta_{v_K})f(\vartheta_{v'_1},\ldots,\vartheta_{v'_K})\prod_{k=1}^{K}\1{\theta_{v_k}\leq \alpha}\1{\theta_{v'_k}\leq \alpha}  \right )\\
&+ k_{(F)}^2 K^2 E\left(\sum_{\substack{(v_1,\cdots, v_K, \\v'_1,\ldots,v_{K-1}')} }^{\neq} f(\vartheta_{v_1},\ldots,\vartheta_{v_K})f(\vartheta_{v'_1},\ldots,\vartheta_{v'_{K-1}},\vartheta_{v_{K}} )\1{\theta_{v_{K}}\leq\alpha}\prod_{k=1}^{K-1}\1{\theta_{v_k}\leq \alpha}\1{\theta_{v'_k}\leq \alpha}  \right )\\
&+O(\alpha^{2K-2})\\
&= k_{(F)}^2 K^2 \alpha^{2K-1}\int_{\mathbb R_+^{2K-1}} f(x_1,\ldots,x_K)f(x'_1,\ldots,x'_{K-1},x_{K})dx_1,\ldots dx_K dx'_1\ldots dx'_{K-1} \\
&+E(N^{(F)}_\alpha)^2 +O(\alpha^{2K-2}).
\end{align*}
\egroup
It follows that
\begin{align*}
\var(N^{(F)}_\alpha)\sim k_{(F)}^2 K^2  \alpha^{2K-1} \sigma_F^2
\end{align*}
as $\alpha$ tends to infinity, where
$$
\sigma_F^2=\int_0^\infty \left(\int_{\mathbb R_+^{K-1}} f(x_1,\ldots,x_{K-1},y)dx_1\ldots dx_{K-1} \right )^2 dy<\infty.
$$

We now prove the CLT.
The first term of the right-handside of Equation~\eqref{eq:decompositionCLTNF} takes the form
\begin{equation}\label{exp_N^F_cond}
E(N^{(F)}_{\alpha}|M)  = k_{(F)}  \sum_{(v_1,\cdots, v_K)}^{\neq} f(\vartheta_{v_1},\ldots,\vartheta_{v_K})\prod_{i=1}^K \1{\theta_{v_i}\leq \alpha}.
\end{equation}
By the superposition property of Poisson random measures, we have
$$
E(N^{(F)}_{\alpha}|M)  \overset{d}{=}k_{(F)}  \sum_{(v_1,\cdots, v_K)}^{\neq} f(\widetilde\vartheta_{v_1},\ldots,\widetilde\vartheta_{v_K})\prod_{i=1}^K \1{\widetilde\theta_{v_i}\leq 1}
$$

where the right-handside is a geometric $U$-statistic \cite[Definition 5.1]{Reitzner2013} of the Poisson point process $\{(\widetilde \theta_i,\widetilde \vartheta_i)_{i\geq 1}\}$ with mean measure $\alpha d\widetilde \theta d\widetilde\vartheta$ on $[0,1]\times\mathbb R_+$. Theorem 5.2 in \cite{Reitzner2013} therefore implies that
\begin{align}\label{CLTconditional}
\frac{E(N_{\alpha}^{(F)}\mid M)-E(N_\alpha^{(F)})}{\sqrt{\var(E(N_{\alpha}^{(F)} \mid M))}}\to\mathcal N(0,1)
\end{align}
where $\var(E(N_{\alpha}^{(F)} \mid M))\sim \var(N_{\alpha}^{(F)})\sim k_{(F)}^2 |F|^2  \alpha^{2|F|-1} \sigma_F^2$.
One can show similarly (proof omitted) that $\var(N_{\alpha}^{(F)}-E(N_{\alpha}^{(F)}\mid M))=o(\alpha^{2|F|-1})$. It follows from Equations~\eqref{eq:decompositionCLTNF}, \eqref{CLTconditional} and Chebyshev inequality that
$$
\frac{N_{\alpha}^{(F)}-E(N_{\alpha}^{(F)})}{\sqrt{\var(N_{\alpha}^{(F)})}}\to \mathcal N(0,1)
$$
as $\alpha$ tends to infinity.

\subsection{CLT for $N_\alpha$ (dense case)}

\subsubsection{Statement of the result}

In the dense case, $\mu$ has a bounded support. If it is monotone decreasing, then Assumption 1 is satisfied with $\sigma=0$ and $\ell(t)=\sup\{x>0\mid\mu(x)>0\}$ is constant. In this case a central limit theorem (CLT) applies, as described in the following theorem.

\begin{theorem}[Dense case]
\label{th:cltdense1} Assume
 that Assumption 1 holds with $\sigma=0$ and $\ell
(t)= C\in(0,\infty)$ where $C=\sup\{x>0\mid\mu(x)>0\}$ (dense case). Also assume Assumption 2 holds with $a=1$. Then
\begin{equation}
\frac{N_{\alpha}-E(N_\alpha)}{\sqrt{\alpha C} } \rightarrow\mathcal{N}(0,1).
\end{equation}
Moreover, $E(N_\alpha) = \alpha C -m_{\alpha,0}$ where
\begin{equation}\label{eq:malpha0}
m_{\alpha,0}=\alpha\int_{0}^{C}e^{-\alpha\mu(x)}(1-W(x,x)) dx=o(\alpha).
\end{equation}
\end{theorem}

$m_{\alpha,0}$ can be interpreted as the expected number of degree 0 nodes, and is finite in the dense case. $m_{\alpha,0}$ can either diverge or converge to a constant as $\alpha$ tends to infinity, as shown in the following examples.

\begin{example}
Consider $\mu(x)=\1{x\in\lbrack0,1]}$,
$\mu(x)=(1-x)^{2}\1{x\in\lbrack0,1]}$ and $\mu(x)=(1-x)^{3}\1{x\in\lbrack
0,1]}$. We respectively have $m_{\alpha,0}\rightarrow0$, $m_{\alpha,0}%
\sim\frac{\sqrt{\pi}}{2}\alpha^{1/2}$ and $m_{\alpha,0}\sim\Gamma
(4/3)\alpha^{2/3}$.
\end{example}

The above CLT for $N_\alpha$ can be generalised to $\widetilde N_{\alpha,j}=\sum_{k\geq j} N_{\alpha,k}$, the number of nodes of degree at least $j$.

\begin{theorem}
\label{th:cltdense2} Assume
 that Assumption 1 holds with $\sigma=0$ and $\ell
(t)= C\in(0,\infty)$ where $C=\sup\{x>0\mid\mu(x)>0\}$ (dense case). Also assume Assumption 2 holds with $a=1$. Then, for any $j\geq 1$
\begin{equation}
\frac{\widetilde N_{\alpha,j}-E(\widetilde N_{\alpha,j})}{\sqrt{\alpha C} } \rightarrow\mathcal{N}(0,1).
\end{equation}
Moreover, $E(\widetilde N_{\alpha,1})=E(N_\alpha)=\alpha C-m_{\alpha,0}$ and for $j\geq 2$, $E(\widetilde N_{\alpha,j}) = \alpha C -m_{\alpha,0}-\sum_{k=1}^{j-1}E(N_{\alpha,j})$ where $m_{\alpha,0}$ is defined in Equation~\eqref{eq:malpha0} and $E(N_{\alpha,j})$ is defined in Equation \eqref{eq:ExpNalphaj}. Note that $m_{\alpha,0}=o(\alpha)$ and for any $j\geq 1$, $E(N_{\alpha,j})=o(\alpha)$.
\end{theorem}

\subsubsection{Proof}

For a point ($\theta,\vartheta$) such that $\vartheta>C$, its degree is
necessarily equal to zero, as $\mu(\vartheta)=0$. Write%
\[
N_{\alpha}=Q_{\alpha}-N_{\alpha,0}, \quad \text{where} \quad Q_{\alpha}=\sum_{i}\1{\theta_{i}\leq\alpha}\1{\vartheta_{i}\leq C};
\]
$Q_{\alpha}$ is the total number of nodes $i$ with $\theta_i\leq \alpha$ that could have a
connection (hence such that $\mu(\vartheta_i)>0$), and
\[
N_{\alpha,0}=\sum_{i}\1{\theta_{i}\leq\alpha}\1{\vartheta_{i}\leq
C}\1{D_{\alpha,i}=0}%
\]
is the set of nodes $i$ with degree 0, but for which
$\theta_i\leq\alpha,\mu(\vartheta_i)>0$. In the dense regime, both $Q_{\alpha}$ and $N_{\alpha,0}$ are
almost surely finite. $(Q_{\alpha})_{\alpha\geq 0}$ is a homogeneous Poisson process with rate $C$. By
the law of large numbers, $Q_{\alpha}\sim\alpha C\sim N_{\alpha}$ almost surely as
$\alpha$ tends to infinity.
Using Campbell's theorem, the Slivnyak-Mecke formula, and monotone convergence, we have
$
E(N_{\alpha,0})=\alpha\int_{0}^{C}(1-W(x,x))e^{-\alpha\mu(x)}dx=o(\alpha).
$
We also have that
\begin{align*}
E(N_{\alpha,0}^{2})-E(N_{\alpha,0})=\alpha^{2}\int_{0}^{C}%
\int_{0}^{C}&(1-W(x,x))(1-W(y,y))(1-W(x,y))\\
& \times e^{-\alpha\mu(x)-\alpha\mu
(y)+\alpha\nu(x,y)}dxdy.
\end{align*}
Hence, using the inequality $e^{x}-1\leq xe^{x}$, we obtain
\begin{align*}
\var(N_{\alpha,0})  &  =\alpha^{2}\int_{0}^{C}%
\int_{0}^{C}(1-W(x,x))(1-W(y,y))(1-W(x,y))e^{-\alpha\mu(x)-\alpha\mu
(y)+\alpha\nu(x,y)}dxdy\\
& \quad -\alpha^{2}\left(  \int_{0}^{C}(1-W(x,x))e^{-\alpha\mu(x)}dx\right)  ^{2}+E(N_{\alpha,0})\\
&  \leq E(N_{\alpha,0})+\alpha^{3}\int_{0}%
^{C}\int_{0}^{C}\nu(x,y)e^{-\alpha\mu(x)-\alpha\mu(y)+\alpha\nu(x,y)}dxdy.
\end{align*}

Using Lemma \ref{lemma:boundintnu} in the Appendix and Assumption \ref{assumpt:2} with $a=1$,
$$\int_{0}^{C}\int%
_{0}^{C}\nu(x,y)e^{-\alpha/2\mu(x)-\alpha/2\mu(y)}dxdy=o(\alpha^{-2}).$$
It follows that $\var(N_{\alpha,0})=o(\alpha)$. This implies, using Chebyshev's inequality, the CLT for Poisson processes and  Slutsky's theorem that
\begin{equation*}
\frac{ N_\alpha - E(N_\alpha) }{ \sqrt{\alpha C} } =\frac{  Q_\alpha - \alpha C }{ \sqrt{\alpha C} } - \frac{  N_{\alpha,0} - E(N_{\alpha,0}) }{ \sqrt{\alpha C} } \rightarrow \mathcal N(0,1).
\end{equation*}

This concludes the proof of Theorem \ref{th:cltdense1}. The proof of Theorem \ref{th:cltdense2} follows similarly. Note that the case $j=1$ in Theorem \ref{th:cltdense2} corresponds to Theorem \ref{th:cltdense1}.
For any $j\geq 2$,
$
\widetilde N_{\alpha,j}=Q_{\alpha}-N_{\alpha,0}-\sum_{k=1}^{j-1} N_{\alpha,k}.
$
We have, using Cauchy-Schwarz inequality and Proposition \ref{prop:variancenbnodesj},
$$
\var\left (N_{\alpha,0}+\sum_{k=1}^{j-1} N_{\alpha,k}\right)\leq j\left (\var(N_{\alpha,0})+\sum_{k=1}^{j-1} \var(N_{\alpha,k})\right )=o(\alpha)
$$
This implies
\begin{equation*}
\frac{ \widetilde N_{\alpha,j} - E(\widetilde N_{\alpha,j}) }{ \sqrt{\alpha C} } =\frac{  Q_\alpha - \alpha C }{ \sqrt{\alpha C} } -\frac{N_{\alpha,0}+\sum_{k=1}^{j-1} N_{\alpha,k}-E(N_{\alpha,0}+\sum_{k=1}^{j-1} N_{\alpha,k})}{\sqrt{\alpha C}} \rightarrow \mathcal N(0,1).
\end{equation*}

\subsection{CLT for $N_\alpha$ (sparse case)}\label{subsec:CLT_sparse}

\subsubsection{Statement of the result}
We now assume that we are in the sparse regime, that is $\mu$ has unbounded support. We make the following
additional assumption in order to prove the asymptotic normality. This holds when $W$ is separable, as well as in the model of \cite{Caron2017} under some moment conditions (see Section \ref{ex:caronfox}).

\begin{assumption}\label{assumpt:5}
Assume that for any $j\leq 6$, and any $(x_1,\ldots,x_j)\in\mathbb R_+^j$
$$\int_0^\infty \prod_{i=1}^j W(x_i,y)dy \leq \prod_{i=1}^j L(x_i)\mu(x_i)$$
where $L$ is a locally integrable, slowly varying function converging to a (strictly positive) constant, and such that
$$
\int_0^\infty L(x)\mu(x)dx<\infty.
$$
\end{assumption}

We now state the central limit theorem for $N_\alpha$ under the sparse regime. Recall that in this case, when Assumption 1 holds, we either have $\sigma=0$ and $\ell(t)\to\infty$ or $\sigma\in(0,1]$.
\begin{theorem}[Sparse case]\label{th:cltsparse}
Assume that $\mu$ has an unbounded support (sparse regime). Under Assumptions \ref{assumpt:1}, \ref{assumpt:4} and \ref{assumpt:5}, we have
\[
\frac{N_{\alpha}-E(N_{\alpha})}{\sqrt{\var(N_{\alpha})}}%
\rightarrow\mathcal{N}(0,1).
\]

\end{theorem}

\begin{remark}
\label{prop:variance} As detailed in Proposition~\ref{prop:variancenbnodes}, under Assumptions \ref{assumpt:1} and \ref{assumpt:4}, we have, for any $\sigma\in[0,1]$ and any slowly varying function $\ell$,
$
\var(N_{\alpha})\asymp\alpha^{1+2\sigma}\ell_\sigma^{2}(\alpha)
$
where the slowly varying function $\ell_\sigma$ is defined in Equation~\eqref{eq:defellsigma}.
\end{remark}

\subsubsection{Proof}
The proof uses the recent results of \citet{Last2016} on normal approximations of non-linear functions of a Poisson random measure. We have the decomposition%
\begin{align*}
N_{\alpha}-E(N_{\alpha})  &  =(N_{\alpha}-E(N_{\alpha}\mid
M))+(E(N_{\alpha}\mid M)-E(N_{\alpha}))\\
&  =(N_{\alpha}-E(N_{\alpha}\mid M))+(M(h_{\alpha})-E(N_{\alpha}))+f_{\alpha}(M)
\end{align*}

where
\[
f_{\alpha}(M)=\sum_{i}\1{\theta_{i}\leq\alpha}\left[  (1-W(\vartheta
_{i},\vartheta_{i}))e^{-\alpha\mu(\vartheta_{i})}-e^{-M(g_{\alpha,\vartheta_{i}}%
)}\right]
\]
is a nonlinear functional of the Poisson random measure $M$, and
\[
M(h_{\alpha})=\sum_{i}\1{\theta_{i}\leq\alpha}\left[  1-(1-W(\vartheta
_{i},\vartheta_{i}))e^{-\alpha\mu(\vartheta_{i})}\right]
\]
is a linear functional of $M$ with $h_{\alpha}(\theta,\vartheta)=\1{\theta
\leq\alpha}\left[  1-(1-W(\vartheta,\vartheta))e^{-\alpha\mu(\vartheta)}\right]  $. Theorem \ref{th:cltsparse} is a direct consequence of the following three propositions  and of Slutsky's theorem.

\begin{proposition}
\label{prop:normpart1}Under Assumptions \ref{assumpt:1} and \ref{assumpt:4}, we have
\[
N_{\alpha}-E(N_{\alpha}\mid M)=\left \{
\begin{array}{ll}
  O(\alpha^{1/2+\sigma/2}\ell^{1/2}_\sigma(\alpha)) & \text{if }\sigma\in[0,1)\\
  o(\alpha^{1/2}\ell^{1/2}(\alpha)) & \text{if }\sigma=0
\end{array}\right.\text{ in probability}
\]
hence
\[
\frac{N_{\alpha}-E(N_{\alpha}\mid M)}{\sqrt{\var(N_{\alpha})}%
}\rightarrow0\text{ in probability}.
\]

\end{proposition}

\begin{proposition}
\label{prop:normpart2}Under Assumptions \ref{assumpt:1} and \ref{assumpt:4}, we have
\[
M(h_{\alpha})-E(N_{\alpha})=O(\alpha^{1/2+\sigma/2}\ell
^{1/2}(\alpha))\text{ in probability}
\]
hence, if $\mu$ has an unbounded support,
\[
\frac{M(h_{\alpha})-E(N_{\alpha})}{\sqrt{\var(N_{\alpha})}}%
\rightarrow0\text{ in probability}.
\]

\end{proposition}

The above two propositions are proved in Section \ref{sec:proofsecondCLT} of the Supplementary Material~\citep{CaronRousseau2017:supplement}. \\

\begin{proposition}
\label{prop:normpart3}Assume $\mu$ has an unbounded support. Under Assumptions \ref{assumpt:1}, \ref{assumpt:4} and \ref{assumpt:5}, we have
\[
\frac{f_{\alpha}(M)}{\sqrt{\var(N_{\alpha})}}\rightarrow\mathcal{N}(0,1).
\]

\end{proposition}

\begin{sketchproof} To prove Proposition \ref{prop:normpart3} we resort to \cite[Theorem 1.1]{Last2016} on the normal approximation of non-linear functionals of Poisson random measures.  Define
\begin{equation}
F_\alpha=\frac{f_{\alpha}(M)}{\sqrt{v_{\alpha}}}
\end{equation} where $v_{\alpha}= \var(f_{\alpha}(M))\sim \var(N_{\alpha} )\asymp\alpha^{1+2\sigma}\ell_\sigma^{2}(\alpha)$. Note that $E(F_\alpha)=0$ and $\var(F_\alpha)=1$. Consider the difference operator $D_{z}F_\alpha$ defined by%
\[
D_{z}F_\alpha=\frac{1}{\sqrt{v_\alpha}}( f_\alpha(M+\delta_{z})-f_\alpha(M)),
\]
and also%
\begin{align*}
D_{z_1,z_2}^{2}F_\alpha  &  =D_{z_2}(D_{z_1}F_\alpha)=D_{z_2}\left(\frac{1}{\sqrt{v_\alpha}} (f_\alpha(M+\delta_{z_1})-f_\alpha(M))\right )\\
&  =\frac{1}{\sqrt{v_\alpha}}\left( f_\alpha(M+\delta_{z_1}+\delta_{z_2})- f_\alpha(M+\delta_{z_1})- f_\alpha(M+\delta_{z_2}
)+ f_\alpha(M)\right ).
\end{align*}
Define
\begin{align*}
\gamma_{\alpha,1}  &  :=2\left(  \int_{\mathbb{R}_{+}^{6}}\sqrt{\mathbb{E}(D_{z_{1}%
}F_\alpha)^{2}(D_{z_{2}}F_\alpha)^{2}}\sqrt{\mathbb{E}(D_{z_{1},z_{3}}^{2}F_\alpha)^{2}%
(D_{z_{2},z_{3}}^{2}F_\alpha)^{2}}dz_{1}dz_{2}dz_{3}\right)  ^{1/2}\\
\gamma_{\alpha,2}  &  :=\left(  \int_{\mathbb{R}_{+}^{6}}\mathbb{E}\left [(D_{z_{1},z_{3}%
}^{2}F_\alpha)^{2}(D_{z_{2},z_{3}}^{2}F_\alpha)^{2}\right ]dz_{1}dz_{2}dz_{3}\right)
^{1/2}\\
\gamma_{\alpha,3}  &  :=\int_{\mathbb{R}_{+}^{2}}\mathbb{E}|D_{z}F_\alpha|^{3}dz
\end{align*}
In Section \ref{sec:proofCLTsparse} of the Supplementary Material \citep{CaronRousseau2017:supplement} we prove that, under Assumptions \ref{assumpt:1}, \ref{assumpt:4} and \ref{assumpt:5}, $\gamma_{\alpha,1},\gamma_{\alpha,2},\gamma_{\alpha,3}\rightarrow0$. The proof is rather lengthy, and makes repeated use of H\"older's inequality and of properties of integrals involving regularly varying functions (in particular Lemma \ref{lemma:abelianvariations2}). An application of \cite[Theorem 1.1]{Last2016} then implies that $F_\alpha\to\mathcal N(0,1)$.
\end{sketchproof}

\section{Related work and Discussion}
\label{sec:discussion}

\cite{Veitch2015} proved that Equation~\eqref{eq:nodesexpectationalmostsure} holds in probability, under slightly different assumptions: they assume that Assumption \ref{assumpt:2} holds with $a=1$ and that $\mu$ is differentiable, with some conditions on the derivative, but do not make any assumption on the existence of $\sigma$ or $\ell$. We note that for all the examples considered in Section~\ref{sec:examples}, Assumptions \ref{assumpt:1} and \ref{assumpt:2} are always satisfied, but Assumption \ref{assumpt:2} does not hold with $a=1$ for the non-separable graphon function \eqref{eq:nonseparablegraphon}. Additionally, the differentiability condition does not hold for some standard graphon models such as the stochastic blockmodel.
\cite{Borgs2018} proved, amongst other results, the almost sure convergence of the subgraph counts in graphex models (Theorem 56).
For the subclass of graphon models defined by Equation~\eqref{eq:graphonCF}, \cite{Caron2017} provided a lower bound on the growth in the number of nodes, and therefore an upper bound on the sparsity rate, using assumptions of regular variation similar to Assumption \ref{assumpt:1}. Applying the results derived in this Section, we show in Section \ref{sec:CF} that the bound is tight, and we derive additional asymptotic properties for this particular class.

As mentioned in the introduction, another class of (non projective) models that can produce sparse graphs are sparse graphons~\citep{BollobasRiordan2009,BickelChen2009,BickelChenLevina2011,Wolfe2013}. In particular, a number of authors considered the following sparse graphon model, where two nodes $i$ and $j$ in a graph of size $n$ connect with probability $\rho_n W(U_i,U_j)$ where $W:[0,1]^2\to [0,1]$ is the graphon function, measurable and symmetric and $\rho_n\to 0$. Although such model can capture sparsity, it has rather different properties compared to those of graphex models. For example, the global clustering coefficient for this sparse graphon model converges to 0, while the clustering coefficient converges to a positive constant, as shown in Proposition \ref{prop:globalclustering}.

Also  graphex processes include as a special case dense vertex-exchangeable random graphs~\citep{Hoover1979,Aldous1981,Lovasz2006,Diaconis2008}, that is models based on a graphon on $[0,1]$. They also include as a special case the class of graphon models over more general probability spaces~\citep{Bollobas2007}; see \cite[p.21]{Borgs2018} for more details.
Some other classes of graphs, such as geometric graphs arising from Poisson processes in different spaces~\citep{Penrose2003}, cannot be cast in this framework.

\section{Examples of sparse and dense models}
\label{sec:examples}

We provide here some examples of the four different cases: dense, almost dense, sparse and almost extremely sparse. We also show that the results of the previous section apply to the particular model studied by \cite{Caron2017}.

\subsection{Dense graph}

Let us consider the graphon function
$$
W(x,y)=(1-x)(1-y)~\1{x\leq1}~\1{y\leq1}$$ which has bounded support. The corresponding marginal graphon function
$
\mu(x)= \1{x\leq1} (1-x)/2%
$
has inverse $\mu^{-1}(x)=\ell(1/x)$ where $\ell(1/x)=(1-2x)\1{x\leq1/2}$  is slowly varying since $\ell(1/x)\rightarrow 1$. Assumptions 1 and 2 are satisfied, hence by Theorem \ref{th:asnumbernodes} and Corollary~\ref{th:sparsity1}
\begin{equation*}
N_\alpha\sim \alpha,
\quad \Nedges\sim \alpha^2/8, \quad
\Nedges\sim N_\alpha^2/8,\quad
\frac{N_{\alpha,j}}{N_\alpha}\rightarrow 0 \quad j \geq 1
\end{equation*}
almost surely as $\alpha\rightarrow\infty$. The function $W$ is separable and $C_\alpha^{(g)}\to 4/9$.

\subsection{Sparse, almost dense graph without power-law}

Consider the graphon function, considered by \cite{Veitch2015},
$$
W(x,y)=e^{-x-y}
$$
which has full support.
The corresponding function $\mu(x)=e^{-x}$ has inverse $\mu^{-1}(x)=\ell(1/x)=\log(1/x)\1{0<x<1}$,
which is a slowly varying function. We have $\ell_0^*(x)=1/\log(x)^2$. Assumptions 1 and 2 are satisfied and
\begin{align*}
N_\alpha\sim \alpha\log (\alpha),~~~\Nedges\sim \alpha^2/2,~~~
\Nedges\sim \frac{N_\alpha^2}{2\log(N_\alpha)^2},~~~
\frac{N_{\alpha,j}}{N_\alpha}\rightarrow 0\text{ for all }j=1,2,\ldots
\end{align*}
The function $W$ is separable, and  $C_\alpha^{(g)}\to 1/4$.

\subsection{Sparse graphs with power-law}
\label{sec:example:powerlaw}

We consider two examples here, a separable and a non-separable one. Interestingly, while both examples have similar power-law behaviours regarding the degree distribution, the clustering properties are very different. In the first example, the local clustering coefficient converges to a strictly positive constant, while in the second example, it converges to 0.

\paragraph{Separable example.}  First, consider the function%
\[
W(x,y)=(x+1)^{-1/\sigma}(y+1)^{-1/\sigma}%
\]
with $\sigma\in(0,1)$. We have
$\mu(x)=\sigma (x+1)^{-1/\sigma}/(1-\sigma)$, $
\mu^{-1}(x)=x^{-\sigma}(1/\sigma-1)^{-\sigma}-1$, $\ell(t)\sim (1/\sigma-1)^{-\sigma}$ and
$\ell_\sigma^*(t)\sim \left \{(1/\sigma-1)^{-\sigma}\Gamma(1-\sigma) \right \}^{-2/(1+\sigma)}$.
Assumptions 1 and 2 are satisfied. We have
$
N_\alpha\sim \alpha^{1+\sigma}\Gamma(1-\sigma)(1/\sigma-1)^{-\sigma}$, $
\Nedges\sim \alpha^2 \sigma^2/\{2(1-\sigma)^2\}$ and
\begin{align*}
\Nedges&\sim  \frac{\sigma^2\left \{\Gamma(1-\sigma)(\frac{1}{\sigma}-1)^{-\sigma} \right \}^{-\frac{2}{1+\sigma}}}{2(1-\sigma)^2}  N_\alpha^{2/(1+\sigma)}, \quad
\frac{N_{\alpha,j}}{N_\alpha}\rightarrow  \frac{\sigma \Gamma(j-\sigma)}{j!\Gamma(1-\sigma)}, \quad j\geq 1.
\end{align*}
The function is separable, and  we obtain, for $\sigma\in(0,1)$
\begin{align*}
\lim_{\alpha\to\infty} C_\alpha^{(g)}=\left (\frac{1-\sigma}{2-\sigma}\right )^2\text{ and }\lim_{\alpha\to\infty} C_{\alpha,j}^{(\ell)}=\left (\frac{1-\sigma}{2-\sigma}\right )^2\text{ almost surely}.
\end{align*}

\paragraph{Non-separable example.} Consider now the non-separable function
\begin{equation}
W(x,y)=(x+y+1)^{-1/\sigma-1}\label{eq:nonseparablegraphon}
\end{equation}
where $\sigma\in(0,1)$. We have
$
\mu(x)=\sigma (x+1)^{-1/\sigma}$,
$\mu^{-1}(x)=\sigma^{\sigma} x^{-\sigma}-1$, $
\ell(t)\sim \sigma^{\sigma}$ and
$\ell_\sigma^*(t)\sim \left \{\sigma^\sigma\Gamma(1-\sigma) \right \}^{-2/(1+\sigma)}$. Assumptions 1 and 2 are satisfied as for all $(x,y)\in\mathbb R_+^2$
\begin{align*}
W(x,y)&\leq (x+1)^{-1/(2\sigma)-1/2}(y+1)^{-1/(2\sigma)-1/2}=\sigma^{-1-\sigma} \mu(x)^{\frac{1+\sigma}{2}}\mu(y)^{\frac{1+\sigma}{2}}.
\end{align*} We have
$
N_\alpha\sim \alpha^{1+\sigma}\Gamma(1-\sigma)\sigma^\sigma$, $\Nedges\sim \alpha^2 \sigma^2/\{2(1-\sigma)\}$ and
\begin{align*}
\Nedges&\sim  \frac{\sigma^2\left [\Gamma(1-\sigma)\sigma^{\sigma} \right ]^{-\frac{2}{1+\sigma}}}{2(1-\sigma)}  N_\alpha^{2/(1+\sigma)}, \quad
\frac{N_{\alpha,j}}{N_\alpha}\rightarrow  \frac{\sigma \Gamma(j-\sigma)}{j!\Gamma(1-\sigma)} , \quad j \geq 1.
\end{align*}

We have $\int \mu(x)^2dx=\frac{\sigma^3}{2-\sigma}$. There is no analytical expression for $\int W(x,y)W(y,z)W(x,z)dxdydz$, but this quantity can be evaluated numerically, and is non-zero, so the global clustering coefficient converges almost surely to a non-zero constant for any $\sigma\in(0,1)$. For the local clustering coefficient, we have $\mu(x)^2\sim \sigma^2x^{-2/\sigma}$ as $x\to\infty$ and
\begin{align*}
\int W(x,y)W(x,z)W(y,z)dydz\leq x^{-2/\sigma -2}\int (y+z+1)dydz =o(\mu(x)^2).
\end{align*}
Hence the local clustering coefficients $C_{\alpha,j}^{(\ell)}$ converge in probability to 0 for all $j$.

\subsection{Almost extremely sparse graph}

Consider the function
$$
W(x,y)=\frac{1}{(x+1)(1+\log(1+x))^2}\frac{1}{(y+1)(1+\log(1+y))^2}.
$$
We have $\overline W=1$ and $\mu(x)=(x+1)^{-1}(1+\log(1+x))^{-2}$ and, using properties of inverses of regularly varying functions,
$
\mu^{-1}(x)\sim x^{-1}\ell(1/x)
$
as $x\rightarrow 0$, where $\ell(t)=\log(t)^{-2}$ is a slowly varying function. We have, for $t>1$,
$
\ell_1(t)=\int_t^\infty x^{-1}\ell(x)dx=1/\log(t)
$
and
$
\ell_1^*(t)\sim \log(t)/2.
$
Assumptions 1 and 2 are satisfied, and almost surely
\begin{align*}
\Nedges\sim \alpha^2/2,~~~
N_\alpha\sim \frac{\alpha^2}{\log(\alpha)},~~~
\Nedges\sim \frac{1}{4} N_\alpha \log(N_\alpha),\\
\frac{N_{\alpha,1}}{N_\alpha}\rightarrow 1,~~~
\frac{N_{\alpha,j}}{N_\alpha}\rightarrow 0\text{ for all }j\geq 2.
\end{align*}
$\int\mu(x)^2dx=\frac{1}{6}(2+e\text{Ei}(-1))\simeq 0.24$ where $\text{Ei}$ is the exponential integral, hence $C_\alpha^{(g)}\to 0.0576$ almost surely.

\subsection{Model of \cite{Caron2017}}
\label{ex:caronfox}
\label{sec:CF}

\cite{Caron2017} studied a particular subclass of non-separable graphon models. This class is very flexible and allows to span the whole range of sparsity and power-law behaviours described in Section~\ref{sec:asympstats}. As shown by \cite{Caron2017}, efficient Monte Carlo algorithms can be developed for estimating the parameters of this class of models. Additionally, \cite[Corollary 1.3]{Borgs2019} recently showed that this class is the limit of some sparse configuration models, providing further motivation for the study of their mathematical properties.

Let $\rho$ be a L\'evy measure on $(0,+\infty)$ and $\overline{\rho}(x)=\int_{x}^{\infty}\rho(dw)$ the corresponding tail L\'evy intensity with generalised inverse $\overline{\rho}^{-1}(x)=\inf\{u>0|\overline{\rho}(u)<x\}$. \cite{Caron2017} introduced the model defined by
\begin{align}\label{eq:graphonCF}
W(x,y)=\left\{
\begin{array}
[c]{ll}%
1-e^{-2\overline{\rho}^{-1}(x)\overline{\rho}^{-1}(y)} & x\neq y\\
1-e^{-\{\overline{\rho}^{-1}(x)\}^{2}} & x=y
\end{array}
\right. .
\end{align}
$w=\overline \rho^{-1}(x)$ can be interpreted as the sociability of a node with parameter $x$. The larger this value, the more likely it is to connect to other nodes.
The tail L\'evy intensity $\overline \rho$ is a monotone decreasing function; its behaviour at 0 will control the low degree nodes while its behaviour at infinity will control the behaviour of high degree nodes.

The following proposition formalises this and shows how the results of Sections~\ref{sec:asympstats} and \ref{sec:clt} apply to this model. Its proof is given in Section~\ref{sec:proofCaronFoxexample}.
\begin{proposition}\label{prop:CF2017}
Consider the graphon function $W$ defined by Equation~\eqref{eq:graphonCF} with L\'evy measure $\rho$ and tail L\'evy intensity $\overline\rho$.
Assume $m=\int_0^\infty w\rho(dw)<\infty$ and
\begin{equation}
\overline{\rho}(x)\sim x^{-\sigma}\widetilde\ell(1/x)\text{ as }x\to 0
\label{eq:asymptrho_zero}
\end{equation} for some $\sigma\in [0,1]$ and some slowly varying function $\widetilde\ell$. Then
Equation~\eqref{eq:integrabilityW} and Assumptions 1 and 2 hold, with $a=1$ and $\ell(x)=(2m)^\sigma \widetilde \ell(x).$ Proposition \ref{th:numberedges}, Theorems~\ref{th:meannumbernodes}, \ref{th:asnumbernodes} and Corollary \ref{th:sparsity1}  therefore hold. 
If $\int_0^\infty \psi(2w)^2\rho(dw)<\infty$, where $\psi(t)=\int(1-e^{-wt})\rho(dw)$ is the Laplace exponent, then the global clustering coefficient converges almost surely
$$
\lim_{\alpha\to\infty}C_\alpha^{(g)}= \frac{\int_{\mathbb R_+^3} (1-e^{-2xy})(1-e^{-2xz})(1-e^{-2yz})\rho(dx)\rho(dy)\rho(dz)}{\int_0^\infty \psi(2w)^2\rho(dw)}
$$
and when $\sigma\in(0,1)$, Proposition \ref{prop:localclustering} holds and for any $j\geq 2$
\begin{align*}
\lim_{\alpha\to\infty}C_{\alpha, j}^{(\ell)}=\lim_{\alpha\to\infty}\overline C_{\alpha}^{(\ell)}= &\ 1-\frac{\int_{\mathbb R_+^2} yze^{-2yz}\rho(dy)\rho(dz)}{m^2},
\end{align*}
almost surely.
For a given subgraph $F$, the CLT for the number of such subgraphs (Proposition~\ref{prop:CLT_subgraphs}) holds if $\int \psi(2\overline\rho^{-1}(x))^{2|F|-2}dx<\infty$.
Under Assumption~\ref{assumpt:1}, this condition always holds if $\sigma=0$; for $\sigma\in(0,1]$, it holds if $\overline \rho(x)=O(x^{-(2|F|-2)\sigma-\epsilon})$ as $x\to\infty$ for some $\epsilon>0$. In this case, we have
\begin{equation}
\frac{N_{\alpha}^{(F)}-E(N_\alpha^{(F)})}{\sqrt{\var(N_\alpha^{(F)})} } \rightarrow\mathcal{N}(0,1).
\end{equation}

Moreover, if $\int w^6\rho(dw)<\infty$, then Assumptions \ref{assumpt:4} and \ref{assumpt:5} also hold. It follows that Theorems~\ref{th:cltdense1}, \ref{th:cltdense2} and \ref{th:cltsparse} apply and, for any $\sigma\in[0,1]$ and any $\ell$,
\begin{equation}
\frac{N_{\alpha}-E(N_\alpha)}{\sqrt{\var(N_\alpha)} } \rightarrow\mathcal{N}(0,1).
\end{equation}

Finally, assume $\sigma\in(0,1)$ and $\widetilde\ell(t)=c>0$. If additionally
\begin{equation}
\overline \rho(x)\sim  c_0 x^{-\sigma\tau}\text{ as }x\to\infty
\label{eq:asymptrho_inf}
\end{equation}
for some $\tau>0,c_0>0$ then Assumption 3 is also satisfied with $\tau>0$, $\ell_2(x)=\frac{c_0}{2^{\sigma\tau}c^\tau\Gamma(1-\sigma)^\tau} $ and Proposition \ref{prop:largedegrees} applies; that is,
for
fixed $\alpha$
\[
E(N_{\alpha,j})\sim\frac{\alpha^{1+\tau}\tau\ell_{2}(j)}{j^{1+\tau}}\text{ as
}j\rightarrow\infty.
\]
\end{proposition}

\noindent We consider below two specific choices of mean measures $\rho$. Both measures have similar properties for large graph size $\alpha$, but different properties for large degrees $j$.

\paragraph{Generalised Gamma measure.} Let $\rho$ be the generalised gamma measure
\begin{equation}
\rho(dw)=1/\Gamma(1-\sigma_0)w^{-1-\sigma_0}e^{-\tau_0 w}dw\label{eq:GGP}
\end{equation}
with $\tau_0>0$ and $\sigma_0\in(-\infty,1)$. The tail L\'evy intensity satisfies
\begin{align*}
\overline\rho(x)\sim \left \{
\begin{array}{ll}
  \frac{1}{\Gamma(1-\sigma_0)\sigma_0}x^{-\sigma_0} & \sigma_0>0 \\
  \log(1/x) & \sigma_0= 0 \\
  -\frac{\tau_0^{\sigma_0}}{\sigma_0}& \sigma_0<0
\end{array}\right .
\end{align*}
as $x\to 0$. Then for $\sigma_0\in(0,1)$ (sparse with power-law)
\begin{align*}
\Nedges\asymp  N_\alpha^{2/(1+\sigma_0)}, \quad
\frac{N_{\alpha,j}}{N_\alpha}\rightarrow  \frac{\sigma_0 \Gamma(j-\sigma_0)}{j!\Gamma(1-\sigma_0)}, \quad j \geq 1.
\end{align*}
For $\sigma_0=0$ (sparse, almost dense),
$
\Nedges\asymp  N^2_\alpha/\log(N_\alpha)^2$ and $N_{\alpha,j}/N_\alpha\rightarrow  0,  j \geq 1;
$
 for $\sigma_0<0$ (dense)
$
\Nedges\asymp  N^2_\alpha$ and $N_{\alpha,j}/N_\alpha\rightarrow  0, \quad j \geq 1
$ almost surely as $\alpha$ tends to infinity. The constants in the asymptotic results are omitted for simplicity of exposure but can be obtained as well from the results of Section~\ref{sec:asympstats}. $\int w^p\rho(dw)<\infty$ for all $p\geq 1$, hence the global clustering coefficient converges, and the CLT applies for the number of subgraphs and the number of nodes. Note that Equation~\eqref{eq:asymptrho_inf} is not satisfied, as the L\'evy measure has exponentially decaying tails, and Proposition \ref{prop:largedegrees} does not apply.  The asymptotic properties of this model are illustrated in Figure~\ref{fig:illustration} for $\sigma_0=0.2$ and $\tau_0=2$ (sparse, power-law regime).

\paragraph{Generalised gamma Pareto measure.} Consider the generalised gamma Pareto measure, introduced by \cite{Ayed2019,Ayed2020}
$$
\rho(dw)= \frac{1}{\Gamma(1-\sigma)}w^{-1-\sigma\tau}\gamma(\sigma(\tau-1),\beta w)dw
$$
where $\gamma(s,x)=\int_0^x u^{s-1}e^{-u}du$ is the lower incomplete gamma function, $c>0$, $\tau>1$, $\sigma\in(0, 1)$.  The tail L\'evy intensity satisfies
\begin{align*}
\overline\rho(x)&\sim  cx^{-\sigma}\text{ as }x\to 0\\
\overline\rho(x)&\sim c_0x^{-\sigma\tau}\text{ as }x\to \infty
\end{align*}
where $c=\frac{\beta^{\sigma(\tau-1)}}{\sigma^2(\tau-1)\Gamma(1-\sigma)}$ and $c_0=\frac{\Gamma(\sigma(\tau-1))}{\sigma\tau\Gamma(1-\sigma)}$. It is both regularly varying at 0 and infinity and satisfies \eqref{eq:asymptrho_zero} and \eqref{eq:asymptrho_inf}. We therefore have, almost surely,
\begin{align*}
\Nedges\asymp  N_\alpha^{2/(1+\sigma)}, \quad
\frac{N_{\alpha,j}}{N_\alpha}\rightarrow  \frac{\sigma_0 \Gamma(j-\sigma)}{j!\Gamma(1-\sigma)}, \quad j \geq 1.
\end{align*}
Proposition \ref{prop:largedegrees} applies and, for large degree nodes,
 \[
E(N_{\alpha,j})\sim\frac{\tau\alpha^{1+\tau}c_0}{2^{\sigma\tau }c^\tau\Gamma(1-\sigma)^\tau} \frac{1}{j^{1+\tau}}\text{ as
}j\rightarrow\infty.
\]
The global clustering coefficient converges if $\tau>2$, and the CLT applies for the number of subgraphs $F$ if $\tau>2|F|-2$, and for the number of nodes if $\sigma\tau>6$.

\subsection{Proof of Proposition \ref{prop:CF2017}}\label{sec:proofCaronFoxexample}
 The marginal graphon function is given by
$
\mu(x)=\psi(2\overline \rho^{-1}(x))
$
where $\psi(t)=\int_0^\infty (1-e^{-wt})\rho(dw)$ is the Laplace exponent. Its generalised inverse is given by
$
\mu^{-1}(x)=\overline\rho(\psi^{-1}(x)/2).
$
The Laplace exponent satisfies $\psi(t)\sim m t$ as $t\to 0$. It therefore follows that $\mu^{-1}$ satisfies Assumption 1 with
$
\ell(x)=(2m)^\sigma \widetilde \ell(x).
$
Ignoring loops, the model is of the form given by Equation \eqref{eq:bernoullipoissonlink} with $f(x)=2m \overline{\rho}^{-1}(x)$. Assumption 2 is therefore satisfied. Regarding the global clustering coefficient, $\int \psi(2w)^2\rho(dw)\leq 4\int w^2\rho(dw)<\infty$ so its limit is finite. For the local clustering coefficient, using dominated convergence and the inequality $\frac{1-e^{-2\overline{\rho
}^{-1}(x)y}}{2\overline{\rho}^{-1}(x)}\leq y$, we obtain
\begin{align*}
\int W(x,y)W(y,z)W(x,z)dydz  &  =\int(1-e^{-2\overline{\rho}^{-1}%
(x)y})(1-e^{-2\overline{\rho}^{-1}(x)z})(1-e^{-2yz})\rho(dy)\rho(dz)\\
&  \sim4\overline{\rho}^{-1}(x)^{2}\int yz(1-e^{-2yz})\rho(dy)\rho(dz)
\end{align*}
Using the fact that $\mu(x)=\psi(2\overline \rho^{-1}(x))\sim 2m\overline \rho^{-1}(x)$ as $x\to\infty$, we obtain the result. Finally, if $\overline\rho$ satisfies \eqref{eq:asymptrho_zero}, then $\psi(t)\sim \Gamma(1-\sigma)\widetilde \ell(t)t^\sigma$ as $t\to\infty$. Using \cite[Proposition 1.5.15]{Bingham1987} $$\psi^{-1}(t)\sim \Gamma(1-\sigma)^{-1/\sigma}\widetilde\ell^{\#1/\sigma}(t^{1/\sigma})t^{1/\sigma}$$
as $t\to\infty$, where $\widetilde{\ell}^{\#}$ is the de Bruijn conjugate of $\widetilde{\ell}%
$. We obtain
$
\psi^{-1}(t)=\ell_3(t^{1/\sigma})t^{\frac{1}{\sigma}}%
$
where $\ell_3$ is a slowly varying function with
$
\ell_3(t^{1/\sigma})\sim\widetilde{\ell}^{\#1/\sigma}(t^{1/\sigma})\Gamma
(1-\sigma)^{-1/\sigma}\text{ as }t\rightarrow\infty.
$
We therefore have
$ \mu^{-1}(t)  \sim c_0 2^{-\tau\sigma}\ell_3(t^{1/\sigma})^{\sigma\tau} t^{\tau}\text{ as }t\to\infty.
$
If $\widetilde\ell(t)=c$, then $\ell_3(t)=(c\Gamma(1-\sigma))^{-1/\sigma}$.

For the CLT for the number of subgraphs $F$ to hold, we need $\int_0^\infty \mu(x)^{2|F|-2}dx<\infty$. As $\mu$ is monotone decreasing and integrable, we only need $\mu(x)^{2|F|-2}=\psi(2\overline\rho^{-1}(x))^{2|F|-2}$ to be integrable in a neighbourhood of 0.
In the dense case, $\psi(t)$ is bounded, and the condition holds. If $\overline\rho$ satisfies \eqref{eq:asymptrho_zero}, then $\psi(t)\sim \Gamma(1-\sigma)\widetilde \ell(t)t^\sigma$ as $t\to\infty$. For $\sigma\in(0,1]$ (sparse regime) the condition holds if $\overline\rho(x)=O(x^{-(2|F|-2)\sigma-\epsilon})$ as $x\to\infty$ for some $\epsilon>0$.

We now check the assumptions for the CLT for the number of nodes. Noting again that $\mu(x)\sim 2m\overline\rho^{-1}(x)$ as $x\to\infty$, we have, using the inequality $1-e^{-x}\leq x$,
\begin{align*}
\nu(x,y)&=\int \left (1-e^{-2\overline\rho^{-1}(x)w}\right )\left (1-e^{-2\overline\rho^{-1}(y)w}\right )\rho(dw)\\
&\leq L(x)L(y)\mu(x)\mu(y)
\end{align*}
where $L(x)=2 \frac{\overline\rho^{-1}(x)}{\mu(x)}\sqrt{\int w^2\rho(dw)}\to \sqrt{\int w^2\rho(dw)}/m$ as $x\to\infty$. Using now the inequality $1-e^{-x}\geq xe^{-x}$, we have
\begin{align*}
\nu(x,y)&\geq  4\overline\rho^{-1}(x)\overline\rho^{-1}(y)\int w^2e^{-2(\overline\rho^{-1}(x)+\overline\rho^{-1}(y))w}\rho(dw)
\end{align*}
As $\int w^2e^{-2(\overline\rho^{-1}(x)+\overline\rho^{-1}(y))w}\rho(dw)\to\int w^2\rho(dw) $ as $\min(x,y)\to\infty$, there is $C_0=2\int w^2\rho(dw)$ and $x_0$ such that for all $x,y>x_0$, $\nu(x,y)\geq C_0\mu(x)\mu(y)$.

More generally, if $\int w^6\rho(dw)<\infty$, then for any $j\leq 6$
\begin{align*}
\int_0^\infty \prod_{i=1}^j W(x_i,y)dy &\leq \prod_{i=1}^j L(x_i)\mu(x_i)
\end{align*}
where $L(x)=2 \frac{\overline\rho^{-1}(x)}{\mu(x)}\max\left (1, \max_{j=1,\ldots,6}\int w^j \rho(dw)\right )\to \max\left (1, \max_{j=1,\ldots,6}\int w^j \rho(dw)\right )/m$ as $x\to\infty$. Note also that $\int L(x)\mu(x)dx= 2\max\left (1, \max_{j=1,\ldots,6}\int w^j \rho(dw)\right ) \int w\rho(dw)<\infty$.

\section{Sparse and dense models with local structure}
\label{sec:localglobal}

In this section, we develop a class of models which allows to control separately the local structure, for example the presence of communities or particular subgraphs, and the global sparsity/power-law properties. The class of models introduced can be used as a way of sparsifying any dense graphon model.

\subsection{Statement of the results}

Due to Kallenberg's representation theorem, any exchangeable point process can be represented by Equation \eqref{eq:Zij}. However, it may be more suitable to use a different formulation where the function $W$ is defined on a general space, not necessarily $\mathbb R_+^2$, as discussed by~\cite{Borgs2018}. Such a construction may lead to more interpretable parameters and easier inference methods. Indeed, a few sparse vertex-exchangeable models, such as the models of \cite{Herlau2016} or \cite{Todeschini2016} are written in a way such that it is not straightforward to express them in the form given by  \eqref{eq:Zij}.

In this section we show that the above results easily extend to models expressed in the following way. Let $F$ be a probability space.  Writing  $\vartheta = (u,v)\in \mathbb R_+\times F$, let $\xi(d\vartheta)=du G(dv)$ where $G$ is some probability distribution on  $F$. Consider models expressed as in \eqref{eq:pointprocess} with
\begin{align}
Z_{ij}\mid (\theta_k,\vartheta_k)_{k=1,2,\ldots}\sim \text{Bernoulli}\{ W\ ( \vartheta_i, \vartheta_j)\}, \quad W : (\mathbb R_+\times F)^2 \rightarrow[0,1]\label{eq:generalW}
\end{align}
where $(\theta_k,\vartheta_k)_{k=1,2,\infty}$ are the points of a Poisson point process with mean measure $d\theta\xi(d \vartheta)$ on $\mathbb R_+\times (\mathbb R_+ \times F)$.  Let us assume additionally that the function $W$ factorizes in the following way
\begin{equation}
W((u_i,v_i),(u_j,v_j))=\omega(v_i,v_j)\eta(u_i,u_j).\label{eq:separableW}
\end{equation}
where $\omega:F\times F\rightarrow [0,1]$ and the function $\eta:\mathbb R_+\times\mathbb R_+\rightarrow [0,1]$ is integrable.
 In this model $\omega$ can capture the local structure, as in the classical dense graphon, and $\eta$ the sparsity behaviour of the graph. Let $\mu_\eta(u)=\int_0^\infty \eta(u,u')du'$, $\mu_\omega (v) = \int_F\omega(v,v')G(dv')$ and $\nu_\eta(x,y)=\int_{\mathbb R_+^2} \eta(x,z)\eta(y,z)dz$. The results presented in Section \ref{sec:asympstats} remain valid when $\mu_\eta$ and $\nu_\eta$ satisfy Assumptions \ref{assumpt:1} and \ref{assumpt:2}. The proof of Proposition~\ref{prop:local-global} is given in Section~\ref{sec:prooflocalglobal}.
\begin{proposition}\label{prop:local-global}
Consider the model defined by Equations \eqref{eq:generalW} and \eqref{eq:separableW} and assume that the functions $\mu_\eta$ and $\nu_\eta$ satisfy Assumptions 1 and 2. Then the conclusions of Proposition \ref{th:numberedges} hold and so do the conclusions of Theorems \ref{th:meannumbernodes} and \ref{th:asnumbernodes} with $\ell(\alpha)$ and $\ell_1(\alpha)$ replaced respectively  by
$$ \tilde \ell(\alpha ) = \ell(\alpha) \int_F \mu_\omega(v)^\sigma G(dv) , \quad \tilde \ell_1(\alpha) = \ell_1(\alpha)   \int_F \mu_\omega(v)^\sigma G(dv).$$

\end{proposition}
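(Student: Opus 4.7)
The plan is to mimic the proofs of Proposition~\ref{th:numberedges} and Theorems~\ref{th:meannumbernodes}--\ref{th:asnumbernodes} by conditioning on the marks $(v_k)$, thereby reducing every calculation to the one-variable framework of Section~\ref{sec:main} and then integrating over $v$ against $G$. The essential observation is that, under the factorisation \eqref{eq:separableW}, Poisson thinning implies that conditionally on $\vartheta_i=(u_i,v_i)$ the degree $D_{\alpha,i}$ is Poisson with parameter $\alpha\mu_\eta(u_i)\mu_\omega(v_i)$ (up to self-loops). Proposition~\ref{th:numberedges} then follows immediately from $\overline{W}=\overline{\eta}\int_{F\times F}\omega(v,v')G(dv)G(dv')$ and the argument already used there.

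For the mean node counts, Campbell's formula gives
\begin{equation*}
E(N_\alpha)=\alpha\int_F G(dv)\int_0^\infty\bigl\{1-e^{-\alpha\mu_\omega(v)\mu_\eta(u)}\bigr\}du,
\end{equation*}
and an analogous representation for $E(N_{\alpha,j})$. For each $v$ with $\mu_\omega(v)>0$, the inner integral is exactly the quantity analysed in the proof of Theorem~\ref{th:meannumbernodes}, now with effective intensity $s=\alpha\mu_\omega(v)\to\infty$. The Karamata Tauberian theorem, combined with the slowly varying property $\ell(\alpha\mu_\omega(v))\sim\ell(\alpha)$, yields $\int_0^\infty\{1-e^{-s\mu_\eta(u)}\}du\sim\alpha^\sigma\mu_\omega(v)^\sigma\ell(\alpha)\Gamma(1-\sigma)$ for $\sigma\in[0,1)$. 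Integrating over $v$, dominated convergence (via the uniform bound obtained at $\mu_\omega\equiv 1$) extracts the factor $\int_F\mu_\omega(v)^\sigma G(dv)$ and delivers $E(N_\alpha)\sim\alpha^{1+\sigma}\tilde\ell(\alpha)\Gamma(1-\sigma)$. The regime $\sigma=1$ and the expected degree-$j$ counts are handled identically, Karamata supplying the correct prefactors in each case.

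The almost sure statements then transfer from Theorem~\ref{th:asnumbernodes}, because the covariance input in the factorised model is
\begin{equation*}
\nu^{\ast}((u,v),(u',v'))=\nu_\eta(u,u')\int_F\omega(v,w)\omega(v',w)G(dw)\leq\nu_\eta(u,u'),
\end{equation*}
since $\omega\leq 1$. Hence Assumption~\ref{assumpt:2} applied to $\nu_\eta$ already controls $\nu^\ast$, and the variance/martingale argument used to establish $N_\alpha\sim E(N_\alpha)$ and $\sum_{k\geq j}N_{\alpha,k}\sim E(\sum_{k\geq j}N_{\alpha,k})$ goes through unchanged. Combining these concentration statements with the mean asymptotics derived above completes the proof.

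The step I expect to be the main obstacle is the justification of the dominated-convergence argument uniformly in $\sigma\in[0,1]$, particularly at the boundary $\sigma=1$ where the relevant slowly varying normaliser switches from $\ell$ to $\ell_1$. One needs a single $G$-integrable dominator valid across this regime, and must verify that the contribution from marks $v$ with $\mu_\omega(v)$ close to zero does not spoil the asymptotic constant $\int_F\mu_\omega(v)^\sigma G(dv)$, which itself can be delicate to control when $G$ puts non-negligible mass near $\{\mu_\omega=0\}$.
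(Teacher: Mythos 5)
Your proposal follows essentially the same route as the paper: both reduce to the one-dimensional Abelian lemmas by fixing $v$, applying them at the effective rate $t=\alpha\mu_\omega(v)$, using slow variation to replace $\ell\{\alpha\mu_\omega(v)\}$ by $\ell(\alpha)$, and dominating by the $\mu_\omega\equiv 1$ bound in order to integrate over $F$. The one point to watch is that $\nu^{\ast}\leq\nu_\eta$ only yields $\nu^{\ast}\leq C_1\mu_\eta(u)^a\mu_\eta(u')^a$ rather than a bound in terms of the full mean function $\mu(u,v)=\mu_\eta(u)\mu_\omega(v)$, so the variance integrals still need the same fix-$v$-then-dominate treatment rather than going through literally ``unchanged'' --- but this is precisely the level of detail the paper itself leaves implicit, and you correctly flag the delicate dominated-convergence step near $\{\mu_\omega=0\}$.
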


Consider for example the following class of models for sparse and dense stochastic block-models.

 \begin{example}[Dense and Sparse stochastic block-models]
Consider $F=[0,1]$ and $G$ the uniform distribution on $[0,1]$. We choose for $\omega$ the graphon function associated to a (dense) stochastic block-model. For some partition $A_1,\ldots,A_p$ of $[0,1]$, and any $v,v'\in[0,1]$, let
\begin{equation}
\omega(v,v')= B_{k,\ell}
\label{eq:SBM}
\end{equation}

with $v\in A_k$, $v'\in A_\ell$ and $B$ is a $p\times p$ matrix where $B_{k,\ell}\in[0,1]$ denotes the probability that a node in community $k$ forms a link with a node in community $\ell$. $\omega$ defines the community structure of the graph, and $\eta$ will tune its sparsity properties. Choosing $\eta(x,y)=\1{x\leq 1}\1{y\leq 1}$ yields the dense, standard stochastic block-model. Choosing $\eta(x,y)=\exp(-x-y)$ yields a sparse stochastic block-model without power-law behaviour, etc. An illustration of this model to obtain sparse stochastic block-models with power-law behaviour, generalizing the model of Section \ref{sec:example:powerlaw}, is given in Figure \ref{fig:sparseSBM}. The function $\omega$ is defined by:
$A_1=[0,0.5),A_2=[0.5,0.8),A_3=[0.8,1],B_{11}=0.7,B_{22}=0.5,B_{33}=0.9,B_{12}=B_{13}=0.1,B_{23}=0.05$
                                       and $\eta(x,y)=(1+x)^{-1/\sigma}(1+y)^{-1/\sigma}$, with $\sigma=0.8$.
\begin{figure}[t]
\begin{center}
\subfigure[Function $\omega$]{\includegraphics[width=.38\textwidth]{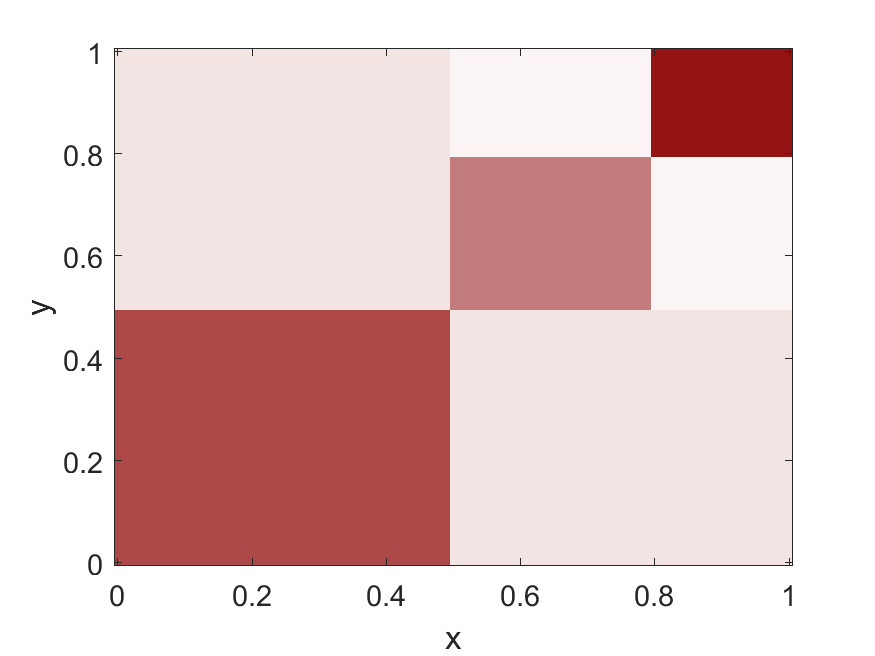}}
\subfigure[Function $\eta$]{\includegraphics[width=.38\textwidth]{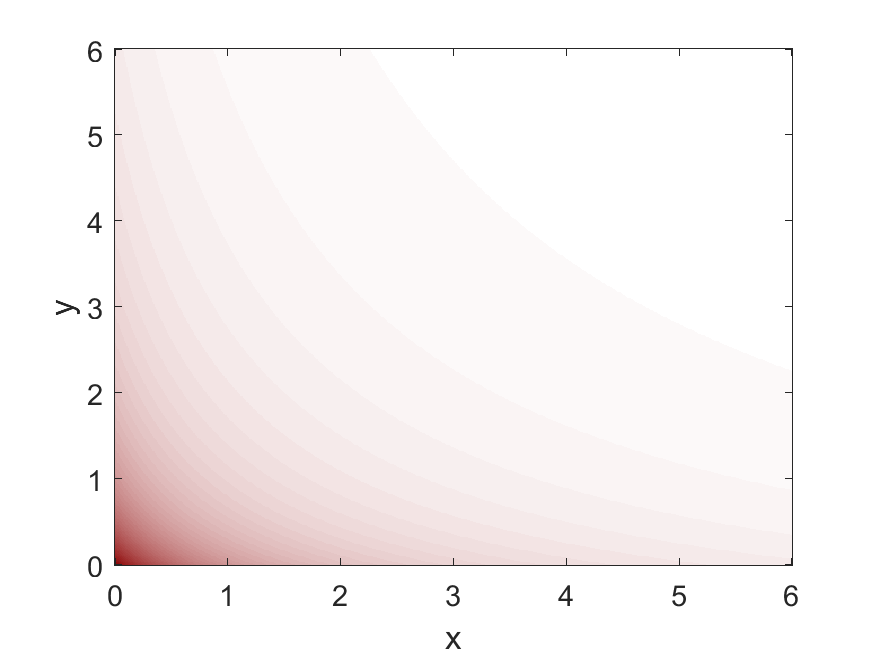}}
\subfigure[Sampled graph]{\includegraphics[width=.38\textwidth]{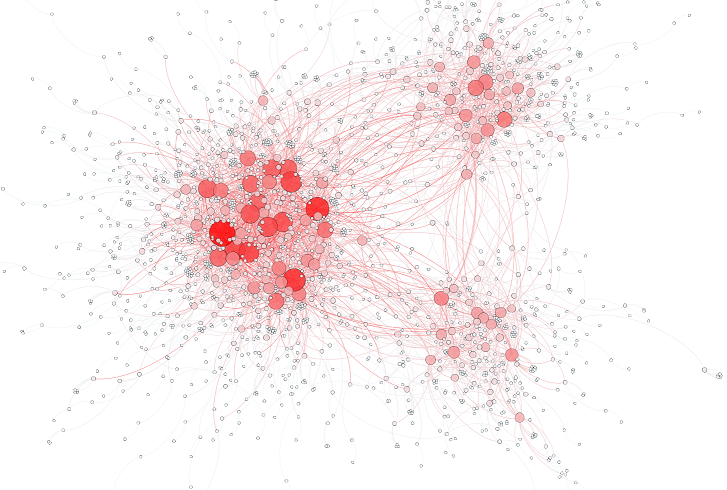}}
\subfigure[Empirical degree distribution of the sampled graph]{\includegraphics[width=.38\textwidth]{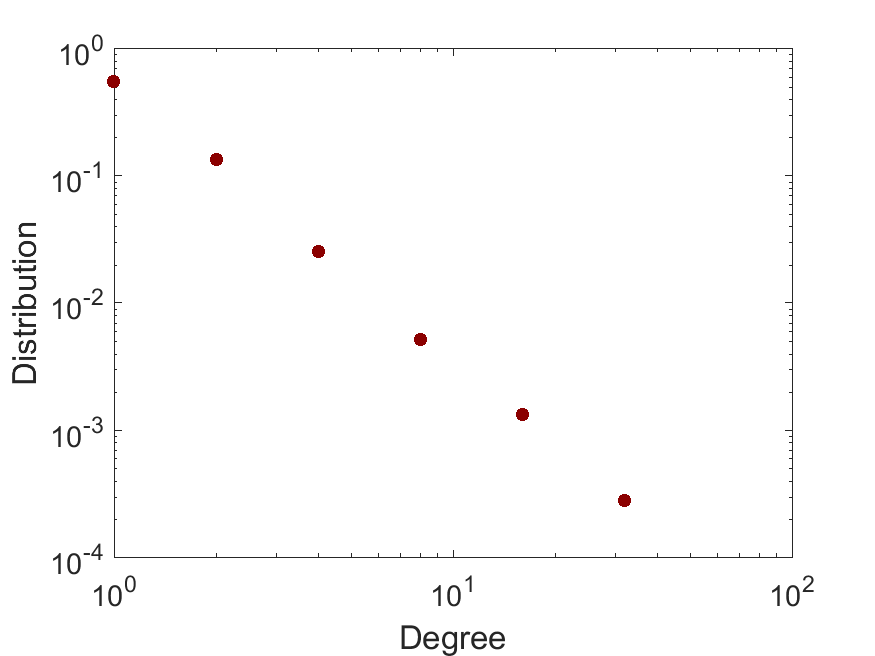}}
\end{center}
\caption{Illustration of a sparse stochastic block-model with 3 communities. (a) Function $\omega$, that controls the local community structure. A darker color represents a higher value. (b) Function $\eta$, that controls the sparsity. (c) Graph sampled from the sparse stochastic block-model using $\alpha=50$. The size of each node are proportional to its degree. (d) Empirical degree distribution of the sampled graph.}
\label{fig:sparseSBM}
\end{figure}
\end{example}

More generally, one can build on the large literature on (dense) graphon/exchangeable graph models, and combine these models with a function $\eta$ satisfying Assumptions 1 and 2, such as those described in the previous section, in order to sparsify a dense graphon and control its sparsity/power-law properties.

\begin{remark}
We can also obtain asymptotic results for those functions $W$ that do not satisfy the separability condition~\eqref{eq:separableW}. Let $\mu(u,v)=\int_{\mathbb R_+\times F} W((u,v),(u',v'))du'dv'$.
Assume that, for each fixed $v$, there exists $u_0(v)>0$ such that for $u>u_0$
\begin{equation}
C_3\tilde\mu_\eta(u)\tilde\mu_\omega(v)\leq \mu(u,v)\leq C_4\tilde\mu_\eta(u)\tilde\mu_\omega(v)\label{eq:inequality}
\end{equation}
where $\tilde\mu_\omega:F\rightarrow \mathbb R_+$, $\tilde\mu_\eta:\mathbb R_+ \rightarrow \mathbb R_+$ with $\tilde\mu_\eta(u)=\int_0^\infty \tilde\eta(u,u')du'$ for some positive function $\tilde\eta$, and $C_3>0$ and $C_4>0$. Assume that $\tilde\mu_\eta$ and $\tilde\nu_\eta$ verify Assumptions 1 and 2. Then the results of Theorems \ref{th:meannumbernodes} and \ref{th:asnumbernodes}, Corollary \ref{th:sparsity1} hold up to a constant. For example, we have for $\sigma\in[0,1]$,
$
\Nedges\asymp  N_\alpha^{2/(1+\sigma)}\ell_\sigma^*(N_\alpha)
$
almost surely as $\alpha$ tends to infinity. In particular, the inequality from \eqref{eq:inequality} is satisfied if
\begin{equation}
W((u_i,v_i),(u_j,v_j))=1-e^{-\tilde \omega(v_i,v_j)\tilde\eta(u_i,u_j)}.\label{eq:modelherlautodes}
\end{equation}
The models developed by \cite{Herlau2016} and \cite{Todeschini2016} for capturing (overlapping) communities fit in this framework. Ignoring loops, both models can be written under the form given by Equation \eqref{eq:modelherlautodes} with
$
\tilde\eta(u,u')=2\overline\rho^{-1}(u)\overline\rho^{-1}(u')
$,
where $\rho$ is a L\'evy measure on $(0,+\infty)$ and $\overline{\rho}(x)=\int_{x}^{\infty}\rho(dw)$ is the tail L\'evy intensity with generalised inverse $\overline{\rho}^{-1}(x)$. When $\tilde \omega$ is given by Equation \eqref{eq:SBM}, it corresponds to the (dense) stochastic blockmodel graphon  of \cite{Herlau2016} and if $\tilde \omega(v_i,v_j)=v_i^{T}v_j$ with $v_i\in \mathbb R_+^p$, it corresponds to the model of \cite{Todeschini2016}. For instance, let $\rho$ be the mean measure from Equation \eqref{eq:GGP} with parameters  $\tau_0>0$ and $\sigma_0\in(-\infty,1)$. Then for $\sigma_0\in(0,1)$, the corresponding sparse regime with power-law for this graph is given by
\begin{align*}
\Nedges\asymp  N_\alpha^{2/(1+\sigma_0)}, \quad
\frac{C_3}{C_4}\frac{\sigma_0 \Gamma(j-\sigma_0)}{j!\Gamma(1-\sigma_0)}\leq \lim_{\alpha\rightarrow\infty}\frac{N_{\alpha,j}}{N_\alpha}\leq  \frac{C_4}{C_3}\frac{\sigma_0 \Gamma(j-\sigma_0)}{j!\Gamma(1-\sigma_0)}, \quad j \geq 1
\end{align*}
For $\sigma_0=0$ (sparse, almost dense regime)
$
\Nedges\asymp  N^2_\alpha/\log(N_\alpha)^2$ and $
N_{\alpha,j}/N_\alpha\rightarrow  0, j\geq 1;
$
for $\sigma_0<0$ (dense regime)
$
\Nedges\asymp  N^2_\alpha$ and
$N_{\alpha,j}/N_\alpha\rightarrow  0,  j \geq 1
$
almost surely as $\alpha$ tends to infinity.
\end{remark}

\subsection{Proof of Proposition \ref{prop:local-global}}
\label{sec:prooflocalglobal}

The proofs of Proposition \ref{th:numberedges} and Theorems \ref{th:meannumbernodes} and \ref{th:asnumbernodes} hold with $x$ replaced by $(u,v) \in \mathbb R_+ \times F$,  $dx = du G(dv)$ and
$\mu(x) = \mu_\eta(u)\mu_\omega(v)$.
We thus need only prove that if $\eta$ verifies Assumptions \ref{assumpt:1} and \ref{assumpt:2} then Lemmas \ref{lemma:abelianvariations}, \ref{lemma:abelianvariationsv2} and \ref{lemma:abelianvariationsv3} in Appendix hold.
Recall that $\mu(x) = \mu_\eta (u) \mu_\omega(v) $, for $x=(u,v)$. Then for all $v$ such that $\mu_\omega(v) >0$ we  apply Lemma \ref{lemma:abelianvariations} to
$$g_0(t)  = \int_0^\infty (1 - e^{-t \mu_\eta(u)})du, \quad g_r(t)  = \int_0^\infty \mu_\eta(u)^r e^{-t \mu_\eta(u)}du, \quad t = \alpha \mu_\omega(v).$$
This leads to, for all $v$ such that $\mu_\omega(v) >0$
\begin{equation*}
\begin{split}
\int_0^\infty (1 - e^{-\alpha \mu_\omega(v) \mu_\eta(u)})du &= \Gamma(1-\sigma) \alpha^\sigma \ell(\alpha) \mu_\omega(v)^\sigma\frac{ \ell\{\alpha \mu_\omega(v) \} }{ \ell(\alpha) } \{1 + o(1) \} \\
& = \Gamma(1-\sigma) \alpha^\sigma \ell(\alpha) \mu_\omega(v)^\sigma \{ 1 + o(1)\}.
\end{split}
\end{equation*}
To prove that there is convergence in $L_1(G)$, note that if $\mu_\omega(v) >0$ and since $\mu_\omega \leq 1$,
\begin{equation*}
\begin{split}
\int_0^\infty (1 - e^{-\alpha \mu_\omega(v) \mu_\eta(u)})du& =  \int_0^\infty  \mu_\eta^{-1}\left\{\frac{ z}{ \alpha \mu_\omega(v) }\right\}e^{-z} dz \leq  \int_0^\infty \mu_\eta^{-1}\left(\frac{ z}{ \alpha  }\right)e^{-z}dz.
\end{split}
\end{equation*}
Moreover
 $$\sup_{\alpha \geq 1} \frac{1}{\alpha^{\sigma}\ell(\alpha)}  \int_0^\infty \mu_\eta^{-1}\left(\frac{ z}{ \alpha  }\right)e^{-z}dz < +\infty , $$
thus the Lebesgue dominated convergence theorem implies
 $$\int_F \int_0^\infty (1 - e^{-\alpha \mu_\omega(v) \mu_\eta(u)})duG(dv) \sim \Gamma(1-\sigma) \alpha^\sigma \ell(\alpha) \int_F\mu_\omega(v)^\sigma G(dv)$$
 when $\sigma <1$ and when $\sigma=1$,
 $$\int_F \int_0^\infty (1 - e^{-\alpha \mu_\omega(v) \mu_\eta(u)})duG(dv) \sim \alpha\ell_1(\alpha) \int_F\mu_\omega(v) G(dv). $$
 The same reasoning is applied to the integrals
 $$\int_F \mu_\omega(v)^r \int_0^\infty \mu_\eta(u)^re^{-\alpha \mu_\omega(v) \mu_\eta(u)}duG(dv).$$
To verify Lemma \ref{lemma:abelianvariationsv2}, note that
\begin{equation*}
\begin{split}
h_0(\alpha) &= \int_F \omega(v,v) \int_0^\infty \eta(u,u)( 1 - e^{-\alpha \mu_\omega(v)\mu_\eta(u) })du G(dv), \\
 h_r(\alpha) &= \int_F \omega(v,v)\mu_\omega(v)^r  \int_0^\infty \eta(u,u)\mu_\eta(u)^r e^{-\alpha \mu_\omega(v)\mu_\eta(u) }du G(dv)
 \end{split}
\end{equation*}
so that the  Lebesgue dominated convergence Theorem also leads to
$$h_0(\alpha ) \sim \int_F \omega(v,v) \int_0^\infty \eta(u,u)du G(dv), \quad h_r(\alpha ) = o(\alpha^{-r} )$$
and the control of the integrals $\int_{\mathbb R_+\times F} \{t\mu(u,v)\}e^{-t \mu(u,v)}duG(dv )$ as in Lemma \ref{lemma:abelianvariationsv3}.

\section{Conclusion}

In this article, we derived a number of properties of graphs based on exchangeable random measures. We relate the sparsity and power-law properties of the graphs to the regular variation properties of the marginal graphon function, identifying four different regimes, from dense to almost extremely sparse.
We derived asymptotic results for the global and local clustering coefficients.
We derived a central limit theorem for the number of nodes $N_\alpha$ in the sparse and dense regimes, and for the number of nodes of degree greater than $j$ in the dense regime. We conjecture that a CLT also holds for $N_{\alpha,j}$ in the sparse regime, under assumptions similar to Assumptions \ref{assumpt:4} and \ref{assumpt:5}, and that a (lengthy) proof similar to that of Theorem~\ref{th:cltsparse} could be used. We leave this for future work.

\section*{Acknowledgment}
The authors thank Zacharie Naulet for helpful feedback and suggestions on an earlier version of this article.
The project leading to this work has received funding from the European Research Council (ERC) under the European Union's Horizon 2020 research and innovation programme
(grant agreement No 834175). At the start of the project, Francesca Panero was funded by the EPSRC and MRC Centre for Doctoral Training in Statistical Science (grant code EP/L016710/1).

\bibliographystyle{chicago}
\bibliography{bnpnetwork}



\setcounter{section}{0}
\renewcommand{\thesection}{\Alph{section}}
\counterwithin{theorem}{section}
\counterwithin{theorem}{section}

\newpage
\begin{appendices}
\section{Proofs of Theorem~\ref{th:meannumbernodes} and Proposition \ref{prop:localclustering}}
\label{sec:proofmain}

Let $g_{\alpha,x}(\theta,\vartheta)$ be defined, for any $\alpha,x,\theta,\vartheta>0$, by
\begin{equation}\label{eq:defg}
g_{\alpha,x}(\theta,\vartheta)=-\log\{1-W(x,\vartheta)\}\1{\theta\leq\alpha}.
\end{equation}

\subsection{Proof of Theorem \ref{th:meannumbernodes}}%
\label{sec:proofmeannodesedges}

The mean number of nodes is \cite[Theorem 5.4]{Veitch2015}
\begin{align*}
E(N_{\alpha} )=\alpha\int_{\mathbb{R}_{+}}\{1-e^{-\alpha\mu(x)}\}dx+\alpha\int
_{\mathbb{R}_{+}}W(x,x)e^{-\alpha\mu(x)}dx.
\end{align*}
By the Lebesgue dominated convergence, we have
$\alpha\int_{\mathbb{R}_{+}}W(x,x)e^{-\alpha\mu(x)}dx=o(\alpha)$.
We have, using Lemma \ref{lemma:abelianvariations}, for $\sigma\in[0,1)$, as $\alpha $ goes to infinity
$
\int_{\mathbb{R}_{+}}(1-e^{-\alpha\mu(x)})dx   \sim\alpha^{\sigma}\ell(\alpha)\Gamma(1-\sigma),
$
and for $\sigma=1$,
$
\int_{\mathbb{R}_{+}}\{1-e^{-\alpha\mu(x)}\}dx   \sim\alpha\ell_1(\alpha).
$ It follows that, as $\alpha $ goes to infinity
\[
E(N_{\alpha})\sim\left \{
\begin{array}{ll}
  \alpha^{\sigma+1}\ell(\alpha)\Gamma(1-\sigma) & \text{if }\sigma\in[0,1) \\
  \alpha^2\ell_1(\alpha) & \text{if }\sigma=1
\end{array}\right .
.
\]

The mean number of nodes of degree $j$ is \cite[Theorem 5.5]{Veitch2015}
\begin{equation}
\begin{split}
E(N_{\alpha,j})   &=\frac{\alpha^{j+1}}{j!}\int_{\mathbb{R}_{+}}(1-W(\vartheta,\vartheta))e^{-\alpha\mu
(\vartheta)}\mu(\vartheta)^{j}d\vartheta  +\frac{\alpha^{j}}{j-1!}\int_{\mathbb{R}_{+}}e^{-\alpha\mu
(\vartheta)}W(\vartheta,\vartheta)\mu(\vartheta)^{j-1} d\vartheta
\end{split}\label{eq:ExpNalphaj}
\end{equation}
 Lemma~\ref{lemma:abelianvariationsv2}, implies that
$$
-\frac{\alpha^{j+1}}{j!}\int_{\mathbb{R}_{+}} W(\vartheta,\vartheta)e^{-\alpha\mu
(\vartheta)}\mu(\vartheta)^{j}d\vartheta
 +\frac{\alpha^{j}}{j-1!}\int_{\mathbb{R}_{+}}e^{-\alpha\mu
(\vartheta)}W(\vartheta,\vartheta)\mu(\vartheta)^{j-1} d\vartheta =o(\alpha)
$$
and from Lemma~\ref{lemma:abelianvariations}, we have, when $\sigma\in[0,1)$
$$
\frac{\alpha^{j+1}}{j!}\int_{\mathbb{R}_{+}}e^{-\alpha\mu
(\vartheta)}\mu(\vartheta)^{j}d\vartheta \sim \frac{\sigma\Gamma(j-\sigma)}{j!}\alpha^{1+\sigma}\ell(\alpha).
$$
If $\sigma=1$,  from Lemma~\ref{lemma:abelianvariations} then $\alpha^2\int_{\mathbb{R}_{+}}e^{-\alpha\mu
(\vartheta)}\mu(\vartheta)d\vartheta \sim \alpha^{2}\ell_1(\alpha)$ and for $j \geq 2$
$$
  \frac{\alpha^{j+1}}{j!}\int_{\mathbb{R}_{+}}e^{-\alpha\mu
(\vartheta)}\mu(\vartheta)^{j}d\vartheta\sim\frac{1}{j(j-1)}\alpha^{2}\ell(\alpha). 
$$
We finally obtain, for $\sigma\in[0,1)$
$
E(N_{\alpha,j})\sim\frac{\sigma\Gamma(j-\sigma)}{j!}\alpha^{1+\sigma
}\ell(\alpha),
$
and for $\sigma=1$,
$
  E(N_{\alpha,1}) \sim \alpha^{2}\ell_1(\alpha)$, and $   E(N_{\alpha,j}) \sim \alpha^{2}/\{j(j-1)\}\ell(\alpha)$, for $ j\geq 2.
$

\subsection{Proof of Proposition \ref{prop:localclustering}}
\label{sec:proofclustering}

For $j\geq 1$, define
\begin{equation}
R_{\alpha j}=\sum_{i}T_{\alpha i}\1{D_{\alpha i=j}}.
\end{equation}
$R_{\alpha j}$ corresponds to the number of triangles having a node of degree $j$ as a vertex, where triangles having $k\leq 3$ degree-$j$ nodes as vertices are counted $k$ times. We therefore have
$$
C_{\alpha,j}^{(\ell)}=\frac{2}{j(j-1)}\frac{R_{\alpha j}}{N_{\alpha,j}}.
$$

The proof for the asymptotic behaviour of the local clustering coefficients $C_{\alpha,j}^{(\ell)}$ is organised as follows. We first derive a convergence result for $E(R_{\alpha j})$. This result is then extended to an almost sure result. The extension requires some additional work as $R_{\alpha j}$ is not monotone, and $\sum_{j\geq k} R_{\alpha k}$ is monotone but not of the same order as $R_{\alpha j}$, hence a proof similar to that for $N_{\alpha j}$ (see Section~\ref{sec:app:asnumbernodes}) cannot be used. The almost sure convergence results for $C_{\alpha,j}^{(\ell)}$ and $\overline C_{\alpha}^{(\ell)}$ then follow from the almost sure convergence result for $R_{\alpha j}$.\medskip

We have%
\[
R_{\alpha j}=\sum_{i}T_{\alpha i}\1{D_{\alpha i=j}}=\frac{1}{2}\sum_{i\neq l\neq k}%
Z_{il}Z_{ik}Z_{lk}\1{\sum_{s}Z_{is}=j \1{\theta_{s}\leq\alpha}}\1{\theta
_{i}\leq\alpha}\1{\theta_{l}\leq\alpha}\1{\theta_{k}\leq\alpha}%
\]
and%
\begin{align*}
  E\left(  R_{\alpha j}\mid M\right) &  =\frac{1}{2}\sum_{i\neq l\neq k}W(\vartheta_{i},\vartheta_{l}%
)W(\vartheta_{i},\vartheta_{k})W(\vartheta_{l},\vartheta_{k})\frac{1}{\left(
j-2\right)  !}\\
&\qquad\qquad\times\sum_{i_{1}\neq i_{2}\ldots\neq i_{j-2}\neq l\neq k}\left[
\prod_{s=1}^{j-2}W(\vartheta_{i},\vartheta_{s})\right]  e^{-\sum_{s\neq
l,k,i_{1},\ldots,i_{j-2}}g_{\alpha,\vartheta_{i}}(\theta_{s},\vartheta_{s})}\\
&  =\frac{1}{2\left(  j-2\right)  !}\sum_{i\neq l\neq k\neq i_{1}\neq
i_{2}\ldots\neq i_{j-2}}W(\vartheta_{i},\vartheta_{l})W(\vartheta
_{i},\vartheta_{k})W(\vartheta_{l},\vartheta_{k})(1-W(\vartheta_i,\vartheta_i))\\
&\qquad\qquad\qquad\qquad\times\left[  \prod_{s=1}%
^{j-2}W(\vartheta_{i},\vartheta_{s})\right]  e^{-\sum_{s\neq l,k,i_{1}%
,\ldots,i_{j-2}}g_{\alpha,\vartheta_{i}}(\theta_{s},\vartheta_{s})}\\
& \qquad +\frac{1}{2(j-3)!}\sum_{i\neq l\neq k\neq i_{1}\neq i_{2}\ldots\neq
i_{j-3}}W(\vartheta_{i},\vartheta_{i})W(\vartheta_{i},\vartheta_{l}%
)W(\vartheta_{i},\vartheta_{k})W(\vartheta_{l},\vartheta_{k})\\
&\qquad\qquad\qquad\qquad\times\prod_{s=1}%
^{j-3}W(\vartheta_{i},\vartheta_{s})e^{-\sum_{s\neq i,l,k,i_{1},\ldots
,i_{j-3}}g_{\alpha,\vartheta_{i}}(\theta_{s},\vartheta_{s})}%
\end{align*}
where $g_{\alpha,x}(\theta,\vartheta)$ is defined in Equation \eqref{eq:defg}.
Applying the Slivnyak-Mecke theorem, we obtain%
\begin{align}
E\left( R_{\alpha j}\right)& =\frac
{\alpha^{j+1}}{2(j-2)!}\int_{\mathbb{R}_{+}^{3}}%
W(x,y)W(x,z)W(y,z)(1-W(x,x))\mu(x)^{j-2}e^{-\alpha\mu(x)}dxdydz\nonumber\\
&  \quad+\frac{\alpha^{j}}{2(j-3)!}\int_{\mathbb{R}_{+}^{3}%
}W(x,y)W(x,z)W(y,z)W(x,x)\mu(x)^{j-3}e^{-\alpha\mu(x)}dxdydz.
\label{eq:Talphaj}
\end{align}
Note that under Assumption \ref{assumpt:1} with $\sigma\in(0,1)$, $\mu(x)>0$ for all $x$. The leading term in the right-handside of Equation~\eqref{eq:Talphaj} is the first term. We have therefore
\begin{align*}
E\left( R_{\alpha j}\right)& \sim \frac
{\alpha^{j+1}}{2(j-2)!}\int_{\mathbb{R}_{+}^{3}}%
L(x)\mu(x)^{j}e^{-\alpha\mu(x)}dxdydz
\end{align*}
where $$L(x)=\frac{(1-W(x,x))\int_{\mathbb{R}_{+}^{2}}W(x,y)W(x,z)W(y,z)dydz}{\mu(x)^{2}}.$$ As $\lim_{x\to\infty}W(x,x)=0$, the condition \eqref{cond:localcluster} implies $\lim_{x\to\infty}L(x)= b$. \smallskip

\paragraph{Case $b>0$.} Assume first that
$b>0$. In this case, $L$ is a slowly varying function by
assumption.\ Therefore, using Lemma~\ref{lemma:abelianvariations2}, we
have, under Assumption \ref{assumpt:1}, for $\sigma\in(0,1)$
\[
\int_{0}^{\infty}L(x)\mu(x)^{j}e^{-\alpha\mu(x)}dx\sim\sigma b\ell
(\alpha)\Gamma(j-\sigma)\alpha^{\sigma-j}.
\]
as $\alpha$ tends to infinity. Hence
\begin{equation}
E\left( R_{\alpha j}\right)  \sim\frac
{b\sigma\Gamma(j-\sigma)}{2(j-2)!}\alpha^{1+\sigma}\ell(\alpha)\label{eq:meanT}
\end{equation}
as $\alpha$ tends to infinity. In order to obtain a convergence in probability, we state the following proposition, whose proof is given in Section~\ref{sec:proofvariancelocalclust} in the Supplementary Material \citep{CaronRousseau2017:supplement} and is similar to that of Proposition~\ref{prop:variancenbnodesj}.
\begin{proposition}
\label{prop:variancelocalclust}
Under  Assumptions \ref{assumpt:1} and \ref{assumpt:2},  with $\sigma\in[0,1]$, slowly varying function $\ell$ and positive scalar $a$ satisfying \eqref{eq:conda},  we have
$$
\var\left(  \sum_{i}T_{\alpha i}\1{D_{\alpha i=j}}\right) =O\{\alpha^{3+2\sigma-2a}\ell_\sigma(\alpha)^2\}\text{ as }\alpha\to\infty,
$$
and for any sequence $\alpha_n$ going to infinity such that $\alpha_{n+1} - \alpha_n = o(\alpha_n)$,
$$
\var\left(  \sum_iT_{\alpha_{n+1} i}\1{D_{\alpha_n i=j}} \1{\sum_{i'}\1{\alpha_n<\theta_{i'}\leq\alpha_{n+1}}Z_{ii'} =1} \right) =O\left(\alpha_n^{3+2\sigma-2a}\ell_\sigma(\alpha_n)^2\right)\text{ as }n\to\infty.
$$
\end{proposition}

We now want to find a subsequence $\alpha_n$ along which the convergence is almost sure. Using Chebyshev's inequality and the first part of Proposition \ref{prop:variancelocalclust}, there exists $n_0\ge 0$ and $C\ge 0$ such that for all $n>n_0$
\begin{align*}
\Pr\left(\left|\frac{R_{\alpha_n j}}{E(R_{\alpha_n j})}-1\right|>\epsilon\right)\le&\frac{C\alpha_n^{3+2\sigma-2a}\ell_\sigma(\alpha_n)^2}{\epsilon^2(\frac
{b\sigma\Gamma(j-\sigma)}{2(j-2)!}\alpha_n^{1+\sigma}\ell(\alpha_n))^2}.
\end{align*}
Now, if Assumption \ref{assumpt:2} is satisfied for a given $a>1/2$, consider the sequence
\begin{equation}\label{eq:alphan}
\alpha_n  = (n \log^2 n)^{1/(2a-1)}
\end{equation}
so that $\sum_n \alpha_n^{1-2a} < +\infty$ and
\begin{align*}
\sum_n\Pr\left(\left|\frac{R_{\alpha_n j}}{E(R_{\alpha_n j})}-1\right|>\epsilon\right)<\infty.
\end{align*}
Therefore, using Borel-Cantelli's lemma we have
\[
R_{\alpha_n j} \sim\frac
{b\sigma\Gamma(j-\sigma)}{2(j-2)!}\alpha_n^{1+\sigma}\ell(\alpha_n)
\]
almost surely as $n\to\infty$.

The goal is now to extend this result to $R_{\alpha j}$, by sandwiching. Let $I_\alpha := \{  i : \, \theta_i \leq \alpha\}$. We have the following upper and lower bounds for $R_{\alpha j}$
\begin{equation}\label{eq:sandwich}
\sum_{i\in I_{\alpha_n} }T_{\alpha_n i}\1{D_{\alpha i=j}} \leq \sum_{i\in I_{\alpha} }T_{\alpha i}\1{D_{\alpha i}=j} \leq \sum_{i\in I_{\alpha_{n+1}} }T_{\alpha_{n+1} i}\1{D_{\alpha i}=j}.
\end{equation}
Considering the upper bound of \eqref{eq:sandwich}:
\begin{align}\label{eq:upper_bound_T}
 \sum_{i\in I_{\alpha_{n+1}} }T_{\alpha_{n+1} i}\1{D_{\alpha i}=j} &\leq  \sum_{i\in I_{\alpha_{n+1}} }T_{\alpha_{n+1} i}\1{D_{\alpha_{n+1} i}=j} +  \sum_{i\in I_{\alpha_{n+1}} }T_{\alpha_{n+1} i}\1{D_{\alpha i}=j}\1{D_{\alpha_{n+1} i}>j}\notag\\
 &\le  R_{\alpha_{n+1}j}  +  \widetilde R_{nj}
 \end{align}
 where
 \begin{equation}\label{eq:Rnj}
\widetilde R_{nj}=\sum_{i\in I_{\alpha_{n+1}} }T_{\alpha_{n+1} i}\1{D_{\alpha_n i}\leq j} \1{ \sum_{i'} \1{\alpha_n < \theta_{i'}\leq \alpha_{n+1}}Z_{ii'}\geq 1}.
 \end{equation}
We can bound the lower bound of \eqref{eq:sandwich} by
  \begin{align}\label{eq:lower_bound_T}
 \sum_{i\in I_{\alpha_n} }T_{\alpha_n i}\1{D_{\alpha i=j}} &\ge \sum_{i\in I_{\alpha_n} }T_{\alpha_n i}\1{D_{\alpha_n i=j}}\1{D_{\alpha i=j}}\notag \\
 &\ge\sum_{i\in I_{\alpha_n} }T_{\alpha_n i}\1{D_{\alpha_n i=j}} - \sum_{i\in I_{\alpha_n} }T_{\alpha_n i}\1{D_{\alpha_n i=j}}\1{D_{\alpha_{n+1} i>j}}\notag\\
 &\geq \sum_{i\in I_{\alpha_n} }T_{\alpha_n i}\1{D_{\alpha_n i=j}} -\sum_{i\in I_{\alpha_{n+1}} }T_{\alpha_{n+1} i}\1{D_{\alpha_n i}\leq j} \1{ \sum_{i'} \1{\alpha_n < \theta_{i'}\leq \alpha_{n+1}}Z_{ii'}\geq 1}\notag\\
 &= R_{\alpha_n j} - \widetilde R_{nj}.
\end{align}
The following Lemma, proved in Section~\ref{sec:app:asymptT} of the Supplementary Material \citep{CaronRousseau2017:supplement}, provides an asymptotic bound for the remainder term $\widetilde R_{nj}$.
\begin{lemma}\label{lemma:asymptT}
Let $\widetilde R_{nj}$ be defined as in Equation~\eqref{eq:Rnj}. If Assumptions \ref{assumpt:1} and \ref{assumpt:2} hold with $\sigma\in(0,1)$ and slowly varying function $\ell$, and condition~\eqref{cond:localcluster} is satisfied with $b>0$, we have
$$\widetilde R_{nj} = o(\alpha_n^{1+\sigma}\ell(\alpha_n))$$ almost surely as $\alpha$ tends to infinity.
\end{lemma}
Combining Lemma~\ref{lemma:asymptT} with the inequalities~\eqref{eq:sandwich},~\eqref{eq:upper_bound_T} and \eqref{eq:lower_bound_T}, and the fact that $R_{\alpha_n j}\sim R_{\alpha_{n+1} j}\asymp \alpha_n^{1+\sigma}\ell(\alpha_n)$ almost surely as $n\to\infty$, we obtain by sandwiching
$$
R_{\alpha j}\sim\frac
{b\sigma\Gamma(j-\sigma)}{2(j-2)!}\alpha^{1+\sigma}\ell(\alpha)\text{ almost surely as $\alpha$ tends to infinity.}
$$
 Recalling that $N_{\alpha,j}\sim\frac{\sigma\Gamma(j-\sigma
)}{j!}\alpha^{1+\sigma}\ell(\alpha)$ almost surely, we have, for any $j\geq1$
\[
C_{\alpha,j}^{(\ell)}=\frac{2R_{\alpha j}}{j(j-1)N_{\alpha,j}}\rightarrow b\text{ almost surely as $\alpha$ tends to infinity.}
\]
 Finally, as $\frac{N_{\alpha,j}%
}{N_{\alpha}-N_{\alpha,1}}$ converges to a constant $\pi_{j}\in(0,1)$ almost surely for
any $j$, we have, using Toeplitz's lemma
\[
\overline{C}_{\alpha}^{(\ell)}=\frac{1}{N_{\alpha}-N_{\alpha,1}}\sum_{j\geq
2}N_{\alpha,j}C_{\alpha,j}^{(\ell)}\rightarrow b
\]
almost surely as $\alpha$ tends to infinity.

\paragraph{Case $b=0$. } In the case $L(x)\rightarrow0$, Lemma \ref{lemma:abelianvariations2} gives $\int%
_{0}^{\infty}L(x)\mu(x)^{j}e^{-\alpha\mu(x)}dx=o(\alpha^{\sigma-j})$ hence, by Markov inequality
\[
R_{\alpha j} =o(\alpha^{1+\sigma
}\ell(\alpha))
\]
and $C_{\alpha j}^{(\ell)}\rightarrow 0$ in probability as $\alpha$ tends to infinity.

\section{Technical Lemma} \label{sec:techlemma}

The proof of the following lemma follows similarly to the proof of Proposition 2 in \citep{Gnedin2007}, and is omitted here.

\begin{lemma}
\label{lemma:almostsure}Let $(X_{t})_{t\geq0}$ be some positive monotone
increasing stochastic process with finite first moment $(E(X_{t}))_{t\geq0}\in
RV_{\gamma}$ where $\gamma\geq0$ (see Definition \ref{def:RV}). Assume%
\[
\var(X_{t})=O\{ t^{-a}E(X_{t})^{2}\}%
\]
for some $a>0$. Then
\[
\frac{X_{t}}{E(X_{t})}\rightarrow1\text{ almost surely as
}t\rightarrow\infty.
\]

\end{lemma}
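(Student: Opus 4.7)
The plan is a two-step argument: first prove almost sure convergence along a cleverly chosen deterministic subsequence $(t_n)$ via a Chebyshev/Borel--Cantelli bound exploiting the hypothesis on the variance; second, upgrade to convergence along the full continuous parameter by sandwiching $X_t$ between $X_{t_n}$ and $X_{t_{n+1}}$ using the monotonicity of $X_t$, and controlling the resulting ratio of means through the regular variation of $E(X_t)$.

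More concretely, I would pick $t_n = n^\beta$ with $\beta > 1/a$. For any $\varepsilon>0$, Chebyshev's inequality gives
\[
\Pr\bigl\{|X_{t_n}-E(X_{t_n})|>\varepsilon E(X_{t_n})\bigr\}\leq \frac{\var(X_{t_n})}{\varepsilon^2 E(X_{t_n})^2}=O(n^{-\beta a}),
\]
and since $\beta a>1$ the series converges. The Borel--Cantelli lemma then yields $X_{t_n}/E(X_{t_n})\to 1$ almost surely as $n\to\infty$.

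For the second step, fix $\omega$ in the probability-one event on which the subsequence converges. For any $t\geq t_1$ there exists $n$ with $t_n\leq t<t_{n+1}$. Monotonicity of $X_\cdot$ (which also forces $E(X_\cdot)$ to be non-decreasing) gives
\[
\frac{X_{t_n}}{E(X_{t_{n+1}})}\leq \frac{X_t}{E(X_t)}\leq \frac{X_{t_{n+1}}}{E(X_{t_n})}.
\]
Rewriting, $X_{t_{n+1}}/E(X_{t_n})=\{X_{t_{n+1}}/E(X_{t_{n+1}})\}\cdot\{E(X_{t_{n+1}})/E(X_{t_n})\}$, and similarly for the lower bound. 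The first factor tends to $1$ by Step~1. For the ratio of means, since $t_{n+1}/t_n=\{(n+1)/n\}^\beta\to 1$ and $E(X_t)\in RV_\gamma$, we may write $E(X_t)=t^\gamma L(t)$ with $L$ slowly varying and use the uniform convergence theorem for slowly varying functions to conclude $E(X_{t_{n+1}})/E(X_{t_n})\to 1$. Sandwiching then delivers $X_t/E(X_t)\to 1$ almost surely.

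There is no real obstacle here; the only delicate point is making sure the subsequence is both sparse enough for the Borel--Cantelli sum to converge (requiring $\beta>1/a$) and dense enough that the gaps close in the regular variation sense (guaranteed by $t_{n+1}/t_n\to 1$ for any power sequence). A power-type subsequence $t_n=n^\beta$ achieves both simultaneously, which is why I prefer it over a geometric spacing; the latter would need an additional diagonalization over $t_n=q^n$ with $q\downarrow 1$.
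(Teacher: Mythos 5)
Your proposal is correct and follows essentially the same route as the paper's proof: Chebyshev plus Borel--Cantelli along a polynomial subsequence (the paper fixes $t_m=m^{2/a}$ where you allow any $\beta>1/a$), then sandwiching via monotonicity and closing the gap with the regular variation of $E(X_t)$ (the paper cites its Proposition~\ref{prop:rv1} where you invoke the uniform convergence theorem for slowly varying functions). No substantive difference.
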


The following lemma is a compilation of results from Propositions 17, 18 and 19 in~\cite{Gnedin2007}.
\begin{lemma}
\label{lemma:abelianvariations}Let $\mu:\mathbb{R}_{+}\rightarrow\mathbb{R}_{+}$
be a positive, right-continuous and monotone decreasing function  with $\int_0^\infty \mu(x)dx<\infty$ and generalised inverse $\mu^{-1}(x)=\inf\{ y> 0\mid f(y)\leq x\}$ satisfying
\begin{equation}
\mu^{-1}(x)=x^{-\sigma}\ell(1/x)\label{eq:RVinlemma}%
\end{equation}
where $\sigma\in\lbrack0,1]$ and $\ell$ is a slowly varying function. Consider
$$
g_{0}(t)=\int_{0}^{\infty}(1-e^{-t\mu(x)})dx, \quad g_{r}(t)=\int_{0}^{\infty}e^{-t\mu(x)}\mu(x)^{r}dx.\quad r \geq 1.
$$
Then, for any $\sigma\in\lbrack0,1)$%
\[
g_{0}(t)\sim\Gamma(1-\sigma)t^{\sigma}\ell(t)\text{ as }t\rightarrow\infty
\]
and, for $r\geq 1$,
\[
\left\{
\begin{array}
[c]{ll}%
g_{r}(t)\sim t^{\sigma-r}\ell(t)\sigma\Gamma(r-\sigma) & \text{if }\sigma\in(0,1)\\
g_{r}(t)=o\{t^{\sigma-r}\ell(t)\} & \text{if }\sigma=0
\end{array}
\right.
\]
as $t\rightarrow\infty.$ For $\sigma=1$, as $t\rightarrow\infty$,
\begin{equation*}
g_0(t) \sim t\ell_1(t), \quad
g_1(t)\sim \ell_1(t), \quad g_r(t)\sim t^{1-r}\ell(t)\Gamma(r-1)
\end{equation*}
 where $\ell_1(t)=\int_t^\infty x^{-1} \ell(x)dx$. Note that $\ell(t)=o(\ell_1(t))$ hence $g_r(t)=o\{t^{1-r}\ell_1(t)\}$.
\end{lemma}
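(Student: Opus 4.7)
The plan is to reduce both $g_0$ and $g_r$ to Laplace(-Stieltjes) transforms of the regularly varying function $f^{-1}$ and then apply standard Abelian/Tauberian theory for regularly varying functions.

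First, I would rewrite $g_0$ using the layer-cake identity $1-e^{-tf(x)}=\int_0^{tf(x)}e^{-u}du$ and Fubini, together with the fact that $\{x:f(x)>u/t\}=[0,f^{-1}(u/t))$ since $f$ is right-continuous and non-increasing. This yields the key representation
\begin{equation*}
g_0(t)=\int_0^\infty e^{-u}\,f^{-1}(u/t)\,du= t\int_0^\infty e^{-ty}f^{-1}(y)\,dy,
\end{equation*}
which is a Laplace transform of $f^{-1}$. For $\sigma\in[0,1)$, I would substitute $u=ty$ and use the uniform convergence theorem for slowly varying functions to get $f^{-1}(u/t)/\{t^\sigma\ell(t)\}\to u^{-\sigma}$ pointwise, and Potter's bounds plus the exponential tail to apply dominated convergence, producing $g_0(t)\sim t^\sigma\ell(t)\int_0^\infty e^{-u}u^{-\sigma}du=t^\sigma\ell(t)\Gamma(1-\sigma)$.

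The case $\sigma=1$ is the first obstacle, since $e^{-u}u^{-\sigma}$ ceases to be integrable at $0$ and dominated convergence cannot be applied directly. Here I would use the substitutions $s=1/y$ followed by $w=t/s$ to recast $t\int_0^{y_0}e^{-ty}y^{-1}\ell(1/y)dy$ as $t\int_0^{ty_0}e^{-w}w^{-1}\ell(t/w)dw$, and split at $w=1$. The tail $w\geq 1$ is controlled by the exponential and is $o\{t\ell_1(t)\}$ using slow variation and the Karamata fact $\ell(t)=o(\ell_1(t))$; the piece on $w\in(0,1)$, after reversing to the variable $u=t/w$, is precisely $t\int_t^\infty\ell(u)/u\,du=t\ell_1(t)(1+o(1))$. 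The exponentially small contribution from $y\geq y_0$ is negligible.

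For $g_r$ with $r\geq 1$, I would observe that differentiation under the integral is justified (for $t\geq t_0>0$, the integrand is dominated by $f(x)^r e^{-t_0 f(x)}$, which is integrable because $f^{-1}$ is) and gives $g_r(t)=(-1)^{r+1}g_0^{(r)}(t)$, with $g_r$ positive and monotone decreasing in $t$. Writing $g_0(t)=\int_0^t g_1(s)ds$ and $g_r(t)=\int_t^\infty g_{r+1}(s)ds$ (using $\lim_{t\to\infty}g_r(t)=0$ by dominated convergence), Karamata's monotone density theorem transfers the asymptotic for $g_0$ to $g_1$ and then iterates. For $\sigma\in(0,1)$, each step multiplies by the exponent and gives the constant recursion $\sigma\Gamma(1-\sigma)\to\sigma(1-\sigma)\Gamma(1-\sigma)=\sigma\Gamma(2-\sigma)\to\cdots\to\sigma\Gamma(r-\sigma)$, matching the claim. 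For $\sigma=0$ the theorem yields an $o$-estimate at each iteration. For $\sigma=1$, the first step from $g_0(t)\sim t\ell_1(t)$ (index $1$) gives $g_1(t)\sim\ell_1(t)$ via standard monotone density; the subsequent transitions, where the limiting function is slowly varying (index $0$), are the second main obstacle and require a de Haan-type refinement, or equivalently the direct identity $\ell_1'(t)=-\ell(t)/t$ combined with the complete monotonicity of $t\mapsto g_1(t)$ as a Laplace transform of a positive measure on $\mathbb R_+$, yielding $g_r(t)\sim\Gamma(r-1)t^{1-r}\ell(t)$ for $r\geq 2$.
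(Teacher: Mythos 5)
Your proposal is correct in substance, but it takes a genuinely different route from the paper: the paper gives no self-contained proof of this lemma at all, stating only that it follows from Propositions 17--19 of Gnedin et al.\ (2007) ``using Tauberian arguments'' --- i.e.\ it identifies $g_0(t)=\int_0^\infty e^{-u}f^{-1}(u/t)\,du$ and the $g_r$ with the Poissonized occupancy functionals for the measure whose tail is $\vec\nu(x)=f^{-1}(x)$, and imports the asymptotics wholesale. You reconstruct the argument from scratch: the same layer-cake representation as the starting point, then dominated convergence with Potter bounds for $\sigma\in[0,1)$, a hands-on treatment of the boundary case $\sigma=1$ via the splitting at $w=1$ and the de Haan fact $\ell(t)=o\{\ell_1(t)\}$, and iterated monotone-density theorems for the $g_r$. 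What your approach buys is self-containedness and transparency about where each hypothesis (monotonicity of $f$, integrability, the index $\sigma$) enters; what the paper's citation buys is brevity and the reassurance that the boundary cases have been checked in the occupancy-problem literature. Your recursion $\sigma\Gamma(1-\sigma)\to\sigma\Gamma(2-\sigma)\to\cdots$ and the identity $g_r=(-1)^{r+1}g_0^{(r)}$ are both correct.

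Two steps of your sketch are thinner than the rest and deserve to be spelled out. First, for $\sigma=0$ and $r\ge 2$ the monotone density theorem does not literally apply (you only have an $o$-bound on $g_1$, not regular variation); the fix is the elementary sandwich $g_{r+1}(t)\le \frac{2}{t}\,g_r(t/2)$, which propagates the $o$-estimate. Second, for $\sigma=1$ and $r\ge2$, ``complete monotonicity of $g_1$ plus $\ell_1'(t)=-\ell(t)/t$'' does not by itself give derivative asymptotics (one cannot differentiate $g_1(t)\sim\ell_1(t)$ term by term). The clean completion is to integrate by parts against the pushforward measure to get
\begin{equation*}
g_r(t)=r\int_0^\infty e^{-tu}u^{r-1}f^{-1}(u)\,du-t\int_0^\infty e^{-tu}u^{r}f^{-1}(u)\,du ,
\end{equation*}
apply the Abelian theorem for Laplace transforms to each term (for $\sigma=1$, $r\ge2$ these behave as $r\Gamma(r-1)t^{1-r}\ell(t)$ and $\Gamma(r)t^{1-r}\ell(t)$ respectively), and note that the leading constants differ by $\Gamma(r-1)>0$, so the difference of asymptotics is the asymptotic of the difference. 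This same identity also handles $\sigma\in(0,1)$ (constant $r\Gamma(r-\sigma)-\Gamma(r+1-\sigma)=\sigma\Gamma(r-\sigma)$) and $\sigma=0$ (the constants cancel, giving the $o$-estimate directly), so you could use it to replace the iterated monotone-density step entirely.
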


\begin{lemma}
\label{lemma:abelianvariationsv2}Let $\mu:\mathbb{R}_{+}\rightarrow\mathbb{R}_{+}$
be a positive, monotone decreasing function, and $u:\mathbb R_+\rightarrow [0,1]$ a positive and integrable function with $\int_0^\infty u(x)dx<\infty$. Consider
$
h_{0}(t)=\int_{0}^{\infty}u(x)(1-e^{-t\mu(x)})dx
$
and for $r\geq 1$
$
h_{r}(t)=\int_{0}^{\infty}u(x)e^{-t\mu(x)}\mu(x)^{r}dx.
$
\newline Then, as $t\rightarrow\infty.$
$$
h_{0}(t)\sim \int_0^\infty u(x)dx, \quad
h_{r}(t)=o(t^{-r}), \quad r\geq 1.
$$

\end{lemma}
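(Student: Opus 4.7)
The statement reduces to two direct applications of dominated convergence, since the only ingredient one needs is that the integrand converges pointwise and is bounded by a fixed integrable majorant, namely $W(x,x)$.

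For the first claim, observe that since $f(x)>0$ for every $x\in\mathbb R_+$, we have $1-e^{-tf(x)}\to 1$ pointwise as $t\to\infty$. Moreover, $0\leq W(x,x)(1-e^{-tf(x)})\leq W(x,x)$, and $W(x,x)$ is integrable by hypothesis. Hence by the Lebesgue dominated convergence theorem,
\[
h_0(t)=\int_0^\infty W(x,x)(1-e^{-tf(x)})\,dx \longrightarrow \int_0^\infty W(x,x)\,dx,
\]
as $t\to\infty$, which is the required asymptotic equivalence (the limit is finite and nonzero by the integrability assumption on $W(x,x)$, and we are just asserting the ratio tends to $1$).

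For the second claim, I rewrite
\[
t^r h_r(t)=\int_0^\infty W(x,x)\,\bigl(tf(x)\bigr)^r e^{-tf(x)}\,dx
\]
and use the elementary fact that the function $u\mapsto u^r e^{-u}$ on $[0,\infty)$ is bounded above by $C_r=r^r e^{-r}$ and tends to $0$ as $u\to\infty$. Since $f(x)>0$ pointwise, $tf(x)\to\infty$ as $t\to\infty$, hence the integrand converges pointwise to $0$. It is also dominated by $C_r W(x,x)$, which is integrable. Another application of dominated convergence gives $t^r h_r(t)\to 0$, i.e.\ $h_r(t)=o(t^{-r})$.

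There is essentially no obstacle: the only thing to note is that positivity of $f$ is used in two places, first to push $1-e^{-tf(x)}$ to $1$ and second to push $tf(x)$ to infinity so that $(tf(x))^r e^{-tf(x)}\to 0$. The integrability of $W(x,x)$ supplies the dominating function in both cases. No properties of $f$ beyond strict positivity (and measurability) are required, and no regular variation is needed here, in contrast to Lemma~\ref{lemma:abelianvariations}.
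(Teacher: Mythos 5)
Your proof is correct, and for the second claim it takes a genuinely different (and more elementary) route than the paper. The treatment of $h_0$ is identical: dominated convergence with majorant $W(x,x)$, using positivity of $f$ so that $1-e^{-tf(x)}\to 1$ pointwise. For $h_r$, however, the paper does not argue directly: it observes that $h_1(t)=-\frac{d}{dt}h_0(t)$ is the monotone derivative of a function tending to a positive constant (hence slowly varying), invokes its Proposition~\ref{prop:RVderivative} to get $t\,h_1(t)\to 0$, and then ``proceeds by induction'' for $r\geq 2$ --- a step it leaves largely implicit. You instead write $t^r h_r(t)=\int_0^\infty W(x,x)\,(tf(x))^r e^{-tf(x)}\,dx$, bound $u^r e^{-u}$ uniformly by $r^r e^{-r}$, and apply dominated convergence once, handling all $r$ simultaneously. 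Your argument buys simplicity and self-containedness: it needs no regular-variation machinery, no monotonicity-of-derivative hypothesis, and no induction, while the paper's route has the (minor) advantage of recycling a proposition it has already stated for other purposes. One cosmetic remark: the limit $\int_0^\infty W(x,x)\,dx$ is finite by integrability but nonzero by the \emph{positivity} of $W$, not by integrability as you wrote; this does not affect the validity of the proof.
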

\begin{proof}
$h_{0}(t)\rightarrow \int_0^\infty u(x)dx$ by dominated convergence. Using Proposition \ref{prop:RVderivative},
\[
\frac{t h_1(t)}{\int_0^\infty u(x)dx}\rightarrow 0
\]
Proceed by induction for the final result.
\end{proof}

\begin{lemma}\label{lemma:abelianvariationsv3}
Let $\mu $ be a  non-negative, non-increasing function on $\mathbb R_+$, with $\int_0^{\infty}\mu(x)dx<\infty$ and such that its generalised inverse
 $ \mu^{-1} $ verifies $ \mu^{-1}(x)\sim x^{-\sigma}\ell(1/x)$ as $x\rightarrow 0$ with $\sigma \in [0,1]$ and $\ell$ a slowly varying function. Then as $t\rightarrow\infty$, for all $r> \sigma$
$$
 \int_{\mathbb R_+} \mu(x)^{r} e^{-t \mu(x) } dx =O\{ t^{\sigma-r} \ell(t)\}
$$
\end{lemma}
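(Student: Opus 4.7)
The plan is to reduce the estimate to a Karamata-type calculation on the slowly varying function $\ell$ by changing variables. Writing $I_r(t) = t^r g_r(t)$ with $g_r(t) = \int_0^\infty \mu(x)^r e^{-t\mu(x)}dx$, I would substitute $u = \mu(x)$; since $\mu$ is non-increasing with generalized inverse $\mu^{-1}$, this turns $g_r(t)$ into a Stieltjes integral against $d(-\mu^{-1}(u))$. An integration by parts then yields
\begin{equation*}
g_r(t) = \int_0^{\mu(0)} \mu^{-1}(u)\,(ru^{r-1} - tu^r)\, e^{-tu}\, du.
\end{equation*}
The boundary term at $u = \mu(0)$ vanishes because $\mu^{-1}(\mu(0))=0$, and the one at $u = 0$ vanishes because $\mu^{-1}(u)u^r \sim \ell(1/u)\,u^{r-\sigma}$, which tends to zero precisely under the hypothesis $r > \sigma$. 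The rescaling $v = tu$ then gives
\begin{equation*}
I_r(t) = \int_0^{t\mu(0)} \mu^{-1}(v/t)\,(r - v)\,v^{r-1}\,e^{-v}\,dv.
\end{equation*}

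Next I would bring in Potter's inequality: for any $\delta \in (0, r - \sigma)$ there exist constants $C, t_\delta$ such that $\ell(t/v)/\ell(t) \leq C\max\{v^{-\delta}, v^\delta\}$ whenever $t/v \geq t_\delta$. Combined with $\mu^{-1}(v/t) \sim t^\sigma v^{-\sigma} \ell(t/v)$ from Assumption~\ref{assumpt:1}, this yields, for $t$ large,
\begin{equation*}
\mu^{-1}(v/t) \leq C' t^\sigma \ell(t)\, v^{-\sigma} \max\{v^{-\delta}, v^\delta\},
\end{equation*}
so that
\begin{equation*}
|I_r(t)| \leq C'' t^\sigma \ell(t) \int_0^\infty v^{r-1-\sigma}(r + v)\, e^{-v} \max\{v^{-\delta}, v^\delta\}\,dv.
\end{equation*}
The last integral is finite exactly when $r - \sigma - \delta > 0$, giving $I_r(t) = O(t^\sigma \ell(t))$. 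The range where $v$ is comparable to the upper endpoint $t\mu(0)$ (so $t/v$ is not large and Potter's inequality does not directly apply) is dispatched by monotonicity of $\mu^{-1}$ combined with the exponential factor $e^{-v}$.

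For the exceptional case $r = \sigma = 1$, I would appeal directly to Lemma~\ref{lemma:abelianvariations}, which asserts $g_1(t) \sim \ell_1(t)$. Since $\ell_1$ is non-increasing in $t$ and finite for $t > 0$ (as forced by $\int_0^\infty \mu(x)\,dx < \infty$ under Assumption~\ref{assumpt:1} with $\sigma = 1$), it is bounded on $[t_0, \infty)$, whence $I_1(t) = t\, g_1(t) = O(t)$.

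The principal obstacle is the uniform handling of the slowly varying factor $\ell(t/v)$ across the whole integration range: near $v = 0$ one needs the condition $r > \sigma$ to ensure integrability, while the large-$v$ tail must be controlled separately via monotonicity of $\mu^{-1}$ and exponential decay. The change of variables itself, although involving a generalized inverse that need not be absolutely continuous, is routine via Lebesgue-Stieltjes integration by parts.
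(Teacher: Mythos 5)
Your argument is correct. It shares the paper's essential strategy -- transfer the integral onto the generalized inverse $\mu^{-1}$, where the regular-variation hypothesis lives, and then control the slowly varying factor uniformly with Potter's bound, with the same constraint $\delta<r-\sigma$ governing convergence -- but the technical execution differs. The paper substitutes $x=U(y)$ with $U(y)=\mu^{-1}(1/y)$ and bounds the resulting Stieltjes integral $\int_0^\infty y^{-r}e^{-t/y}\,dU(y)$ by a dyadic block decomposition, exploiting the unimodality of $y\mapsto y^{-r}e^{-t/y}$ (increasing on $[0,t/r]$, decreasing beyond) to bound each block by an endpoint value times an increment of $U$, and then summing a geometric series after Potter. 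You instead integrate by parts to trade the Stieltjes measure $d(-\mu^{-1})$ for the Lebesgue integral $\int\mu^{-1}(u)\phi'(u)\,du$, rescale, and apply Potter pointwise under the integral with an explicit dominating function. Your route buys a cleaner final estimate (a single convergent integral rather than a series) at the cost of having to verify the boundary terms -- which you do correctly, and which is where the hypothesis $r>\sigma$ visibly enters -- and of having to treat separately the range $v\asymp t$ where Potter and the small-argument asymptotic for $\mu^{-1}$ do not apply; your dispatch of that range (boundedness of $\mu^{-1}$ at a fixed positive argument plus the exponentially small tail of $\int_{c_0t}^\infty v^{r-1}(r+v)e^{-v}\,dv$) is sound since $e^{-c_0t/2}=o\{t^{\sigma}\ell(t)\}$ for any slowly varying $\ell$. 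For the exceptional case $r=\sigma=1$ the paper is more elementary: it simply uses $t\mu(x)e^{-t\mu(x)}\leq t\mu(x)$ and integrability of $\mu$, whereas you route through Lemma~\ref{lemma:abelianvariations}; both are valid (there is no circularity, as that lemma is independent of this one), but the direct bound avoids any regularity hypotheses on $\mu$ beyond integrability.
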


\begin{proof}
Let $r>\sigma$. Let $U(y)=\mu^{-1}(1/y)$. $U$ is non-negative, non-decreasing, with $U(y)\sim y^{\sigma}\ell(y)$ as $y\rightarrow\infty$. Consider the change of variable $x=U(y)$, one obtains
\begin{align*}
\int_0^\infty \{ \mu(x) \}^{r} e^{-t \mu(x) } dx=\int_0^\infty y^{-r} e^{-t/y }dU(y)
\end{align*}

We follow part of the proof in \citep[p.37]{Bingham1987}. Note that $y\rightarrow y^{-r}\exp(-t/y)$ is monotone increasing on $[0,t/r]$ and monotone decreasing on $[t/r,\infty)$.
\begin{align*}
\int_{0}^{\infty}y^{-r}e^{-t/y}dU(y)& =\left\{  \int_{0}^{t/r}+\sum_{n=1}^{\infty}\int_{2^{n-1} t/r}^{2^{n}t/r}\right\}  y^{-r}e^{-t/y}dU(y)\\
& \leq  t^{-r}e^{-r}r^r U(t/r)+t^{-r}r^r\sum_{n=1}^{\infty}2^{-r(n-1)}U\left(2^{n}t/r\right)  \\
& \leq  2t^{\sigma-r}e^{-r}r^r \ell(t/r)+2t^{-r}r^r\sum_{n=1}^{\infty}2^{-r(n-1)}(2^{n}t/r)^\sigma \ell\left(
2^{n}t/r\right)   \\
&\leq  2t^{\sigma-r}e^{-r}r^r \ell(t/r)+2^{r+1}t^{\sigma-r}r^{r-\sigma}\sum_{n=1}^{\infty}2^{-n(r-\sigma)} \ell\left(
2^{n}t/r\right)
\end{align*}

for $t$ large, using the regular variation property of $U$. Using Potter's bound~\citep[Theorem 1.5.6]{Bingham1987}, we have, for any $\delta>0$ and for $t$ large
\[
 \ell(2^{n}t/r)\leq 2\ell(t)\max (1,2^{n\delta}/r^\delta).
\]
Hence, for $t$ large,
\[
\int_{0}^{\infty}y^{-r}e^{-t/y}dU(y)\lesssim t^{\sigma-r} \ell(t) \left (1+\sum_{n=1}^{\infty}2^{-n(r-\sigma)}\max (r^\delta,2^{n\delta})\right )
\]
Taking $0<\delta<\frac{r-\sigma}{2}$, the series in the right handside converges.
\end{proof}

The next lemma is a slight variation of Lemma \ref{lemma:abelianvariations}, with the addition of a slowly varying function in the integrals. Note that the case $\sigma=0$ and $\ell$ tends to a constant is not covered.

\begin{lemma}
\label{lemma:abelianvariations2}Let $f:\mathbb{R}_{+}\rightarrow\mathbb{R}_{+}$
be a positive, right-continuous and monotone decreasing function  with $\int_0^\infty f(x)dx<\infty$ and generalised inverse $f^{-1}(x)=\inf\{ y> 0\mid f(y)\leq x\}$ satisfying
\begin{equation}
f^{-1}(x)=x^{-\sigma}\ell(1/x)\label{eq:RVinlemma}%
\end{equation}
where $\sigma\in[0,1]$ and $\ell$ is a slowly varying function, with $\lim_{t\to\infty}\ell(t)=\infty$ if $\sigma=0$. Consider
$$
\widetilde g_{0}(t)=\int_{0}^{\infty}(1-e^{-tf(x)})L(x)dx
$$
and for $r\geq 1$
$$
\widetilde g_{r}(t)=\int_{0}^{\infty}e^{-tf(x)}f(x)^{r}L(x)dx.
$$
where $L:\mathbb R_+\rightarrow (0,\infty)$ is a locally integrable function with $\lim_{t\to\infty}L(t)= b\in[0,\infty)$.
\newline
 Then, for any $\sigma\in\lbrack0,1)$%

\[
\left\{
\begin{array}
[c]{ll}%
\widetilde g_{0}(t)\sim b\Gamma(1-\sigma)t^{\sigma}\ell(t) & \text{if }b>0\\
\widetilde g_{0}(t)=o(t^{\sigma}\ell(t)) & \text{if }b=0
\end{array}
\right.
\]

and, for $r\geq 1$,
\[
\left\{
\begin{array}
[c]{ll}%
\widetilde g_{r}(t)\sim b t^{\sigma-r}\ell(t)\sigma\Gamma(r-\sigma) & \text{if }\sigma\in(0,1),b>0\\
\widetilde g_{r}(t)=o\{t^{\sigma-r}\ell(t)\} & \text{if }\sigma=0\text{ or }b=0
\end{array}
\right.
\]
as $t\rightarrow\infty.$ For $\sigma=1$, $b>0$,  as $t\rightarrow\infty$,
\begin{equation*}
\widetilde g_0(t) \sim bt\ell_1(t), \quad
\widetilde g_1(t)\sim b\ell_1(t), \quad \widetilde g_r(t)\sim bt^{1-r}\ell(t)\Gamma(r-1)
\end{equation*}
and
 where $\ell_1(t)=\int_t^\infty x^{-1} \ell(x)dx$. Note that $\ell(t)=o(\ell_1(t))$ hence $\widetilde g_r(t)=o\{t^{1-r}\ell_1(t)\}$.

\end{lemma}
\begin{proof}
Let $g_{0}(t)=\int_{0}^{\infty}(1-e^{-tf(x)})dx$. Let $\ell_1(t)=\int_t^\infty x^{-1} \ell(x)dx$ and $\ell_\sigma(t)=\Gamma(1-\sigma)\ell(t)$ if $\sigma\in[0,1)$. Using Lemma \ref{lemma:abelianvariations}, we have $g_{0}(t)\sim t^{\sigma}\ell_\sigma(t)$ as $t\to\infty$, and in particular $g_{0}(t)\to\infty$.
By dominated convergence, for any $x_0>0$ $\int_0^{x_0} (1-e^{-tf(x)})L(x)dx\to \int_0^{x_0} L(x)dx<\infty$ hence
$\widetilde g_{0}(t)\sim \int_{x_0}^{\infty} (1-e^{-tf(x)})L(x)dx$ as $t\to\infty$.\\
Let $\epsilon>0$. There is $x_0$ such that for all $x\geq x_0$, $|L(x)-b|\leq  \epsilon$ and so
$$
(b-\epsilon)\int_{x_0}^{\infty} (1-e^{-tf(x)})dx\leq  \int_{x_0}^{\infty} (1-e^{-tf(x)})L(x)dx \leq (b+\epsilon)\int_{x_0}^{\infty} (1-e^{-tf(x)})dx.
$$
Hence by sandwiching
$$
\lim_{t\to\infty}\frac{\widetilde g_0(t)}{t^{\sigma}\ell_\sigma(t)}=\lim_{t\to\infty}\frac{\int_{x_0}^{\infty} (1-e^{-tf(x)})L(x)dx}{t^{\sigma}\ell_\sigma(t)}\in(b-\epsilon,b+\epsilon).
$$
As this is true for any $\epsilon>0$, we obtain $\widetilde g_{0}(t)\sim bt^{\sigma}\ell_\sigma(t)\text{ as }t\rightarrow\infty$ if $b>0$ and $\widetilde g_{0}(t)=o(t^{\sigma}\ell_\sigma(t))$ if $b=0$ . The asymptotic results for $\widetilde g_r(t)$ then follow from Proposition \ref{prop:RVderivative}.
\end{proof}

\medskip
The following is a corollary of \cite[Theorem 2.1]{Willmot1990}.
\begin{corollary}\cite[Theorem 2.1]{Willmot1990}. \label{cor:willmot}
Assume that $$f(x)\sim \ell(x)x^\alpha e^{-\beta x}$$
where $\ell$ is a slowly varying, locally bounded function on $(0,\infty)$, $\beta\geq 0$ and $\alpha\in \mathbb R$, or $\alpha<-1$ and $\beta=0$. Then, as $n\to\infty$
\begin{equation}
\int_0^\infty \frac{(\lambda x)^ne^{-\lambda x}}{n!}f(x)dx\sim \frac{\ell(n)}{(\lambda+\beta)^{\alpha+1}}\left ( \frac{\lambda}{\lambda+\beta} \right )^n n^\alpha
\label{eq:willmot1}
\end{equation}
and
\begin{equation}
\int_0^\infty \frac{(\lambda x)^ne^{-\lambda x}}{n!}u(x)f(x)dx= o\left ( \frac{\ell(n)}{(\lambda+\beta)^{\alpha+1}}\left ( \frac{\lambda}{\lambda+\beta} \right )^n n^\alpha\right )
\label{eq:willmot2}
\end{equation}
for any locally bounded function $u$ vanishing at infinity.
\end{corollary}
\begin{proof}
Equation \eqref{eq:willmot1} is proved in \cite[Theorem 2.1]{Willmot1990}. For any $x_0>0$, we have $\int_0^\infty \frac{(\lambda x)^ne^{-\lambda x}}{n!}u(x)f(x)dx\sim \int_{x_0}^\infty \frac{(\lambda x)^ne^{-\lambda x}}{n!}u(x)f(x)dx$. For any $\epsilon>0$, there is $x_0$ such that $u(x)<\epsilon$ for all $x>x_0$, hence
$$\int_{x_0}^\infty \frac{(\lambda x)^ne^{-\lambda x}}{n!}u(x)f(x)dx\leq \epsilon \int_0^\infty \frac{(\lambda x)^ne^{-\lambda x}}{n!}f(x)dx$$
and \eqref{eq:willmot2} follows from \eqref{eq:willmot1} by sandwiching.
\end{proof}

The following lemma is useful to bound the variance and for the proof of the central limit theorem.
\begin{lemma}\label{lemma:boundintnu}
Assume the functions $\mu$ and $\nu$ satisfy Assumptions~\ref{assumpt:1} and~\ref{assumpt:2}, for some $\sigma\in[0,1]$, slowly varying function $\ell$ and some $a>\min(1/2,\sigma)$ if $\sigma<1$ and $a=1$ if $\sigma=1$. Then
$$
\int_{\mathbb{R}_{+}^{2}}  \nu(x,y)   e^{-\alpha\mu(x)-\alpha\mu(y)+\alpha\nu(x,y)} dxdy=O\left (\alpha^{2\sigma-2a}\ell_\sigma^2(\alpha)\right)
$$
where $\ell_\sigma$ is defined in Equation~\eqref{eq:defellsigma}. If $a=1$ and $\sigma=0$ we have the stronger result
$$
\int_{\mathbb{R}_{+}^{2}}  \nu(x,y)   e^{-\alpha\mu(x)-\alpha\mu(y)+\alpha\nu(x,y)} dxdy=o\left (\alpha^{-2}\ell^2(\alpha)\right).
$$
\end{lemma}
\begin{proof}
Using $\nu(x,y)\leq \sqrt{\mu(x)\mu(y)}\leq (\mu(x)+\mu(y))/2$ and Assumption \ref{assumpt:2},
\begin{align*}
\int_{\mathbb{R}_{+}^{2}}  &\nu(x,y)   e^{-\alpha\mu(x)-\alpha\mu(y)+\alpha\nu(x,y)} dxdy \leq\int_{\mathbb{R}_{+}^{2}}  \nu(x,y)   e^{-\alpha\mu(x)/2-\alpha\mu(y)/2} dxdy  \\
&\leq C_1 \left(\int_{x_0}^\infty \mu(x)^a e^{-\alpha\mu(x)/2} \right )^2  +2\int_0^{x_0}\int_0^\infty \nu(x,y)e^{-\alpha\mu(x)/2-\alpha\mu(y)/2} dxdy
\end{align*}
where $a>\min(1/2,\sigma)$ if $\sigma<1$ and $a=1$ if $\sigma=1$.
Using $\int_0^{x_0}\nu(x,y) dx\leq x_0\mu(y)$, we have if $x_0>0$ (otherwise the bound is trivial)
\begin{align*}
\int_0^{x_0}\int_0^\infty \nu(x,y)e^{-\alpha\mu(x)/2-\alpha\mu(y)/2} dxdy\leq e^{-\alpha\mu(x_0)/2}x_0\int_0^\infty \mu(y)e^{-\alpha\mu(y)/2}dy.
\end{align*}
Since $\mu(x_0)>0$, the RHS is in $o(\alpha^{-p})$ for any $p>0$.
Using Lemma \ref{lemma:abelianvariationsv3} ($\sigma<1$) or  \ref{lemma:abelianvariations} ($\sigma=1$) together with Assumption~\ref{assumpt:1}, we therefore obtain
\begin{align*}
\int_{\mathbb{R}_{+}^{2}}  \nu(x,y)   e^{-\alpha\mu(x)-\alpha\mu(y)+\alpha\nu(x,y)} dxdy &=O \{ \alpha^{2\sigma-2a}\ell^2_\sigma(\alpha)\}.
\end{align*}
In the case $\sigma=0$ and $a=1$, Lemma \ref{lemma:abelianvariations} and Assumption~\ref{assumpt:1} give
$$
\int_{\mathbb{R}_{+}^{2}}  \nu(x,y)   e^{-\alpha\mu(x)-\alpha\mu(y)+\alpha\nu(x,y)} dxdy=o\left (\alpha^{-2}\ell^2(\alpha)\right).
$$
\end{proof}

\section{Background on regular variation and some technical Lemmas about regularly varying functions} \label{sec:regularvariation}

\begin{definition}\label{def:RV}
A measurable function $U:\mathbb{R}_{+}\rightarrow\mathbb{R}_{+}$ is regularly
varying at $\infty$ with index $\rho\in\mathbb{R}$ if for $x>0$,
$
\lim_{t\rightarrow\infty}U(tx)/U(t)=x^{\rho}.%
$
We note $U\in RV_{\rho}$. If $\rho=0$, we call $U$ slowly varying.
\end{definition}

\begin{proposition}
If $U\in RV_{\rho}$, then there exists a slowly varying function $\ell\in
RV_{0}$ such that
\begin{equation}
U(x)=x^{\rho}\ell(x)
\end{equation}

\end{proposition}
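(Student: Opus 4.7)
The plan is essentially to exhibit the slowly varying factor by the only reasonable candidate, and then to verify the two required properties.

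First, I would define $\ell(x) := U(x)/x^{\rho}$ for $x>0$. This is a measurable function on $(0,\infty)$ as a ratio of measurable functions, and it is well defined (and positive) for $x$ large enough since $U$ maps into $\mathbb{R}_+$ and the hypothesis $U(tx)/U(t)\to x^\rho \in (0,\infty)$ forces $U(t)\neq 0$ for all sufficiently large $t$. The factorization $U(x)=x^\rho\ell(x)$ is then immediate.

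Second, I would check that $\ell\in RV_0$. For any fixed $x>0$,
\[
\frac{\ell(tx)}{\ell(t)} \;=\; \frac{U(tx)/(tx)^\rho}{U(t)/t^\rho} \;=\; x^{-\rho}\,\frac{U(tx)}{U(t)}.
\]
By the assumption $U\in RV_\rho$ the last ratio tends to $x^\rho$ as $t\to\infty$, hence $\ell(tx)/\ell(t)\to x^{-\rho}\cdot x^\rho = 1$, which is exactly the defining property of a slowly varying function.

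There is no real obstacle here: the proposition is essentially a tautological restatement of the definition, with the slowly varying factor produced by brute-force division. The only mild point worth flagging is the well-definedness issue above (ensuring the quotient $U(tx)/U(t)$ makes sense for large $t$), but it follows immediately from the hypothesis that the regularly varying limit exists and is finite and nonzero.
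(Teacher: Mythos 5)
Your proof is correct: defining $\ell(x)=U(x)/x^{\rho}$ and checking $\ell(tx)/\ell(t)=x^{-\rho}\,U(tx)/U(t)\to 1$ is the standard (and essentially only) argument, and your remark on well-definedness of the quotient for large $t$ is the right point to flag. The paper states this proposition without proof as a textbook fact, so your write-up simply supplies the routine verification that the authors omitted.
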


\begin{definition}\label{def:debruijn}
The de Bruijn conjugate $\ell^\#$ of the slowly varying function $\ell$, which always exists, is uniquely defined up to asymptotic equivalence~\citep[Theorem 1.5.13]{Bingham1987} by
$$
\ell(x)\ell^\#\{x\ell(x)\}\rightarrow 1,~~~\ell^\#(x)\ell\{x\ell^\#(x)\}\rightarrow 1
$$
as $x\rightarrow\infty$. Then $(\ell^\#)^\# \sim\ell$. For example, $(\log^a x)^\#\sim \log^{-a} x$ for $a\neq 0$ and $\ell^\# (x)\sim 1/c$ if $\ell(x)\sim c$.
\end{definition}

\begin{proposition}
\cite[Proposition 0.8, Chapter 0]{Resnick1987} \label{prop:rv1} If $U\in
RV_{\rho}$, $\rho\in \mathbb R$, and the sequences $(a_{n})$ and $(a_{n}^{\prime})$
satisfy $0<a_{n}\rightarrow\infty$, $0<a_{n}^{\prime}\rightarrow\infty$ and
$a_{n}\sim ca_{n}^{\prime}$ for some $0<c<\infty$, then%
\[
U(a_{n})\sim c^{\rho}U(a_{n}^{\prime})\text{ as }n\rightarrow\infty.
\]

\end{proposition}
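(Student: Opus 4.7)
The plan is to reduce the statement to the Uniform Convergence Theorem (UCT) for regularly varying functions, which asserts that for $U \in RV_\rho$ the convergence $U(tx)/U(t) \to x^\rho$ holds uniformly for $x$ in any compact subset of $(0,\infty)$. Using the representation $U(x) = x^\rho \ell(x)$ with $\ell \in RV_0$ provided by the preceding proposition, I would write
\begin{equation*}
\frac{U(a_n)}{U(a_n')} \;=\; \left(\frac{a_n}{a_n'}\right)^{\!\rho} \cdot \frac{\ell(a_n)}{\ell(a_n')}.
\end{equation*}
The first factor tends to $c^\rho$ by hypothesis and continuity of $x\mapsto x^\rho$ on $(0,\infty)$, so the task reduces to proving $\ell(a_n)/\ell(a_n') \to 1$.

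Next, set $x_n = a_n/a_n'$, so by assumption $x_n \to c$ and $a_n' \to \infty$. For $n$ large enough, $x_n$ lies in a fixed compact interval, say $K = [c/2,\,3c/2] \subset (0,\infty)$. I would then apply the UCT to $\ell$ at the point $t = a_n' \to \infty$ with $x = x_n \in K$, concluding that
\begin{equation*}
\sup_{x \in K} \left| \frac{\ell(a_n' x)}{\ell(a_n')} - 1 \right| \;\longrightarrow\; 0,
\end{equation*}
and in particular evaluating at $x = x_n$ yields $\ell(a_n)/\ell(a_n') \to 1$. Combining with the first factor gives $U(a_n)/U(a_n') \to c^\rho$.

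The main obstacle is of course the UCT itself, which is the substantive ingredient; the remainder is bookkeeping. The classical proof (as in Bingham, Goldie and Teugels, or Resnick) exploits measurability of $\ell$ via a Fubini/Egorov argument to upgrade pointwise convergence on $(0,\infty)$ to uniform convergence on compacta. Since the excerpt cites Resnick's Proposition 0.8 directly and the UCT is a foundational result in the theory of regular variation, in this paper I would simply invoke UCT as a black box rather than reprove it, making the body of the proof essentially the two-line computation above.
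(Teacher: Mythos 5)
Your argument is correct, and the paper itself offers no proof of this statement --- it is quoted directly from Resnick, whose own proof proceeds exactly as you do, via the representation $U(x)=x^{\rho}\ell(x)$ and the Uniform Convergence Theorem applied to the slowly varying factor on a compact neighbourhood of $c$. The one point worth keeping explicit is the one you already flag: since $x_n=a_n/a_n'$ varies with $n$, pointwise convergence of $\ell(tx)/\ell(t)$ is not enough and the uniformity on compacta is genuinely needed, so invoking the UCT as a black box is the right (and standard) move here.
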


\begin{proposition}\citep[Proposition 0.7, p.21]{Resnick1987}
\label{prop:RVderivative}Let
$U:\mathbb{R}_{+}\rightarrow\mathbb{R}_{+}$ absolutely continuous with density
$u$, so that $U(x)=\int_0^x u(t)dt$. If $U\in RV_{\rho}$, $\rho\in\mathbb{R}$ and $u$ is monotone, then
\[
\lim_{x\rightarrow\infty}\frac{xu(x)}{U(x)}=\rho
\]
and if $\rho\neq0$, then $sign(\rho)u(x)\in RV_{\rho-1}$.

\end{proposition}

\end{appendices}

\pagebreak

\begin{frontmatter}
\begin{aug}
\title{On sparsity, power-law and clustering properties of graphex processes:\\Supplementary Material}

\date{\today}
\author{Fran\c cois Caron},
\author{Francesca Panero}
\and
\author{Judith Rousseau}

\end{aug}
\end{frontmatter}

\setcounter{equation}{0}
\setcounter{figure}{0}
\setcounter{table}{0}
\setcounter{page}{1}
\makeatletter
\renewcommand{\theequation}{S\arabic{equation}}
\renewcommand{\thefigure}{S\arabic{figure}}
\renewcommand{\bibnumfmt}[1]{[S#1]}
\renewcommand{\citenumfont}[1]{S#1}
\renewcommand*{\thetheorem}{S\arabic{theorem}}
\renewcommand\thesection{S\arabic{section}}
\renewcommand\thesubsection{\thesection.\arabic{subsection}}
\renewcommand\theequation{S\arabic{equation}}

The supplementary material is organised as follows. Section \ref{sec:proofsecond} contains proofs of asymptotic bounds on the variances of the number of nodes, number of nodes of a given degree, and number of triangles of nodes with a given degree, as well as the proof of a secondary proposition for the local clustering coefficient. Section \ref{sec:proofsecondCLT} contains proofs of secondary propositions for the central limit theorem. For the sake of simplicity, all Sections, Equations, Lemmas, etc., in the Supplementary material here are denoted with a prefix S, to differentiate them from the Sections, Equations, Lemmas, etc., of the main text \citep{CaronRousseau2017:main}. 

\section{Proofs of secondary propositions for the variances and clustering coefficients}
\label{sec:proofsecond}

\subsection{Proof of Proposition \ref{prop:variancenbnodes} on $\var(N_\alpha)$}%
\label{sec:proofvarnnodes}

An application of the Slivnyak-Mecke and Campbell theorems gives
\begin{align*}
&\var(N_{\alpha})    =E(N_{\alpha})+2\alpha^{2}\int_{0}^{\infty}%
\mu(x)(1-W(x,x))e^{-\alpha\mu(x)}dx\\
&  +\alpha^{2}\int_{\mathbb{R}_{+}^{2}}(e^{\alpha\nu(x,y)}%
-1+W(x,y))(1-W(x,x))(1-W(y,y))(1-W(x,y))e^{-\alpha\mu(x)-\alpha\mu(y)}dxdy.
\end{align*}

Using the inequality $e^x-1\leq xe^{x}$,
\begin{align*}
\var(N_{\alpha})
& \leq E(N_{\alpha}) +  2\alpha^2 \int_{\mathbb{R}_{+}}\mu(x)e^{-\alpha\mu(x)}dx  +\alpha^{2}\int_{\mathbb{R}_{+}^{2}} e^{-\alpha\mu(x)-\alpha\mu(y)}\left\{ \alpha \nu(x,y)e^{\alpha \nu(x,y)} +W(x,y) \right\}dxdy
\end{align*}

Now, using Lemmas \ref{lemma:abelianvariations} and~\ref{lemma:boundintnu}
\begin{align*}
\int_{\mathbb{R}_{+}^{2}}W(x,y) e^{-\alpha\mu(x)-\alpha\mu(y)}dxdy \leq  \int_{\mathbb R_+} \mu(x) e^{-\alpha\mu(x) } dx&=O\left ( \alpha^{\sigma-1}\ell_\sigma(\alpha)\right).\\
\int_{\mathbb{R}_{+}^{2}}  \nu(x,y)   e^{-\alpha\mu(x)-\alpha\mu(y)+\alpha\nu(x,y)} dxdy&=O\left (\alpha^{2\sigma-2a}\ell_\sigma^2(\alpha)\right).
\end{align*}

It follows that
$\var(N_{\alpha})  =O(\alpha^{3+2\sigma -2a}\ell_\sigma(\alpha)^2).
$

\label{sec:proofvariance}
Assume Assumption \ref{assumpt:1} and \ref{assumpt:2} are satisfied, with $a=1$. From the first part of Proposition \ref{prop:variancenbnodes}, we have the upper bound
$\var(N_{\alpha})=O\left(\alpha^{1+2\sigma}\ell_\sigma^{2}(\alpha)\right ).
$\\
We now derive a lower bound. If $\sigma=0$, $\var(N_{\alpha})\geq E(N_{\alpha})\gtrsim \alpha\ell_0(\alpha)$, hence $\var(N_{\alpha})\asymp \alpha\ell(\alpha)$. Consider now the case $\sigma>0$. We have
\[
\var(N_{\alpha})\geq\alpha^{2}\int_{\mathbb{R}_{+}^{2}}(e^{\alpha\nu
(x,y)}-1)(1-W(x,x))(1-W(y,y))(1-W(x,y))e^{-\alpha\mu(x)-\alpha\mu(y)}dxdy
\]
and using the inequality $e^{x}-1\geq x$ and Assumption \ref{assumpt:4}
\begin{align*}
\var(N_{\alpha})  &  \geq\alpha^{3}\int_{\mathbb{R}_{+}^{2}}\nu
(x,y)(1-W(x,x))(1-W(y,y))(1-W(x,y))e^{-\alpha\mu(x)-\alpha\mu(y)}dxdy\\
&  \geq C_{0}\alpha^{3}\int_{x_{0}}^{\infty}\int_{x_{0}}^{\infty}\mu
(x)\mu(y)(1-W(x,x))(1-W(y,y))(1-W(x,y))e^{-\alpha\mu(x)-\alpha\mu(y)}dxdy
\end{align*}
Using Lemmas~\ref{lemma:abelianvariations} and \ref{lemma:abelianvariationsv2}, we have
\begin{align*}
&  \int_{x_{0}}^{\infty}\int_{x_{0}}^{\infty}\mu(x)\mu
(y)(1-W(x,x))(1-W(y,y))(1-W(x,y))e^{-\alpha\mu(x)-\alpha\mu(y)}dxdy\\
&  \sim\int_{0}^{\infty}\int_{0}^{\infty}\mu(x)\mu(y)e^{-\alpha
\mu(x)-\alpha\mu(y)}dxdy=\left (\int_{\mathbb{R}_{+}}\mu(x)e^{-\alpha\mu(x)}dx\right )^2\sim\alpha^{2\sigma-2}\ell_\sigma^{2}(\alpha).
\end{align*}
It follows that, for $\sigma>0$, $\var(N_{\alpha})\gtrsim\alpha^{1+2\sigma}\ell_\sigma^{2}(\alpha).$
Combining this with the upper bound gives, for all $\sigma\in[0,1]$
$
\var(N_{\alpha})\asymp\alpha^{1+2\sigma}\ell_\sigma^{2}(\alpha).
$

\subsection{Proof of proposition \ref{prop:variancenbnodesj} on $\var(N_{\alpha,j})$}
\label{sec:proofvarnnodesj}

We have,
\begin{align*}
& E(N_{\alpha,j}^{2}\mid M)-E(N_{\alpha,j}\mid M)\nonumber\\
&=  \sum_{i_{1}\neq i_{2}}\1{\theta_{i_{1}}\leq\alpha}\1{\theta_{i_{2}}\leq\alpha}\Pr\left\{  \sum
_{k}\1{\theta_{k}\leq\alpha}Z_{i_{1}k}=j\text{ and } \sum_{k}\1{\theta_{k}\leq\alpha}Z_{i_{2}%
,k}=j\mid M\right\}. \label{eq:bigexpectation}\\
&=\sum_{b\in\{0,1\}^3} \sum_{j_1=0}^j \sum_{i_{1}\neq i_{2}}\1{\theta_{i_{1}}\leq\alpha}\1{\theta_{i_{2}}\leq\alpha}\\
&\quad\times\Pr\left\{  \sum
_{k}\1{\theta_{k}\leq\alpha}Z_{i_{1}k}=j\text{ and } \sum_{k}\1{\theta_{k}\leq\alpha}Z_{i_{2}%
,k}=j\text{ and } \sum
_{k}\1{\theta_{k}\leq\alpha}Z_{i_{1}k}Z_{i_{2}k}=j-j_1\right .\\
&\quad\quad\quad\quad \left .\text{ and }Z_{i_1i_1}=b_{11},Z_{i_1i_2}=b_{12},Z_{i_2i_2}=b_{22} \mid M\right \}
\end{align*}
where $b=(b_{11},b_{12},b_{22})\in\{0,1\}^3$. Let $A_1,A_2,A_{12}$ be disjoint subsets of $\mathbb N\backslash \{i_1,i_2\}$ such that $|A_{12}|+b_{12}=j-j_1$, $|A_1| + |A_{1,2}|+b_{11}+b_{12} = |A_2| + |A_{1,2}|+b_{22}+b_{12} = j$ respectively corresponding to the indices of nodes only connected to node $i_1$, only to node $i_2$, or to both nodes $(i_1,i_2)$. Let $A=\{i_1,i_2\}\cup A_1\cup A_2\cup A_{12}$. We have
\begin{align*}
&\Pr\left\{  \sum
_{k}\1{\theta_{k}\leq\alpha}Z_{i_{1}k}=j, \sum_{k}\1{\theta_{k}\leq\alpha}Z_{i_{2}%
,k}=j, \sum
_{k}\1{\theta_{k}\leq\alpha}Z_{i_{1}k}Z_{i_{2}k}=j-j_1,(Z_{i_1i_1},Z_{i_1i_2},Z_{i_2i_2})=b \mid M\right \}\\
&=\sum_{A_1,A_2,A_{12}}\frac{ \1{\theta_{i_1} \leq\alpha}\1{\theta_{i_{2}}\leq\alpha}}{(j-j_1-b_{12})!(j_1-b_{11})!(j_1-b_{22})! }   W(\vartheta_{i_{1}}, \vartheta_{i_1})^{b_{11}}W(\vartheta_{i_{2}}, \vartheta_{i_2})^{b_{22}} W(\vartheta_{i_{1}}, \vartheta_{i_2})^{b_{12}}  \\
& \quad\times\{ 1 - W(\vartheta_{i_{1}}, \vartheta_{i_1})\}^{1-b_{11}}\{ 1 - W(\vartheta_{i_{2}}, \vartheta_{i_2})\}^{1-b_{22}} \{ 1 - W(\vartheta_{i_{1}}, \vartheta_{i_2})\}^{1-b_{12}}  \\
& \quad\times\left [ \prod_{k \in A_1} \1{\theta_k \leq \alpha}
W(\vartheta_{i_{1}},\vartheta_{i_{k}})\{1 - W(\vartheta_{i_{2}},\vartheta_{i_{k}})\} \right ] \left [ \prod_{k \in A_2} \1{\theta_k \leq \alpha}
\{1 - W(\vartheta_{i_{1}},\vartheta_{i_{k}})\}W(\vartheta_{i_{2}},\vartheta_{i_{k}}) \right ] \\
& \quad \left [ \prod_{k \in A_{12}} \1{\theta_k \leq \alpha}
W(\vartheta_{i_{1}},\vartheta_{i_{k}})W(\vartheta_{i_{2}},\vartheta_{i_{k}}) \right ] \exp\left[ - \sum_{k \in \mathbb N\backslash A} \{g_{\alpha,\vartheta_{i_1}}(\theta_k, \vartheta_k)+g_{\alpha,\vartheta_{i_2}}(\theta_k, \vartheta_k)\}\right] \\
\end{align*}
Using the extended Slivnyak-Mecke theorem,
 \begin{equation*}
 \begin{split}
& E(N_{\alpha,j}^{2})-E(N_{\alpha,j})\\
&=\sum_{b\in\{0,1\}^3} \sum_{j_1=0}^j \frac{\alpha^{2+j+j_1-b_{11}-b_{12}-b_{22}}}{(j-j_1-b_{12})!(j_1-b_{11})!(j_1-b_{22})! }\1{j_1\geq b_{11}}\1{j_1\geq b_{22}}\1{j_1\leq{j-b_{12}}}\\
&\qquad \times\int_{\R_+^2} \{\mu(x)-\nu(x,y)\}^{j_1-b_{11}}\{\mu(y)-\nu(x,y)\}^{j_1-b_{22}} \nu(x,y)^{j-j_1-b_{12}}e^{- \alpha \mu(x) -\alpha \mu(y) +\alpha \nu(x,y) } \\
 & \qquad  \qquad   \times W(x,x)^{b_{11}}W(y,y)^{b_{22}}W(x,y)^{b_{12}}\{1 - W(x,x)\}^{1-b_{11}}\{1-W(y,y)\}^{1-b_{22}}\{1 - W(x,y)\}^{1-b_{12}}dxdy\\
 &\leq \sum_{b\in\{0,1\}^3} \sum_{j_1=0}^j \frac{\alpha^{2+j+j_1-b_{11}-b_{12}-b_{22}}}{(j-j_1-b_{12})!(j_1-b_{11})!(j_1-b_{22})! }\1{j_1\geq b_{11}}\1{j_1\geq b_{22}}\1{j_1\leq{j-b_{12}}}\\
 &\qquad\times\int_{\R_+^2} \mu(x)^{j_1-b_{11}}\mu(y)^{j_1-b_{22}} \nu(x,y)^{j-j_1-b_{12}}e^{- \alpha \mu(x) -\alpha \mu(y) +\alpha \nu(x,y) }  \\
 & \qquad  \qquad   \times W(x,x)^{b_{11}}W(y,y)^{b_{22}}W(x,y)^{b_{12}}\{1 - W(x,x)\}^{1-b_{11}}\{1-W(y,y)\}^{1-b_{22}}\{1 - W(x,y)\}^{1-b_{12}}dxdy
\end{split}
\end{equation*}
We will need the following lemma.

\begin{lemma}\label{lemma:Ir}
Let  $r\geq 1$,  $j_1, j_2 \geq 0$. Define
 \begin{equation*}
 \begin{split}
  I_r& := \int_{\R^2} [\alpha\mu(x)]^{j_1}[\alpha\mu(y)]^{j_2} (\alpha \nu(x,y))^{r} e^{ - \alpha \mu(x) -\alpha \mu(y) +\alpha \nu(x,y)}dxdy.
\end{split}
\end{equation*}
Under Assumptions 1 and 2, we have
$$I_r  =O( \alpha^{r-2ar+2\sigma }  \ell_\sigma^2(\alpha))$$ for all $r\geq 1$.

\end{lemma}
\begin{proof}
We have, using Assumption 2, that
 \begin{equation*}
 \begin{split}
  I_r& \leq  \alpha^{r}  \int_{\R_+^2} [\alpha\mu(x)]^{j_1}[\alpha\mu(y)]^{j_2} \nu(x,y)^{r} e^{ - \alpha \left\{ \mu(x) + \mu(y) \right\}/2 }dxdy\\
    &\leq C_1^r \alpha^{r-2ar}  \left(\int_{\mathbb R_+} (\alpha \mu(x) )^{j_1+ar}e^{-\alpha \mu(x) /2} dx \right) \left(\int_{\mathbb R_+} (\alpha \mu(x) )^{j_2+ar}e^{-\alpha \mu(x) /2} dx \right)+o(\alpha^{-p}).
\end{split}
\end{equation*}
for any $p>0$. Assumption 1 and Lemmas \ref{lemma:abelianvariations} ($\sigma=1$) and  \ref{lemma:abelianvariationsv3} ($\sigma\in[0,1)$) imply that
$$I_r  =O( \alpha^{r-2ar+2\sigma }  \ell_\sigma^2(\alpha))$$ for all $r\geq 1$.
\end{proof}

It follows
\begin{align*}
 E(N_{\alpha,j}^{2})-E(N_{\alpha,j})&\lesssim \sum_{b\in\{0,1\}^3} \frac{\alpha^{2+2j-b_{11}-b_{22}-2b_{12}}}{(j-b_{12}-b_{11})!(j-b_{12}-b_{22})! }\1{j\geq b_{11}+b_{12}}\1{j\geq b_{22}+b_{12}}\\
 &\qquad\times\int_{\R_+^2} \mu(x)^{j-b_{12}-b_{11}}\mu(y)^{j-b_{12}-b_{22}} e^{- \alpha \mu(x) -\alpha \mu(y) +\alpha \nu(x,y) } \\
 & \qquad  \qquad   \times W(x,x)^{b_{11}}W(y,y)^{b_{22}}W(x,y)^{b_{12}}\\
 &\qquad  \qquad   \times\{1 - W(x,x)\}^{1-b_{11}}\{1-W(y,y)\}^{1-b_{22}}\{1 - W(x,y)\}^{1-b_{12}}dxdy\\
 &\qquad+O\{ \alpha^{2+2\sigma+1-2a}\ell_\sigma^2(\alpha)\}.
\end{align*}
Let $V_{0}$ and $V_1$ respectively denote the sum of terms such that $b_{12}=0$ and $b_{12}=1$ in the above sum. Using the inequality $e^{x}\leq 1+xe^x$,
\bgroup
\allowdisplaybreaks
\begin{align*}
V_{0}&= \sum_{b_{11},b_{22}\in\{0,1\}^2}  \frac{\alpha^{2+2j-b_{11}-b_{22}}}{(j-b_{11})!(j-b_{22})! } \int_{\R_+^2} \mu(x)^{j-b_{11}}\mu(y)^{j-b_{12}} e^{- \alpha \mu(x) -\alpha \mu(y) +\alpha \nu(x,y) }\\
& \qquad  \qquad    \times W(x,x)^{b_{11}}W(y,y)^{b_{22}}\{1 - W(x,x)\}^{1-b_{11}}\{1-W(y,y)\}^{1-b_{22}}\{1 - W(x,y)\}dxdy\\
&= \sum_{b_{11},b_{22}}  \frac{\alpha^{2+2j-b_{11}-b_{22}}}{(j-b_{11})!(j-b_{22})! } \int_{\R_+^2} \mu(x)^{j-b_{11}}\mu(y)^{j-b_{12}} e^{- \alpha \mu(x) -\alpha \mu(y)}\\
& \qquad  \qquad    \qquad \qquad\times W(x,x)^{b_{11}}W(y,y)^{b_{22}}\{1 - W(x,x)\}^{1-b_{11}}\{1-W(y,y)\}^{1-b_{22}}dxdy\\
&\quad+O \left\{\sum_{b_{11},b_{22}}  \frac{\alpha^{3+2j-b_{11}-b_{22}}}{(j-b_{11})!(j-b_{22})! } \int_{\R_+^2} \mu(x)^{j-b_{11}}\mu(y)^{j-b_{12}}\nu(x,y) e^{- \alpha \mu(x) -\alpha \mu(y) +\alpha \nu(x,y) }dxdy \right\} \\
&=\sum_{b_{11},b_{22}}  \left \{ \frac{\alpha^{1+j-b_{11}}}{(j-b_{11})!}\int_{\mathbb R_+}\mu(x)^{j-b_{11}}W(x,x)^{b_{11}}\{1 - W(x,x)\}^{1-b_{11}}e^{-\mu(x)} dx \right \}  \\
&\quad\quad\quad\quad\quad\times \left \{ \frac{\alpha^{1+j-b_{22}}}{(j-b_{22})!}\int_{\mathbb R_+}\mu(y)^{j-b_{22}}W(y,y)^{b_{22}}\{1 - W(y,y)\}^{1-b_{22}}e^{-\mu(y)} dy \right \}\\
&\quad+O\{\alpha^{2+2\sigma+1-2a}\ell_\sigma^2(\alpha)\}= E(N_{\alpha,j})^2  +O\{\alpha^{2+2\sigma+1-2a}\ell_\sigma^2(\alpha)\}
\end{align*}
\egroup
Similarly,
\begin{align*}
V_{1}&\leq  \sum_{b_{11},b_{22}}  \frac{\alpha^{2j-b_{11}-b_{22}}\1{j\geq 1+b_{11}}\1{j\geq 1+b_{22}}}{(j-1-b_{11})!(j-1-b_{22})! } \int_{\R_+^2} \mu(x)^{j-1-b_{11}}\mu(y)^{j-1-b_{12}} e^{\alpha \nu(x,y)- \alpha \mu(x) -\alpha \mu(y) }W(x,y)dxdy\\
&\leq \sum_{b_{11},b_{22}}  \frac{\alpha^{2j-b_{11}-b_{22}}\1{j\geq 1+b_{11}}\1{j\geq 1+b_{22}}}{(j-1-b_{11})!(j-1-b_{22})! } \int_{\R_+^2} \mu(x)^{j-1-b_{11}}\mu(y)^{j-1-b_{12}} e^{- \alpha \mu(x) -\alpha \mu(y) }W(x,y)dxdy\\
&\quad +O\{\alpha^{2+2\sigma+1-2a}\ell_\sigma^2(\alpha)\}
\end{align*}

For $j_1\geq 1$ and $j_2\geq 1$, using Cauchy-Schwarz and Lemma \ref{lemma:abelianvariationsv3},
\begin{align*}
&\int W(x,y)\mu(x)^{j_1} \mu(y)^{j_2} e^{-\alpha \mu(x) -\alpha \mu(y)} dxdy\\
&\leq
\int_{\mathbb R_+}\mu(x)^{j_1}  e^{-\alpha \mu(x)} \left\{ \int W(x,y)  \mu(y)^{2j_2}e^{ -2\alpha \mu(y)} dy\right\}^{1/2} \mu(x)^{1/2} dx \\
& \leq \left\{\int\mu(x)^{j_1+1/2}  e^{-\alpha \mu(x)} dx\right \}  \left\{\int  \mu(y)^{2j_2}e^{ -2\alpha \mu(y)} dy\right\}^{1/2} =O\left \{\alpha^{3\sigma/2-j_1-j_2-1/2}\ell^{3/2}_\sigma(\alpha)\right \}
\end{align*}
and for $j_1\geq 0$
\begin{align*}
\int_{\R_+^2} \mu(x)^{j_1} e^{- \alpha \mu(x) -\alpha \mu(y) }W(x,y)dxdy&\leq \int_{\R_+^2} \mu(x)^{j_1} e^{- \alpha \mu(x) }W(x,y)dxdy\\
&=\int_{\R_+} \mu(x)^{j_1+1} e^{- \alpha \mu(x) }dx=O\left \{\alpha^{\sigma-j_1-1}\ell_\sigma(\alpha)\right \}
\end{align*}
It follows that $V_1=O\{\alpha^{2+3\sigma/2-1/2}\ell^{3/2}_\sigma(\alpha)\}+O\{\alpha^{1+\sigma}\ell_\sigma^2(\alpha)\}+O\{\alpha^{2+2\sigma+1-2a}\ell_\sigma^2(\alpha)\}$. Combining the upper bounds on $V_0$ and $V_1$, we obtain
$
\var(N_{\alpha,j})=O(\alpha^{3-2a+2\sigma}\ell^2_\sigma(\alpha))
$
and this terminates the proof. In the case $\sigma=0$ and $a=1$, one can use Lemma \ref{lemma:abelianvariations} instead of  Lemma \ref{lemma:abelianvariationsv3} and replace big $O$ by little $o$ in the above bounds, together with the fact that $E(N_{\alpha,j})=o(\alpha\ell(\alpha))$ and $\ell(t)=O(\ell^2(t))$ if $\sigma=0$.

\subsection{Proof of Proposition \ref{prop:variancelocalclust}}

\label{sec:proofvariancelocalclust}

We first prove the first equality. The proof is similar to that of Proposition \ref{prop:variancenbnodesj}, given in Section~\ref{sec:proofvarnnodesj}.
For any $j\geq 2$,
\begin{align*}
2R_{\alpha,j}&=2\sum_i T_{\alpha i}\1{D_{\alpha i}=j}\1{\theta_i\leq \alpha}=\sum_{i\neq k\neq l} Z_{ik}Z_{il}Z_{kl}\1{D_{\alpha i}=j}\1{\theta_i\leq \alpha}.
\end{align*}
Let $S_{\alpha j}:=4R_{\alpha j}^2$. We have
\begin{equation}\label{Big:R2}
\begin{split}
S_{\alpha j}&=\left (\sum_{i\neq k\neq l} Z_{ik}Z_{il}Z_{kl}\1{D_{\alpha i}=j}\1{\theta_i\leq \alpha}\right )^2\\
&=\sum_{i_1\neq k_1\neq l_1\neq i_2\neq k_2\neq l_2} Z_{i_1k_1}Z_{i_1l_1}Z_{k_1l_1}Z_{i_2k_2}Z_{i_2l_2}Z_{k_2l_2}  \1{D_{\alpha i_1}=j}\1{D_{\alpha i_2}=j}\1{\theta_{i_1}\leq \alpha}  \1{\theta_{i_2}\leq \alpha}\\
&\quad+2\sum_{i_1\neq k_1\neq l_1\neq i_2\neq k_2} Z_{i_1k_1}Z_{i_1l_1}Z_{k_1l_1}Z_{i_2k_2}Z_{i_2l_1}Z_{k_2l_1}  \1{D_{\alpha i_1}=j}\1{D_{\alpha i_2}=j}\1{\theta_{i_1}\leq \alpha}  \1{\theta_{i_2}\leq \alpha}\\
&\quad+2\sum_{i_1\neq k_1\neq l_1\neq i_2} Z_{i_1k_1}Z_{i_1l_1}Z_{k_1l_1}Z_{i_2k_1}Z_{i_2l_1}  \1{D_{\alpha i_1}=j}\1{D_{\alpha i_2}=j}\1{\theta_{i_1}\leq \alpha}  \1{\theta_{i_2}\leq \alpha}\\
&\quad+\sum_{i_1\neq k_1\neq l_1\neq k_2\neq l_2} Z_{i_1k_1}Z_{i_1l_1}Z_{k_1l_1}Z_{i_1k_2}Z_{i_1l_2}Z_{k_2l_2}  \1{D_{\alpha i_1}=j}\1{\theta_{i_1}\leq \alpha}\\
&\quad+2\sum_{i_1\neq k_1\neq l_1\neq k_2} Z_{i_1k_1}Z_{i_1l_1}Z_{k_1l_1}Z_{i_1k_2}Z_{k_2l_1}  \1{D_{\alpha i_1}=j}\1{\theta_{i_1}\leq \alpha}\\
&\quad+2\sum_{i_1\neq k_1\neq l_1} Z_{i_1k_1}Z_{i_1l_1}Z_{k_1l_1} \1{D_{\alpha i_1}=j}\1{\theta_{i_1}\leq \alpha}
\end{split}
\end{equation}

Note that some of the terms above are equal to 0 if $j\leq 4$. First note that for any $j_0\leq j$:
\begin{equation}
\label{eq:temp2}
\sum_{i\neq k_1\neq \ldots\neq k_{j_0}} \left ( \prod_{l=1}^{j_0} Z_{il}\right )\1{D_{\alpha i}=j}\1{\theta_i\leq \alpha}\leq \binom{j}{j_0} N_{\alpha j}
\end{equation}

Hence the last three terms of the right-handside of \eqref{Big:R2} are upper bounded by  $C_j N_{\alpha,j}$, for some constant $C_j$ that does not depend on $\alpha$. Consider now
\begin{align*}
S_{\alpha,j,1}&=\sum_{i_1\neq k_1\neq l_1\neq i_2\neq k_2\neq l_2} Z_{i_1k_1}Z_{i_1l_1}Z_{k_1l_1}Z_{i_2k_2}Z_{i_2l_2}Z_{k_2l_2}  \1{D_{\alpha i_1}=j}\1{D_{\alpha i_2}=j}\1{\theta_{i_1}\leq \alpha}  \1{\theta_{i_2}\leq \alpha}\\
&=\sum_{j_1=2}^j S_{\alpha,j,1,j_1}
\end{align*}
where, for $j_1=2,\ldots,j$
\begin{align*}
S_{\alpha,j,1,j_1}&= \sum_{\substack{i_1\neq k_1\neq l_1\\\neq i_2\neq k_2\neq l_2}} Z_{i_1k_1}Z_{i_1l_1}Z_{k_1l_1}Z_{i_2k_2}Z_{i_2l_2}Z_{k_2l_2}\1{D_{\alpha i_1}=j}\1{D_{\alpha i_2}=j}\1{\sum_k Z_{i_1k}Z_{i_2k}\1{\theta_k\leq \alpha}=j-j_1  }\1{\theta_{i_1}\leq \alpha}  \1{\theta_{i_2}\leq \alpha}\\
&=\sum_{b\in\{0,1\}^3} \sum_{\substack{i_1\neq k_1\neq l_1\\\neq i_2\neq k_2\neq l_2}} Z_{i_1k_1}Z_{i_1l_1}Z_{k_1l_1}Z_{i_2k_2}Z_{i_2l_2}Z_{k_2l_2}\\
&\quad\times\1{D_{\alpha i_1}=j}\1{D_{\alpha i_2}=j}\1{\sum_k Z_{i_1k}Z_{i_2k}\1{\theta_k\leq \alpha}=j-j_1  }\1{Z_{i_1 i_1}=b_{11}}\1{Z_{i_1 i_2}=b_{12}}\1{Z_{i_2 i_2}=b_{22}}\1{\theta_{i_1}\leq \alpha}  \1{\theta_{i_2}\leq \alpha}
\end{align*}
where we introduce $b=(b_{11},b_{12},b_{22})\in\{0,1\}^3$ as in Section~\ref{sec:proofvarnnodesj}. Using the extended Slivnyak-Mecke theorem, for $j_1=2,\ldots,j$,
 \begin{align}
& E(S_{\alpha,j,1,j_1})\\
&=\sum_{b\in\{0,1\}^3} \frac{\alpha^{2+j+j_1-b_{11}-b_{12}-b_{22}}}{(j-j_1-b_{12})!(j_1-b_{11}-2)!(j_1-b_{22}-2)! }\1{j_1\leq{j-b_{12}}}\1{j_1\geq b_{11}}\1{j_1\geq b_{22}}\nonumber\\
&\qquad \times\int_{\R_+^6} \{\mu(x_1)-\nu(x_1,x_2)\}^{j_1-2-b_{11}}\{\mu(x_2)-\nu(x_1,x_2)\}^{j_1-2-b_{22}} \nu(x_1,x_2)^{j-j_1-b_{12}}e^{- \alpha \mu(x_1) -\alpha \mu(x_2) +\alpha \nu(x_1,x_2) } \nonumber\\
 &\qquad  \qquad   \times W(x_1,y_1)W(x_1,z_1)W(y_1,z_1)W(x_2,y_2)W(x_2,z_2)W(y_2,z_2)\nonumber\\
 & \qquad  \qquad   \times W(x,x)^{b_{11}}W(y,y)^{b_{22}}W(x,y)^{b_{12}}\nonumber\\
 & \qquad  \qquad   \times\{1 - W(x,x)\}^{1-b_{11}}\{1-W(y,y)\}^{1-b_{22}}\{1 - W(x,y)\}^{1-b_{12}}dx_1dy_1dz_1dx_2dy_2dz_2\nonumber\\
 &\leq \sum_{b\in\{0,1\}^3} \frac{\alpha^{2+j+j_1-b_{11}-b_{12}-b_{22}}}{(j-j_1-b_{12})!(j_1-b_{11}-2)!(j_1-b_{22}-2)! }\1{j_1\leq{j-b_{12}}}\1{j_1\geq b_{11}}\1{j_1\geq b_{22}}\nonumber\\
 &\qquad\times\int_{\R_+^6} \mu(x_1)^{j_1-2-b_{11}}\mu(x_2)^{j_1-2-b_{22}} \nu(x_1,x_2)^{j-j_1-b_{12}}e^{- \alpha \mu(x) -\alpha \mu(y) +\alpha \nu(x_1,x_2) }  \nonumber\\
 &\qquad  \qquad   \times W(x_1,y_1)W(x_1,z_1)W(y_1,z_1)W(x_2,y_2)W(x_2,z_2)W(y_2,z_2)\nonumber\\
 & \qquad  \qquad   \times W(x_1,x_1)^{b_{11}}W(x_2,x_2)^{b_{22}}W(x_1,x_2)^{b_{12}}\nonumber\\
 & \qquad  \qquad   \times\{1 - W(x_1,x_1)\}^{1-b_{11}}\{1-W(x_2,x_2)\}^{1-b_{22}}\{1 - W(x_1,x_2)\}^{1-b_{12}}dx_1dy_1dz_1dx_2dy_2dz_2\label{eq:temp1}
\end{align}
For $b_{12}\neq 0$ or $j\neq j_1$, we can bound the terms in the above sum by
\begin{align}
 &\frac{\alpha^{2+j+j_1-b_{11}-b_{12}-b_{22}}}{(j-j_1-b_{12})!(j_1-b_{11}-2)!(j_1-b_{22}-2)! }\1{j_1\leq{j-b_{12}}}\1{j_1\geq b_{11}}\1{j_1\geq b_{22}} \nonumber\\
 &\qquad\times\int_{\R_+^4} \mu(x_1)^{j_1-b_{11}}\mu(x_2)^{j_1-b_{22}} \nu(x_1,x_2)^{j-j_1-b_{12}}e^{- \alpha \mu(x) -\alpha \mu(y) +\alpha \nu(x_1,x_2) } \nonumber \\
 & \qquad  \qquad   \times W(x_1,x_1)^{b_{11}}W(x_2,x_2)^{b_{22}}W(x_1,x_2)^{b_{12}} \nonumber\\
 & \qquad  \qquad   \times\{1 - W(x_1,x_1)\}^{1-b_{11}}\{1-W(x_2,x_2)\}^{1-b_{22}}\{1 - W(x_1,x_2)\}^{1-b_{12}}dx_1dx_2 \nonumber\\
 &=O(\alpha^{3+2\sigma-2a}\ell_\sigma(\alpha)^2)\label{eq:temp1b}
\end{align}
using the intermediate results of the proof in Section \ref{sec:proofvarnnodesj}.

Consider now the sum of terms such that $b_{12}=0$ and $j=j_1$ in \eqref{eq:temp1}. Using the inequality $e^x\leq 1+xe^x$, this sum is upper bounded by
\begin{align*}
 &\sum_{b_{11},b_{12}} \frac{\alpha^{2+2j-b_{11}-b_{22}}}{(j-b_{11}-2)!(j-b_{22}-2)! }\int_{\R_+^6} \mu(x_1)^{j-2-b_{11}}\mu(x_2)^{j-2-b_{22}} e^{- \alpha \mu(x) -\alpha \mu(y) +\alpha \nu(x_1,x_2) }  \\
 &\qquad     \times W(x_1,y_1)W(x_1,z_1)W(y_1,z_1)W(x_2,y_2)W(x_2,z_2)W(y_2,z_2)\\
 & \qquad     \times W(x_1,x_1)^{b_{11}}W(x_2,x_2)^{b_{22}}\{1 - W(x_1,x_1)\}^{1-b_{11}}\{1-W(x_2,x_2)\}^{1-b_{22}}dx_1dy_1dz_1dx_2dy_2dz_2\\
 &\leq  \sum_{b_{11},b_{12}} \frac{\alpha^{2+2j-b_{11}-b_{22}}}{(j-b_{11}-2)!(j-b_{22}-2)! }\int_{\R_+^6} \mu(x_1)^{j-2-b_{11}}\mu(x_2)^{j-2-b_{22}} e^{- \alpha \mu(x) -\alpha \mu(y) }  \\
 & \qquad   \times W(x_1,y_1)W(x_1,z_1)W(y_1,z_1)W(x_2,y_2)W(x_2,z_2)W(y_2,z_2)\\
 &  \qquad   \times W(x_1,x_1)^{b_{11}}W(x_2,x_2)^{b_{22}}\{1 - W(x_1,x_1)\}^{1-b_{11}}\{1-W(x_2,x_2)\}^{1-b_{22}}dx_1dy_1dz_1dx_2dy_2dz_2\\
 &\quad+\sum_{b_{11},b_{12}} \frac{\alpha^{3+2j-b_{11}-b_{22}}}{(j-b_{11}-2)!(j-b_{22}-2)! }\int_{\R_+^6} \mu(x_1)^{j-2-b_{11}}\mu(x_2)^{j-2-b_{22}} \nu(x,y)e^{- \alpha \mu(x) -\alpha \mu(y) +\alpha\nu(x,y)}  \\
 &\qquad    \times W(x_1,y_1)W(x_1,z_1)W(y_1,z_1)W(x_2,y_2)W(x_2,z_2)W(y_2,z_2)\\
 & \qquad   \times W(x_1,x_1)^{b_{11}}W(x_2,x_2)^{b_{22}}  \times\{1 - W(x_1,x_1)\}^{1-b_{11}}\{1-W(x_2,x_2)\}^{1-b_{22}}dx_1dy_1dz_1dx_2dy_2dz_2\\
 &\leq 4E(R_{\alpha,j})^2\\
 &\quad+\sum_{b_{11},b_{12}} \frac{\alpha^{3+2j-b_{11}-b_{22}}}{(j-b_{11}-2)!(j-b_{22}-2)! }\int_{\R_+^2} \mu(x_1)^{j-b_{11}}\mu(x_2)^{j-b_{22}} \nu(x,y)e^{- \alpha \mu(x) -\alpha \mu(y) +\alpha\nu(x,y)}  \\
 & \qquad   \times W(x_1,x_1)^{b_{11}}W(x_2,x_2)^{b_{22}}  \times\{1 - W(x_1,x_1)\}^{1-b_{11}}\{1-W(x_2,x_2)\}^{1-b_{22}}dx_1dx_2\\
 &=4E(R_{\alpha,j})^2+O(\alpha^{3+2\sigma-2a}\ell_\sigma(\alpha)^2)
\end{align*}
using Lemma \ref{lemma:Ir} in Section \ref{sec:proofvarnnodesj}. It follows that
\begin{align}
E(S_{\alpha,j,1})=E(4R_{\alpha,j})^2 +O(\alpha^{3+2\sigma-2a}\ell_\sigma(\alpha)^2)\label{eq:temp3}
\end{align}

Consider now
\begin{align*}
S_{\alpha,j,2}=\sum_{i_1\neq k_1\neq l_1\neq i_2\neq k_2} Z_{i_1k_1}Z_{i_1l_1}Z_{k_1l_1}Z_{i_2k_2}Z_{i_2l_1}Z_{k_2l_1}  \1{D_{\alpha i_1}=j}\1{D_{\alpha i_2}=j}\1{\theta_{i_1}\leq \alpha}  \1{\theta_{i_2}\leq \alpha}
\end{align*}
We have similarly
\begin{align}
E(S_{\alpha,j,2})
 &=O(\alpha^{3+2\sigma-2a}\ell_\sigma(\alpha)^2)\label{eq:temp4}
 \end{align}
 using Lemma \ref{lemma:Ir}.
Similarly, using Lemma \ref{lemma:Ir},
$
E(S_{\alpha,j,3})=O(\alpha^{3+2\sigma-2a}\ell_\sigma(\alpha)^2).
$
\\
Combining the above bound with \eqref{eq:temp3} and  \eqref{eq:temp4}, we obtain
$
\var(R_{\alpha,j})=O(\alpha^{3+2\sigma-2a}\ell_\sigma(\alpha)^2).
$
We now consider the second bound in Proposition~\ref{prop:variancelocalclust}. Consider an increasing sequence  $\alpha_n \rightarrow \infty$ such that $\alpha_{n+1} - \alpha_n = o(\alpha_n)$ as $n\to\infty$. Let $I_{\alpha_n} = \{ i, \, \theta_i \leq \alpha_n\}$ and $I_n^c = I_{\alpha_{n+1}}\backslash I_{\alpha_{n}}$.

For any $j\geq 1$, let
\begin{align*}
\widetilde R_{nj}^{(1)}&:=\sum_{i\in I_{\alpha_{n}} }T_{\alpha_{n+1} i}\1{D_{\alpha_n i}=j} \1{ \sum_{i'\in I_n^c} Z_{ii'}=1}.
\end{align*}

We have, similarly to Equation~\eqref{Big:R2}
\begin{equation*}
\begin{split}
(\widetilde R_{nj}^{(1)})^2 & =
\sum_{i_1, i_2}^{I_{\alpha_n}}\sum_{k_1\neq l_1\neq i_1}^{I_{\alpha_{n+1}}}\sum_{i_2\neq k_2\neq l_2}^{I_{\alpha_{n+1}}} Z_{i_1k_1}Z_{i_1l_1}Z_{k_1l_1}Z_{i_2k_2}Z_{i_2l_2}Z_{k_2l_2}  \\
&\qquad\qquad\qquad\times\1{D_{\alpha_n i_1}=j}\1{D_{\alpha_n i_2}=j}\1{\sum_{i_1^{'}\in I_n^c}Z_{i_1^{'}i_1} =1}\1{\sum_{i_2^{'}\in I_n^c}Z_{i_2^{'}i_2} =1}.
\end{split}
\end{equation*}
Hence using the same decomposition as \eqref{Big:R2}  together with  the fact that $\1{\sum_{i'\in I_n^c}Z_{i'i} =1} \leq 1$, we derive the same bounds as \eqref{eq:temp2}, \eqref{eq:temp1b} and \eqref{eq:temp4} so that
\begin{equation*}
E( (\widetilde R_{nj}^{(1)})^2 ) \lesssim \alpha_n^{\sigma +1}\ell_\sigma(\alpha_n) + \alpha_n^{3 +2\sigma -2a}\ell_\sigma(\alpha_n)^2 + E( \widetilde S_{\alpha_n,j,1,j}  )
\end{equation*}
where, writing $b = (b_{11}, b_{22})$,
\begin{equation*}
\begin{split}
 \widetilde S_{\alpha_n,j,1,j}
&=\sum_{b\in\{0,1\}^2} \sum_{\substack{i_1\neq i_2}}^{\in I_{\alpha_{n}}}  \sum_{\substack{ k_1\neq l_1\\ \neq k_2\neq l_2}}^{\in I_{\alpha_{n+1} } \setminus\{ i_1,i_2\}} Z_{i_1k_1}Z_{i_1l_1}Z_{k_1l_1}Z_{i_2k_2}Z_{i_2l_2}Z_{k_2l_2}\1{D_{\alpha_n i_1}=j}\1{D_{\alpha_n i_2}=j}\\
&\quad\times\1{\sum_{k\in I_{\alpha_{n+1}}} Z_{i_1k}Z_{i_2k}=0  }\1{Z_{i_1 i_1}=b_{11}}\1{Z_{i_2 i_2}=b_{22}} \1{\sum_{i'\in I_n^c}Z_{i'i_1} =1} \1{\sum_{i'\in I_n^c}Z_{i'i_2} =1},
\end{split}
\end{equation*}
so that
$ E(\widetilde S_{\alpha_n,j,1,j})
  = E(\widetilde R_{nj}^{(1)})^2$.
We thus obtain that
$ \var(\widetilde R_{nj}^{(1)}) = O ( \alpha_n^{3+2\sigma-2a} \ell_{\sigma}(\alpha_n)^2) .$

\subsection{Proof of Lemma~\ref{lemma:asymptT}}
\label{sec:app:asymptT}

Let  $I_n^c = I_{\alpha_{n+1}}\backslash I_{\alpha_{n}}=\{i\mid \theta_i\in(\alpha_n,\alpha_{n+1}]\}$. First note that $\widetilde R_{nj}=\sum_{r=1}^j\widetilde R_{nr}^{(1)}+\widetilde R_{nr}^{(2)}+\widetilde R_{nr}^{(3)}$ where
\begin{align*}
\widetilde R_{nr}^{(1)}&=\sum_{i\in I_{\alpha_{n}} }T_{\alpha_{n+1} i}\1{D_{\alpha_n i}=r} \1{ \sum_{i'\in I_n^c} Z_{ii'}=1},\\
\widetilde R_{nr}^{(2)}&=\sum_{i\in I_{\alpha_{n+1}} }T_{\alpha_{n+1} i}\1{D_{\alpha_n i}=r} \1{ \sum_{i'\in I_n^c} Z_{ii'}\geq 2},\\
\widetilde R_{nr}^{(3)}&=\sum_{i\in  I_n^c}T_{\alpha_{n+1} i}\1{D_{\alpha_n i}=r} \1{ \sum_{i'\in I_n^c} Z_{ii'}=1}.
\end{align*}

For any $r\leq j$ 
\begin{equation*}
\begin{split}
E\left(\left.  \widetilde R_{nr}^{(2)} \right| M\right) \leq &\sum_{i\in I_{\alpha_{n+1}}}\sum_{l\neq k}^{\in I_{\alpha_{n}},\neq i} W(\vartheta_{i},\vartheta_{l})W(\vartheta_{i},\vartheta_{k})W(\vartheta_{l},\vartheta_{k})J_n(i, r-2) \Pr\left( \sum_{i'\in I_n^c}Z_{ii'}\geq 2|M\right) \\
& +2 \sum_{i  \in I_{\alpha_{n+1}}}\sum_{ l \neq i }^{\in I_{\alpha_{n}}} \sum_{ k\neq i}^{\in I_n^c} W(\vartheta_{i},\vartheta_{l})W(\vartheta_{i},\vartheta_{k})W(\vartheta_{l},\vartheta_{k})J_n(i, r-1)  \Pr\left( \sum_{i'\in I_n^c}Z_{ii'}\geq 1|M\right) \\
& + \sum_{i \in I_{\alpha_{n+1}}} \sum_{ l\neq k}^{\in I_n^c} W(\vartheta_{i},\vartheta_{l})W(\vartheta_{i},\vartheta_{k})W(\vartheta_{l},\vartheta_{k})J_n(i, r)
\end{split}
\end{equation*}
where, recalling the definition of $g_{\alpha,x}$ in Equation~\eqref{eq:defg},
$$J_n (i, r) = \sum_{i_{1}\neq i_{2}\ldots\neq i_{r}\neq l\neq k}^{\in  I_{\alpha_n}} \left[
\prod_{s=1}^{r}W(\vartheta_{i},\vartheta_{i_s})\right]  e^{-\sum_{s\neq
l,k,i_{1},\ldots,i_{r}}^{I_{\alpha_n}}g_{\alpha, \vartheta_{i}}(\theta_{s},\vartheta_{s})}.$$
Note that
\begin{align*}
 \Pr\left( \sum_{i'\in I_n^c}Z_{ii'}\geq 2|M\right) &= 1 - e^{-\sum_{s\in I_n^c}g_{\alpha_{n+1}-\alpha_n,\vartheta_{i}}(\theta_{s},\vartheta_{s})}- \sum_{i' \in I^c_n}W(\vartheta_i, \vartheta_{i'}) e^{-\sum_{s\neq i'}^{I_n^c}g_{\alpha_{n+1}-\alpha_n,\vartheta_{i}}(\theta_{s},\vartheta_{s})}. \end{align*}
Using the Slivnyak-Mecke theorem, the inequality $1-e^{-y}-ye^{-y}\leq y^2$ for $y\geq 0$, the condition \eqref{cond:localcluster} and Lemma \ref{lemma:abelianvariations2}, we obtain
\begin{align*}
E \left(  \widetilde R_{nr}^{(2)} \right) &\lesssim \alpha_{n+1}\alpha_n^{r}(\alpha_{n+1} - \alpha_n)^2 \int L_2(x) \mu(x)^{r+2}e^{-\alpha_n \mu(x)} dx\lesssim  \alpha_{n+1}\alpha_n^{\sigma-2}(\alpha_{n+1} - \alpha_n)^2\ell(\alpha_n) ,
\end{align*}
where $L_2(x)$ converges to $b\geq 0$ at infinity. Noting that
$
(\alpha_{n+1}-\alpha_n)/\alpha_n=O(1/n),
$
 we obtain
$
E \left(  \widetilde R_{nr}^{(2)} \right)  \lesssim  \alpha_n^{\sigma+1} \ell(\alpha_n) / n^2.
$
 This implies that
$\sum_n E \left(   \widetilde R_{nr}^{(2)} \right)/ (\alpha_n^{\sigma+1} \ell(\alpha_n) ) <+\infty$
so that, by Markov inequality and Borel-Cantelli lemma,
$\widetilde R_{nr}^{(2)}=o( \alpha_n^{\sigma+1} \ell(\alpha_n))$
almost surely as $n$ tends to infinity. \\We now study
$$  \widetilde R_{nr}^{(3)} := \sum_{i\in I_n^c }T_{\alpha_{n+1} i}\1{D_{\alpha_n i}=r} \1{ \sum_{i'\in I_n^c}Z_{ii'}=1}. $$
Similarly to before
\begin{equation*}
\begin{split}
E \left( \widetilde R_{nr}^{(3)} \mid M \right) & \leq\sum_{i\in I_n^c}  \sum_{ l\neq k}^{\in I_{\alpha_{n}}} W(\vartheta_{i},\vartheta_{l})W(\vartheta_{i},\vartheta_{k})W(\vartheta_{l},\vartheta_{k})J_n(i, r-2) \Pr\left( \sum_{i'\in I_n^c}Z_{ii'}=1|M\right) \\
& \quad+2\sum_{l \in I_n^c}^{l\neq i}  \sum_{ k \in I_{\alpha_{n}}} W(\vartheta_{i},\vartheta_{l})W(\vartheta_{i},\vartheta_{k})W(\vartheta_{l},\vartheta_{k})J_n(i, r-1)
\end{split}
\end{equation*}
 so that
 \begin{equation*}
\begin{split}
E\left( \widetilde R_{nr}^{(3)} \right)
& \lesssim (\alpha_{n+1} - \alpha_n)^2 \alpha_n^{r} \int L_3(x) \mu(x)^{r+1} e^{-\alpha_{n} \mu(x) } dx \lesssim   \frac{\alpha_n^{\sigma+1} \ell(\alpha_n)  }{ n^2 },
\end{split}
\end{equation*}
where $L_3(x)$ converges to $b$ and $\widetilde R_{nr}^{(3)}  = o(\alpha_n^{\sigma+1} \ell(\alpha_n)  )$ almost surely as $n$ tends to infinity.
Finally, we have
 \begin{equation*}
\begin{split}
E \left(\widetilde {R}_{nr}^{(1)} |M  \right) & \lesssim   \sum_{i\neq l\neq k}^{\in I_{\alpha_{n}}} W(\vartheta_{i},\vartheta_{l})W(\vartheta_{i},\vartheta_{k})W(\vartheta_{l},\vartheta_{k})J_n(i, r-2)\sum_{i' \in I_n^c} W(\vartheta_i, \vartheta_{i'}) \\
& \quad+2\sum_{i \neq l }^{\in I_{\alpha_{n}}} \sum_{ k \in I_n^c } W(\vartheta_{i},\vartheta_{l})W(\vartheta_{i},\vartheta_{k})W(\vartheta_{l},\vartheta_{k})J_n(i, r-1)
\end{split}
\end{equation*}
which implies that
 \begin{equation*}
\begin{split}
E \left(\frac{ \widetilde {R}^{(1)}_{nr}}{ \alpha_n^{\sigma+1} \ell_\sigma(\alpha_n)}  \right) & \quad \lesssim   \frac{ \alpha_n^{r+1}(\alpha_{n+1}-\alpha_n)}{ \alpha_n^{\sigma+1} \ell_\sigma(\alpha_n)} \int_{\mathbb R_+^{4}} L_1(x)\mu(x)^{ r+1}e^{-\alpha_n \mu(x)} dx \\
 & \quad = O\left(\frac{ \alpha_{n+1}-\alpha_n}{ \alpha_n}  \right) = o(1).
\end{split}
\end{equation*}
where $L_1(x)$ converges to $b$. Moreover, from Proposition \ref{prop:variancelocalclust} 
 $$ \var \left(\frac{ \widetilde {R}_{n r}^{(1)}}{ \alpha_n^{\sigma+1} \ell_\sigma(\alpha_n)}  \right) = O\left(  \alpha_n^{ 1-2a}  \right)
 $$
 so that, $\widetilde {R}_{nr}^{(1)}=o(\alpha_n^{1+\sigma}\ell(\alpha_n))$ almost surely. It finally follows that, for any $j\geq 1$,   $\widetilde R_{nj} = o(\alpha_n^{1+\sigma}\ell(\alpha_n))$ almost surely as $n$ tends to infinity.

\section{Proof of secondary propositions for the Central Limit Theorem}
\label{sec:proofsecondCLT}

\subsection{Proof of Proposition \ref{prop:normpart1}}%

Let
\[
Z_\alpha:=N_{\alpha}-E(N_{\alpha}\mid M)=\sum_{i}\1{\theta_{i}\leq\alpha
}(\1{D_{\alpha,i}\geq1}-(1-e^{-M(g_{\alpha,\vartheta_{i}})}))
\]

where we recall that $g_{\alpha,x}(\theta,\vartheta)    =-\log(1-W(x,\vartheta))\1{\theta\leq
\alpha}$ and
\begin{equation*}\\
e^{-M(g_{\alpha,\vartheta_{i}})}    =e^{-\sum_{j}-\log(1-W(\vartheta
_{i},\vartheta_{j}))\1{\theta_{j}\leq\alpha}}
  =\prod_{j}(1-W(\vartheta_{i},\vartheta_{j}))^{\1{\theta_{j}\leq\alpha}}%
\end{equation*}
We have $E(Z_{\alpha}\mid M)=0$ hence $\var(Z_\alpha)=E(Z_\alpha^2)$. Note that
\begin{align*}
Z_{\alpha}^{2}  &  =Z_{\alpha}+\sum_{i_{1}\neq i_{2}}\1{\theta_{i_{1}}%
\leq\alpha}(\1{D_{\alpha,i_{1}}\geq1}-(1-e^{-M(g_{\alpha,\vartheta_{i_{1}}}%
)}))\1{\theta_{i_{2}}\leq\alpha}(\1{D_{\alpha,i_{2}}\geq1}-(1-e^{-M(g_{\alpha
,\vartheta_{i_{2}}})}))\\
&  =Z_{\alpha}+\sum_{i_{1}\neq i_{2}}\1{\theta_{i_{1}}\leq\alpha}%
\1{D_{\alpha,i_{1}}\geq1}\1{\theta_{i_{2}}\leq\alpha}\1{D_{\alpha,i_{2}}\geq
1} -\sum_{i_{1}\neq i_{2}}\1{\theta_{i_{1}}\leq\alpha}\1{D_{\alpha,i_{1}}%
\geq1}\1{\theta_{i_{2}}\leq\alpha}(1-e^{-M(g_{\alpha,\vartheta_{i_{2}}})})\\
&  -\sum_{i_{1}\neq i_{2}}\1{\theta_{i_{1}}\leq\alpha}(1-e^{-M(g_{\alpha
,\vartheta_{i_{1}}})}))\1{\theta_{i_{2}}\leq\alpha}\1{D_{\alpha,i_{2}}\geq1}+\sum_{i_{1}\neq i_{2}}\1{\theta_{i_{1}}\leq\alpha}(1-e^{-M(g_{\alpha
,\vartheta_{i_{1}}})})\1{\theta_{i_{2}}\leq\alpha}(1-e^{-M(g_{\alpha
,\vartheta_{i_{2}}})})
\end{align*}
We have
\begin{align*}
E\left(  Z_{\alpha}^{2}\mid M\right)
&  =\sum_{i_{1}\neq i_{2}}\1{\theta_{i_{1}}\leq\alpha}\1{\theta_{i_{2}}%
\leq\alpha}e^{-M(g_{\alpha,\vartheta_{i_{1}}})-M(g_{\alpha,\vartheta_{i_{2}}%
})}(e^{g_{\alpha,\vartheta_{i_{1}}}(\theta_{i_{2}},\vartheta_{i_{2}})}-1)
\end{align*}
Applying the extended Slivnyak-Mecke theorem%
\begin{align*}
E\left(  Z_{\alpha}^{2}\right)   &  =\alpha^{2}\int_{\mathbb{R}%
_{+}^{2}}(1-W(x,x))(1-W(y,y))(1-W(x,y))e^{-\alpha\mu(x)-\alpha\mu(y)+\alpha
\nu(x,y)}(1-(1-W(x,y)))dxdy\\
&  \leq\alpha^{2}\int_{\mathbb{R}_{+}^{2}}W(x,y)e^{-\alpha\mu(x)-\alpha
\mu(y)+\alpha\nu(x,y)}dxdy.
\end{align*}
Using Cauchy-Schwarz inequality, $\nu(x,y)\leq\sqrt{\mu(x)\mu(y)}\leq
\frac{1}{2}(\mu(x)+\mu(y))$, and Lemma \ref{lemma:abelianvariations}, we obtain
\begin{align*}
E\left(  Z_{\alpha}^{2}\right)
&  =\alpha^{2}\int_{0}^{\infty}\mu(x)e^{-\alpha/2\mu(x)}dx\asymp
\alpha^{1+\sigma}\ell(\alpha)
=\left \{
\begin{array}{ll}
  O(\alpha^{1+\sigma}\ell_\sigma(\alpha)) & \sigma\in[0,1)\\
  o(\alpha\ell(\alpha)) & \sigma=0
\end{array}\right. .
\end{align*}

It follows from Markov's inequality that, in probability
$$
Z_{\alpha}=\left \{
\begin{array}{ll}
  O(\alpha^{1/2+\sigma/2}\ell^{1/2}_\sigma(\alpha)) & \sigma\in[0,1)\\
  o(\alpha^{1/2}\ell^{1/2}(\alpha)) & \sigma=0
\end{array}\right. .
$$

\subsection{Proof of Proposition \ref{prop:normpart2}}

Define
$
M(h_{\alpha})=\sum_{i}\widetilde{Z}_{i}%
$
where
$
\widetilde{Z}_{i}=h_{\alpha}(\theta_{i},\vartheta_{i})=\1{\theta_{i}\leq
\alpha}\left[  1-(1-W(\vartheta_{i},\vartheta_{i}))e^{-\alpha\mu(\vartheta_{i}%
)}\right]  .
$
Using Campbell's formula%
\begin{align*}
E\left(  \sum_{i}\widetilde{Z}_{i}\right)   &  =\alpha\int%
_{0}^{\infty}(1-(1-W(x,x))e^{-\alpha\mu(x)})dx=E(N_{\alpha})\\
\var\left(  \sum_{i}\widetilde{Z}_{i}\right)   &  =\alpha\int_{0}^{\infty
}\left[  1-(1-W(x,x))e^{-\alpha\mu(x)}\right]  ^{2}dx\leq E(N_{\alpha
})
\end{align*}
Noting that $E(N_{\alpha
})\sim\alpha^{1+\sigma}\Gamma(1-\sigma)\ell(\alpha)$, it follows from Chebyshev's inequality that, in probability,
\begin{align*}
\sum_{i}\widetilde{Z}_{i}-E(N_{\alpha})  &  =O\left(
\sqrt{\var\left(  \sum_{i}\widetilde{Z}_{i}\right)  }\right) =O\left(  \alpha^{1/2+\sigma/2}\ell_\sigma^{1/2}(\alpha)\right)
\end{align*}
If $\mu$ has an unbounded support then, under Assumption \ref{assumpt:1}, either $\sigma>0$ or $\sigma=0$ and $\ell(t)\to\infty$. In both cases, in probability,
$\sum_{i}\widetilde{Z}_{i}-E(N_{\alpha}) =o\left(  \alpha^{1/2+\sigma}\ell_\sigma(\alpha)\right).$

\subsection{Proof of Proposition \ref{prop:normpart3}}
\label{sec:proofCLTsparse}

Let
\begin{align*}
f_{\alpha}(M)
&  =\sum_{i}\1{\theta_{i}\leq\alpha}\left[  (1-W(\vartheta_{i},\vartheta
_{i}))e^{-\alpha\mu(\vartheta_{i})}-e^{-M(g_{\alpha,\vartheta_{i}})}\right]
\end{align*}

The idea is to use Theorem 1.1 from \cite{Last2016}. To do so, define
\begin{equation}
F_\alpha=\frac{f_{\alpha}(M)}{\sqrt{v_{\alpha}}}
\end{equation} where $v_{\alpha}= \var(f_{\alpha}(M))\sim \var(N_{\alpha} )\asymp\alpha^{1+2\sigma}\ell_\sigma^{2}(\alpha)$. Note that $E(F_\alpha)=0$ and $\var(F_\alpha)=1$. Consider the difference operator $D_{z}F_\alpha$ defined by%
\[
D_{z}F_\alpha=\frac{1}{\sqrt{v_\alpha}}( f_\alpha(M+\delta_{z})-f_\alpha(M))
\]
Also%
\begin{align*}
D_{z_1,z_2}^{2}F_\alpha  &  =D_{z_2}(D_{z_1}F_\alpha)=D_{z_2}\left(\frac{1}{\sqrt{v_\alpha}} (f_\alpha(M+\delta_{z_1})-f_\alpha(M))\right )\\
&  =\frac{1}{\sqrt{v_\alpha}}\left( f_\alpha(M+\delta_{z_1}+\delta_{z_2})- f_\alpha(M+\delta_{z_1})- f_\alpha(M+\delta_{z_2}
)+ f_\alpha(M)\right ).
\end{align*}
Define
\begin{align*}
\gamma_{\alpha,1}  &  :=2\left(  \int_{\mathbb{R}_{+}^{6}}\sqrt{\mathbb{E}(D_{z_{1}%
}F_\alpha)^{2}(D_{z_{2}}F_\alpha)^{2}}\sqrt{\mathbb{E}(D_{z_{1},z_{3}}^{2}F_\alpha)^{2}%
(D_{z_{2},z_{3}}^{2}F_\alpha)^{2}}dz_{1}dz_{2}dz_{3}\right)  ^{1/2}\\
\gamma_{\alpha,2}  &  :=\left(  \int_{\mathbb{R}_{+}^{6}}\mathbb{E}\left [(D_{z_{1},z_{3}%
}^{2}F_\alpha)^{2}(D_{z_{2},z_{3}}^{2}F_\alpha)^{2}\right ]dz_{1}dz_{2}dz_{3}\right)
^{1/2}\\
\gamma_{\alpha,3}  &  :=\int_{\mathbb{R}_{+}^{2}}\mathbb{E}|D_{z}F_\alpha|^{3}dz
\end{align*}

We state a corollary of Theorem 1.1 from \cite{Last2016}.
\begin{corollary}{\cite[Theorem 1.1]{Last2016}}
If $\gamma_{\alpha,1},\gamma_{\alpha,2},\gamma_{\alpha,3}\rightarrow0,$ then
$$F_\alpha=\frac{f_{\alpha}(M)}{\sqrt{v_{\alpha}}}\to\mathcal N(0,1).$$
\end{corollary}
The rest of the proof aims to show that $\gamma_{\alpha,1},\gamma_{\alpha,2},\gamma_{\alpha,3}\rightarrow0$. The proof is rather lengthy and therefore split in different subsections. We first state a few notations and lemmas that will be useful in the following.

\subsubsection{Definitions and lemmas}

The following lemma, obtained with H\"older's inequality, will be used multiple times.
\begin{lemma}\label{lemma:mu}
For any $d\geq 1$ and any $z_1,\ldots,z_d>0$,
\begin{align*}
E\left ( \prod_{k=1}^d e^{-M(g_{\alpha,z_k})}  \right )&\leq e^{-\frac{\alpha}{d}\sum_{k=1}^d \mu(z_k)}.
\end{align*}
\end{lemma}
\begin{proof}
Using H\"older's inequality, for any $d\geq 1$
\begin{align*}
E\left ( \prod_{k=1}^d e^{-M(g_{\alpha,z_k})}  \right )&\leq \prod_{k=1}^d E\left (  e^{-d M(g_{\alpha,z_k})}  \right )^{1/d}= \prod_{k=1}^d e^{-\frac{\alpha}{d}\int_0^\infty (1-[1-W(z_k, y)]^d) dy}\\
&\leq \prod_{k=1}^d e^{-\frac{\alpha}{d}\int_0^\infty (1-[1-W(z_k, y)]) dy}=e^{-\frac{\alpha}{d}\sum_{k=1}^d \mu(z_k)}
\end{align*}
\end{proof}

For $i,j\geq 0$, let
\begin{align}
H_{i,j}(x_1,x_2)&=\int_{\mathbb R_+^2}W(x_1,y)^iW(x_2,y)^je^{-\frac{\alpha}{4}\mu(y)}dy , \quad H_{i}(x)  = H_{i,0}(x,x). \label{eq:H1}
\end{align}
The following lemma compiles various useful bounds.
\begin{lemma}\label{lemma:bound} Assume Assumptions 1 and 5. Then
\begin{itemize}
\item For all $j\geq 1$ and all $x_1,\ldots,x_{j-1}>0$, $y_1>0$ and $p_1,\ldots,p_{j}\geq 1$, $\alpha>0$
\begin{align*}
&\int_0^\infty \left (\int_0^\infty W(y_1,x_j) ^{p_j}\left [\prod_{k=1}^{j-1} W(y_1,x_k)^{p_k}\right ]e^{-\alpha\mu(y_1) } dy_1 \right )^{1/2}W(y_2,x_j)dx_j\\
&\leq L(y_2)\mu(y_2) \left (\int L(y_1)^{p_j}\mu(y_1)^{p_j} \left [\prod_{k=1}^{j-1} W(y_1,x_k)^{p_k}\right ]e^{-\alpha\mu(y_1) } dy_1 \right )^{1/2}
\end{align*}
\item For any $x_2>0$,
\begin{align}
\int H_{1,1}(x_1,x_2)^2d x_1
&\leq \int L(y_1)L(y_2)\mu(y_1)\mu(y_2)W(x_2,y_1)W(x_2,y_2)e^{-\frac{\alpha}{4}(\mu(y_1)+\mu(y_2))}dy_1dy_2\label{eq:boundH2square}
\end{align}
\item For any $q=1,2,\ldots$ \begin{align}
\int_0^\infty H_1(x)^{q}dx&=\int \prod_{i=1}^q \int W(x,y_i) e^{-\frac{\alpha}{4} \mu(y_i)}dy_i dx
&=O(\alpha^{q\sigma-q}\ell_\sigma(\alpha)^q)\label{eq:boundintH1}
\end{align}
\item   For any $q\geq 1$, $p\geq 1$,
\begin{align}
\int (\alpha H_1(x_1))^{q/2}L(x_1)\mu(x_1)^{p/2} e^{-\frac{\alpha}{4}\mu(x_1)}dx_1
&=O(\alpha^{(q+1)\sigma/2 -p/2 }\ell_\sigma(\alpha)^{(q+1)/2})\label{eq:boundH1a}
\end{align}
\item If $q\leq 3$,
\begin{align}
\int L(x_1)\mu(x_1)^p W(x_1,x_2)^q e^{-\frac{\alpha}{4}\mu(x_1)}dx_1&\leq \mu(x_2)^q L(x_2)^q\left (\int L(x_1)^2\mu(x_1)^{2p}  e^{-\frac{\alpha}{2}\mu(x_1)}dx_1\right)^2 .\label{eq:boundH1c}
\end{align}
\end{itemize}
\end{lemma}

\begin{proof}
The first inequality comes from H\"older's inequality, together with Assumptions \ref{assumpt:5} and \ref{assumpt:1}.
Also \eqref{eq:boundH2square} is a consequence of
$$ \int H_{1,1}(x_1,x_2)^2d x_1 =\int \nu(y_1,y_2)W(x_2,y_1)W(x_2,y_2)e^{-\frac{\alpha}{4}(\mu(y_1)+\mu(y_2))}dy_1dy_2$$

Under Assumption \ref{assumpt:5},
For any $q=1,2,\ldots$, using Assumption \ref{assumpt:1}, \ref{assumpt:5}, and Lemma \ref{lemma:abelianvariations2},
\begin{align*}
\int_0^\infty H_1(x)^{q}dx&=\int \prod_{i=1}^q \int W(x,y_i) e^{-\frac{\alpha}{4} \mu(y_i)}dy_i dx\nonumber \\
&\leq \prod_{i=1}^q \int L(y_i)\mu(y_i) e^{-\frac{\alpha}{4} \mu(y_i)}dy_i\nonumber=O(\alpha^{q\sigma-q}\ell_\sigma(\alpha)^q).
\end{align*}
Using H\"older's inequality,
\begin{align*}
\int (\alpha H_1(x_1))^{q/2}L(x_1)\mu(x_1)^{p/2} e^{-\frac{\alpha}{4}\mu(x_1)}dx_1&\leq \left (\int (\alpha H_1(x_1))^{q}dx_1 \int L(x_1)^2\mu(x_1)^{p} e^{-\frac{\alpha}{2}\mu(x_1)}dx_1    \right )^{1/2}\nonumber\\
&=O(\alpha^{(q+1)\sigma/2 -p/2 }\ell_\sigma(\alpha)^{(q+1)/2})
\end{align*}
which proves \eqref{eq:boundH1a}. Finally
recall that from Lemma \ref{lemma:abelianvariations2} that for any $p\geq 1$,
\begin{align*}
\int L(x_1)\mu(x_1)^pe^{-\frac{\alpha}{4}\mu(x_1)}dx_1&=O(\alpha^{\sigma -p }\ell_\sigma(\alpha)).
\end{align*}
so that using H\"older and Assumption \ref{assumpt:5}, for any $q\leq 3$
\begin{align*}
\int L(x_1)\mu(x_1)^p W(x_1,x_2)^q e^{-\frac{\alpha}{4}\mu(x_1)}dx_1&\leq \mu(x_2)^q L(x_2)^q\left (\int L(x_1)^2\mu(x_1)^{2p}  e^{-\frac{\alpha}{2}\mu(x_1)}dx_1\right)^2
\end{align*}

\end{proof}

\subsubsection{General bounds}

Let
$z=(t,x)$. Recall that $g_{\alpha,x}(\theta,\vartheta)=-\log(1-W(x,\vartheta))\1{\theta%
\leq\alpha}$.%
\begin{align*}
\sqrt{v_{\alpha}}\times D_{z}F_\alpha
 &  =\1{t\leq\alpha}(1-W(x,x))\left[  e^{-\alpha\mu(x)}-e^{-M(g_{\alpha,x})}\right]   +\1{t\leq\alpha}\sum_{i}\1{\theta_{i}\leq\alpha}W(\vartheta_{i},x)e^{-M(g_{\alpha ,\vartheta_{i}})}.%
\end{align*}
We have
\begin{align}
\sqrt{v_\alpha}\left\vert D_{z}F_\alpha\right\vert
& \leq \1{t\leq\alpha}\left( \left|e^{-\alpha\mu(x)}-e^{-M(g_{\alpha,x})}\right| +\sum_{i}\1{\theta_{i}\leq\alpha}W(\vartheta_{i},x)e^{-M(g_{\alpha,\vartheta_{i}})}  \right)\label{eq:Dz}
\end{align}

Similarly,
\begin{align*}
\sqrt{v_\alpha}D^2_{z_1,z_2}(F_{\alpha})
&  =\1{t_1, t_2\leq\alpha} W(x_1,x_2)\left[  (1-W(x_1,x_1)) e^{-M(g_{\alpha,x_1})}
 + (1-W(x_2,x_2))e^{-M(g_{\alpha,x_2})}\right] \\
&  -\1{t_1, t_2\leq\alpha}\sum_{i}\1{\theta_{i}\leq\alpha} W(\vartheta_i,x_1)W(\vartheta_i,x_2) e^{-M(g_{\alpha,\vartheta_{i}}) }
\end{align*}

Note that the above is equal to 0 if $t_1>\alpha$ or $t_2>\alpha$. For $t_1,t_2\leq \alpha$
\begin{align*}
|\sqrt{v_{\alpha}}\times D^2_{z_1,z_2}F_\alpha|^2&\leq 2
W(x_1,x_2)^2(e^{-M(g_{\alpha,x_1})}+e^{-M(g_{\alpha,x_2})})^2 \\
&  +2\left (\sum_{i}\1{\theta_{i}\leq\alpha} W(\vartheta_i,x_1)W(\vartheta_i,x_2) e^{-M(g_{\alpha,\vartheta_{i}})}\right )^2\\
&\leq 4 W(x_1,x_2)^2(e^{-M(g_{\alpha,x_1})}+e^{-M(g_{\alpha,x_2})})+2\sum_{i}\1{\theta_{i}\leq\alpha} W(\vartheta_i,x_1)^2 W(\vartheta_i,x_2)^2 e^{-2M(g_{\alpha,\vartheta_{i}})}\\
&+2\sum_{i\neq j}\1{\theta_{i}\leq\alpha}\1{\theta_{j}\leq\alpha} W(\vartheta_i,x_1)W(\vartheta_i,x_2)W(\vartheta_j,x_1)W(\vartheta_j,x_2) e^{-M(g_{\alpha,\vartheta_{i}})-M(g_{\alpha,\vartheta_{j}})}
\end{align*}
\begin{align*}
&v_\alpha^2(D_{z_{1},z_{3}%
}^{2}F_\alpha)^{2}(D_{z_{2},z_{3}}^{2}F_\alpha)^{2}\nonumber\\
&\leq 16 W(x_1,x_3)^2W(x_2,x_3)^2(e^{-M(g_{\alpha,x_1})}+e^{-M(g_{\alpha,x_3})})(e^{-M(g_{\alpha,x_2})}+e^{-M(g_{\alpha,x_3})})\nonumber\\
&\quad+8W(x_1,x_3)^2(e^{-M(g_{\alpha,x_1})}+e^{-M(g_{\alpha,x_3})})\left (\sum_{i}\1{\theta_{i}\leq\alpha} W(\vartheta_i,x_2)W(\vartheta_i,x_3) e^{-M(g_{\alpha,\vartheta_{i}})}\right )^2\nonumber\\
&\quad+8W(x_2,x_3)^2(e^{-M(g_{\alpha,x_2})}+e^{-M(g_{\alpha,x_3})})\left (\sum_{i}\1{\theta_{i}\leq\alpha} W(\vartheta_i,x_1)W(\vartheta_i,x_3) e^{-M(g_{\alpha,\vartheta_{i}})}\right )^2\nonumber\\
&\quad+4\sum_{i_1,i_2,i_3,i_4}\1{\theta_{i_1}\leq\alpha}\1{\theta_{i_2}\leq\alpha}\1{\theta_{i_3}\leq\alpha}\1{\theta_{i_4}\leq\alpha} W(\vartheta_{i_1},x_1)W(\vartheta_{i_1},x_3)W(\vartheta_{i_2},x_1)W(\vartheta_{i_2},x_3)\nonumber\\
&\qquad \quad\quad\times W(\vartheta_{i_3},x_2)W(\vartheta_{i_3},x_3)W(\vartheta_{i_4},x_2)W(\vartheta_{i_4},x_3)  e^{-\sum_{k=1}^4 M(g_{\alpha,\vartheta_{i_k}})}
\end{align*}

We obtain, using the inequality~\eqref{eq:boundH2square}
\begin{align}
&E\left (v_\alpha^2(D_{z_{1},z_{3}%
}^{2}F)^{2}(D_{z_{2},z_{3}}^{2}F)^{2}\right ) \nonumber\\
&\leq C\times\left ( W(x_1,x_3)^2W(x_2,x_3)^2(e^{-\alpha/2\mu(x_1))}+e^{-\alpha/2\mu(x_3)})(e^{-\alpha/2\mu(x_2)}+e^{-\alpha/2 \mu(x_3)})\right .\nonumber\\
&+(\alpha^2H_{1,1}(x_2,x_3)^2+\alpha H_{2,2}(x_2,x_3)) W(x_1,x_3)^2(e^{-\alpha/3 \mu(x_1)}+e^{-\alpha/3 \mu(x_3))})\nonumber\\
&+(\alpha^2H_{1,1}(x_1,x_3)^2+\alpha H_{2,2}(x_1,x_3)) W(x_2,x_3)^2(e^{-\alpha/3 \mu(x_2)}+e^{-\alpha/3 \mu(x_3))})\nonumber\\
&\left .+A_2(x_1,x_2,x_3)\right )\label{eq:longinequality}
\end{align}
for some constant $C>0$, where
\bgroup
\allowdisplaybreaks
\begin{align*}
&A_2(x_1,x_2,x_3)\\
&=E\left (\sum_{i_1,i_2,i_3,i_4}\prod_{\ell = 1}^4W(\vartheta_{i_\ell },x_3)\1{\theta_{i_\ell}\leq\alpha} W(\vartheta_{i_1},x_1)W(\vartheta_{i_2},x_1)
 W(\vartheta_{i_3},x_2)W(\vartheta_{i_4},x_2) e^{-\sum_{k=1}^4 M(g_{\alpha,\vartheta_{i_k}})}\right )\\
&=\alpha^4 \int \prod_{\ell = 1}^4W(y_\ell,x_3)  W(y_1,x_1)W(y_2,x_1)W(y_2,x_3)W(y_3,x_2)W(y_4,x_2) e^{-\alpha/4 (\sum_{i=1}^4\mu(y_i))}dy_{1:4} \\
&\quad+\alpha^3 \int  (W(y_1,x_1)^2W(y_1,x_3)^2W(y_2,x_2)W(y_2,x_3)W(y_3,x_2)W(y_3,x_3)\\
&\qquad \quad\quad +W(y_1,x_2)^2W(y_1,x_3)^2W(y_2,x_1)W(y_2,x_3)W(y_3,x_1)W(y_3,x_3)   ) e^{-\alpha/3 (\sum_{i=1}^3\mu(y_i))}dy_{1:3}\\
&\quad+\alpha^2 \int  (W(y_1,x_1)^2W(y_1,x_3)^2W(y_2,x_2)^2W(y_2,x_3)^2\\
&\qquad \quad\quad +W(y_1,x_2)W(y_1,x_1)W(y_1,x_3)^2W(y_2,x_1)W(y_2,x_2)W(y_2,x_3)^2   ) e^{-\alpha/2 (\sum_{i=1}^2\mu(y_i))}dy_{1:2}\\
&\quad\left .+\alpha \int  W(y_1,x_1)^2W(y_1,x_2)^2W(y_1,x_3)^4  e^{-\alpha \mu(y_1)}dy_1\right )
\end{align*}
\egroup

\subsubsection{Proof that $\gamma_{\alpha,3}\rightarrow 0$}
We show here that $\gamma_{\alpha,3}\rightarrow 0$, or equivalently $\int E \left\vert \sqrt{v_\alpha} D_{z}F\right\vert^3dz=o(\alpha^{3/2+3\sigma}\ell_\sigma^{3}(\alpha))$. From Equation~\eqref{eq:Dz} and using the inequality $(a+b)^3\leq 4(a^3+b^3)$ for any $a,b\geq 0$, a sufficient condition is
$$
\int_0^\infty  E \left [\left |e^{-\alpha\mu(x)}-e^{-M(g_{\alpha,x})}\right |^3 + \left (\sum_{i}\1{\theta_{i}\leq\alpha}W(\vartheta_{i},x)e^{-M(g_{\alpha,\vartheta_{i}})}\right )^3\right ]dx=o(\alpha^{1/2+3\sigma}\ell_\sigma^{3}(\alpha)).
$$

We have
\begin{align*}
\int_0^\infty \mathbb E \left [\left |e^{-\alpha\mu(x)}-e^{-M(g_{\alpha,x})}\right |^3 \right] dx
 &\leq 2
 \int E\left (\left (e^{-\alpha\mu(x)}-e^{-M(g_{\alpha,x})}\right )^2\right)dx \\
&\leq \int (1-e^{-2\alpha\mu(x)})dx=O(\alpha^\sigma\ell_\sigma(\alpha))\\
\end{align*}
Also  under Assumptions \ref{assumpt:1} and \ref{assumpt:5}, using Lemma \ref{lemma:mu}

\begin{align*}
&\int_0^\infty  E\left [ \left(\sum_{i}\1{\theta_{i}\leq\alpha}W(\vartheta_{i},x)e^{-M(g_{\alpha,\vartheta_{i}})}\right)^3 \right ]dx
  \leq
 \int_0^\infty  E \left[ \sum_{i_1,i_2,i_3}  \prod_{\ell=1}^3 \1{\theta_{i_\ell}\leq\alpha}W(\vartheta_{i_\ell},x)e^{-M(g_{\alpha,\vartheta_{i_\ell}}) } \right] dx \\
& \leq \alpha^3 \left( \int L(y) \mu(y) e^{-\alpha \mu(y)/3} dy \right)^3 + 3 \alpha^2 \left( \int L(y)^2 \mu(y)^2 e^{-\alpha \mu(y)/3} dy \right)\left( \int L(y) \mu(y) e^{-\alpha \mu(y)/3} dy \right)\\
 & +\alpha \int L(y)^3 \mu(y)^3 e^{-\alpha \mu(y)/3} dy  = O( \alpha^{ 3\sigma} \ell_\sigma(\alpha)^3 ) .
\end{align*}

It follows that $\gamma_{\alpha,3}\to 0$ as $\alpha\to\infty$.

\subsubsection{Proof that $\gamma_{\alpha,2}\rightarrow 0$}
We now need to show that the integral of the right hand-side of Equation \eqref{eq:longinequality} with respect to $x_1,x_2,x_3$ is $o(\alpha^{-3}v_\alpha^2)=o(\alpha^{-1+4\sigma}\ell_\sigma^2(\alpha))$.
For the first term in the right hand-side of the inequality \eqref{eq:longinequality}, we have
\begin{align*}
&\int W(x_1,x_3)^2W(x_2,x_3)^2(e^{-\frac{\alpha}{2}\mu(x_1))}+e^{-\frac{\alpha}{2}\mu(x_3)})(e^{-\frac{\alpha}{2}\mu(x_2)}+e^{-\frac{\alpha}{2} \mu(x_3)})dx_1dx_2dx_3\\
&\leq 3\int W(x_1,x_3)W(x_2,x_3)(e^{-\frac{\alpha}{2}(\mu(x_1)+\mu(x_2))}+e^{-\frac{\alpha}{2}\mu(x_3)})dx_1dx_2dx_3\\
&\leq 3\int \nu(x_1,x_2)e^{-\frac{\alpha}{2}(\mu(x_1)+\mu(x_2))}dx_1dx_2+3\int \mu(x_3)^2 e^{-\frac{\alpha}{2}\mu(x_3)}dx_3 \\
&=O(\alpha^{2\sigma-2}\ell_\sigma^2(\alpha))+O(\alpha^{\sigma-2}\ell_\sigma(\alpha))\end{align*}

For the second line (and similarly for the third line) in the RHS of Equation \eqref{eq:longinequality}, we have, noting that $H_{2,2}(x_2,x_3)\leq H_{1,1}(x_2,x_3)$
\begin{align*}
&\int (\alpha^2H_{1,1}(x_2,x_3)^2+\alpha H_{1,1}(x_2,x_3)) W(x_1,x_3)^2(e^{-\alpha/3 \mu(x_1)}+e^{-\alpha/3 \mu(x_3))})dx_1dx_2dx_3\\
&\leq \alpha^2 \int W(x_1,x_3)^2 W(y_1,x_2)W(y_1,x_3)W(y_2,x_2)W(y_2,x_3)\\
&\times(e^{-\alpha/3(\mu(y_1)+\mu(y_2)+\mu(x_1))}+e^{-\alpha/3(\mu(y_1)+\mu(y_2)+\mu(x_3))})dx_1dx_2dx_3dy_1dy_2\\
&+\alpha \int W(x_1,x_3)^2 W(y_1,x_2)W(y_1,x_3)(e^{-\alpha/3(\mu(y_1)+\mu(x_1))}+e^{-\alpha/3(\mu(y_1)+\mu(x_3))})dx_1dx_2dx_3dy_1\\
&= \alpha^2 \int W(x_1,x_3)^2 \nu(y_1,y_2)W(y_1,x_3)W(y_2,x_3)\\
&\times(e^{-\alpha/3(\mu(y_1)+\mu(y_2)+\mu(x_1))}+e^{-\alpha/3(\mu(y_1)+\mu(y_2)+\mu(x_3))})dx_1dx_3dy_1dy_2\\
&+\alpha \int W(x_1,x_3)^2 \mu(y_1)W(y_1,x_3)(e^{-\alpha/3(\mu(y_1)+\mu(x_1))}+e^{-\alpha/3(\mu(y_1)+\mu(x_3))})dx_1dx_3dy_1\\
&\leq 2\alpha^2 \int  L(x_1)^2 L(y_1)L(y_2)\mu(x_1)^2\mu(y_1)\mu(y_2)e^{-\alpha/3(\mu(y_1)+\mu(y_2)+\mu(x_1))}dx_1dy_1dy_2\\
&+\alpha \int  \mu(y_1)^2 L(y_1)L(x_1)^2\mu(x_1)^2e^{-\alpha/3(\mu(y_1)+\mu(x_1))}dx_1dy_1+\alpha \int \mu(x_3)^2 L(x_3)^2 \mu(y_1)e^{-\alpha/3(\mu(y_1)+\mu(x_3))}dx_3dy_1\\
&=O(\alpha^{3\sigma-2}\ell_\sigma(\alpha)^3)
\end{align*}
using Assumption \ref{assumpt:5} and Lemma \ref{lemma:abelianvariations2}. For the third term in the right-handside of Equation \eqref{eq:longinequality}, we obtain
\begin{align*}
&\int A_2(x_1,x_2,x_3)dx_1dx_2dx_3\\ &\leq \alpha^4 \left(\int  L(y)^2\mu(y)^2 e^{-\alpha/4\mu(y)}dy\right)^4 +\alpha^3 \int  L(y_1)^4\mu(y_1)^4 e^{-\alpha/3\mu(y_1)}dy_1 \left(\int  L(y)^2\mu(y)^2 e^{-\alpha/3\mu(y)}dy\right)^2\\
&\quad+\alpha^2\left(\int  L(y)^4\mu(y)^4 e^{-\alpha/2\mu(y)}dy\right)^2+\alpha \int  L(y)^8\mu(y)^8 e^{-\alpha\mu(y)}dy=O(\alpha^{4\sigma-4})\ell_\sigma^4(\alpha)
\end{align*}

It follows that $\gamma_{\alpha,2}\to 0$ as $\alpha\to\infty$.

\subsubsection{Proof that $\gamma_{\alpha,1}\rightarrow 0$}

For any $x>0$ and any unit-rate Poisson point measure $M$ on $\mathbb R_+^2$, denote
\begin{align}
r_\alpha(x,M)=e^{-\alpha\mu(x)}+e^{-M(g_{\alpha,x})}
\end{align}

For any $z_1=(t_1,x_1),z_2=(t_2,x_2)$, if $t_1>\alpha$ or $t_2>\alpha$,  then $\left\vert D_{z_1}(F_{\alpha})\right\vert^2\left\vert D_{z_2}(F_{\alpha})\right\vert^2=0$. Otherwise, if $t_1,t_2\leq\alpha$, we have from Equation~\eqref{eq:Dz},
\begin{align*}
&v_\alpha\left\vert D_{z_1}(F_{\alpha})\right\vert^2\left\vert D_{z_2}(F_{\alpha})\right\vert^2\\  &  \leq
\left (4r_\alpha(x_1,M)+2 \sum_{i,j}\1{\theta_{i}\leq\alpha}\1{\theta_{j}\leq\alpha}W(\vartheta_{i},x_1)W(\vartheta_{j},x_1)e^{-M(g_{\alpha,\vartheta_{i}})-M(g_{\alpha,\vartheta_{j}})}\right )
\\
&\quad \times\left (4r_\alpha(x_2,M)+2\sum_{i,j}\1{\theta_{i}\leq\alpha}\1{\theta_{j}\leq\alpha}W(\vartheta_{i},x_2)W(\vartheta_{j},x_2)e^{-M(g_{\alpha,\vartheta_{i}})-M(g_{\alpha,\vartheta_{j}})}\right )\\
&\leq 16r_\alpha(x_1,M)r_\alpha(x_2,M) \\
&+8\sum_{i,j}\1{\theta_{i},\theta_{j}\leq\alpha}\left \{ W(\vartheta_{i},x_1)W(\vartheta_{j},x_1)r_\alpha(x_2,M)+W(\vartheta_{i},x_2)W(\vartheta_{j},x_2)r_\alpha(x_1,M)\right) e^{-M(g_{\alpha,\vartheta_{i}})-M(g_{\alpha,\vartheta_{j}})}\\
&+4 \sum_{i_1,i_2,i_3,i_4} W(\vartheta_{i_1},x_1)W(\vartheta_{i_2},x_1)W(\vartheta_{i_3},x_2)W(\vartheta_{i_4},x_2)\prod_{k=1}^4 \left ( \1{\theta_{i_k}\leq\alpha}e^{-M(g_{\alpha,\vartheta_{i_k}})} \right ).
\end{align*}
Note that, using Campbell theorem,  together with Lemma \ref{lemma:mu}
\begin{equation*}
\begin{split}
E(r_\alpha(x_1,M)r_\alpha(x_2,M)) \leq e^{-\alpha (\mu(x_1)+\mu(x_2))/2}
\end{split}
\end{equation*}
 It follows that, using the extended Slivnyak-Mecke theorem
\begin{align*}
&v_\alpha^2 E\left (\left\vert D_{z_1}(F_{\alpha})\right\vert^2\left\vert D_{z_2}(F_{\alpha})\right\vert^2\right)\\&\leq C\left ( e^{-\frac{\alpha}{2}(\mu(x_1)+\mu(x_2))}+\alpha\int_0^\infty (W(y,x_1)e^{-\frac{\alpha}{2}\mu(x_2)}+W(y,x_2)e^{-\frac{\alpha}{2}\mu(x_1)})e^{-\frac{\alpha}{2}\mu(y)}dy\right .\\
&+\alpha^2\int_{\mathbb R_+^2}\Big \{(W(y_1,x_1)W(y_2,x_1)e^{-\frac{\alpha}{3}\mu(x_2)}+W(y_1,x_2)W(y_2,x_2)e^{-\frac{\alpha}{3}\mu(x_1)}\Big \}e^{-\alpha\mu(y_1)/3-\alpha\mu(y_2)/3}dy_1dy_2\\
&+\alpha^3\int_{\mathbb R_+^3}(W(y_1,x_1)^2W(y_2,x_2)W(y_3,x_2)+W(y_1,x_2)^2W(y_2,x_1)W(y_3,x_1))e^{-\alpha\sum_{k=1}^3\mu(y_k)/3}dy_1dy_2dy_3\\
&\left .+\alpha^4 \int_{\mathbb R_+^4} W(y_1,x_1)W(y_2,x_1)W(y_3,x_2)W(y_4,x_2)e^{-\alpha\sum_{k=1}^4\mu(y_k)/4}dy_1dy_2dy_3dy_4 \right )\\
&\leq C\left ( e^{-\frac{\alpha}{3}(\mu(x_1)+\mu(x_2))}+\alpha(H_1(x_1)e^{-\frac{\alpha}{3}\mu(x_2)}+H_1(x_2)e^{-\frac{\alpha}{3}\mu(x_1)})+\alpha^2 (H_1(x_1)^2e^{-\frac{\alpha}{3}\mu(x_2)}+H_1(x_2)^2e^{-\frac{\alpha}{3}\mu(x_1)}) \right. \\
 & \left. \quad +\alpha^3(H_1(x_1)^2H_{2,0}(x_2)+H_{2,0}(x_1)H_1(x_2)^2) +\alpha^4H_1(x_1)^2H_1(x_2)^2\right )
\end{align*}
where $H_{i,j}$ are defined in Equation \eqref{eq:H1}; therefore, for any $t_1,t_2\leq\alpha$,  using the fact that $H_{2,0} \leq H_1$ and that $\sqrt{\sum_{i=1}^p a_i} \leq \sqrt{p} \sum_{i=1}^p\sqrt{a_i}$
\begin{align*}
v_\alpha\sqrt{\mathbb E\left\vert D_{z_1}F_\alpha\right\vert^2\left\vert D_{z_2}F_\alpha\right\vert^2}&\leq \sqrt{6C}\sum_{q_1=0}^2\sum_{q_2=0}^2  (\alpha H_1(x_1))^{q_1/2}(\alpha H_1(x_2))^{q_2/2} e^{-\frac{\alpha}{6}(\mu(x_1)\1{q_1=0}+\mu(x_2)\1{q_2=0})}
\end{align*}

Additionally, from Equation \eqref{eq:longinequality}, we have
\begin{align*}
&v_\alpha\int \sqrt{E\left ((D_{z_{1},z_{3}%
}^{2}F_\alpha)^{2}(D_{z_{2},z_{3}}^{2}F_\alpha)^{2}\right )}dx_3 \nonumber\\
&\leq C\times\Bigg ( \underbrace{\int W(x_1,x_3)W(x_2,x_3)(e^{-\frac{\alpha}{4}(\mu(x_1)+\mu(x_2))}+e^{-\alpha\mu(x_3)/4})dx_3}_{B_{\alpha,1}(x_1,x_2)}\Bigg .\nonumber\\
&\quad\quad+\underbrace{\int(\alpha H_{1,1}(x_2,x_3)+\sqrt{\alpha H_{2,2}(x_2,x_3)}) W(x_1,x_3)(e^{-\alpha/6 \mu(x_1)}+e^{-\alpha/6 \mu(x_3))})dx_3}_{B_{\alpha,2}(x_1,x_2)}\nonumber\\
&\qquad+\int(\alpha H_{1,1}(x_1,x_3)+\sqrt{\alpha H_{2,2}(x_1,x_3)}) W(x_2,x_3)(e^{-\alpha/6 \mu(x_2)}+e^{-\alpha/6 \mu(x_3))})dx_3\nonumber\\
&\qquad\Bigg .+\underbrace{\int\sqrt{A_2(x_1,x_2,x_3)}dx_3}_{B_{\alpha,3}(x_1,x_2)}\Bigg )
\end{align*}
for some constant $C$. To show that $\gamma_{\alpha,1}\to0$, we aim to show that, for any $q_1,q_2\in\{0,1,2\}$, and any $k=1,2,3$
\begin{align*}
I_{\alpha,k}(q_1,q_2)&:=\int (\alpha H_1(x_1))^{q_1/2}(\alpha H_1(x_2))^{q_2/2} e^{-\frac{\alpha}{6}(\mu(x_1)\1{q_1=0}+\mu(x_2)\1{q_2=0})}  B_{\alpha,k}(x_1,x_2)dx_1dx_2\\
&=o(\alpha^{4\sigma-1}\ell_\sigma(\alpha)^4)
\end{align*}
Consider first
\begin{align*}
I_{\alpha,1}(q_1,q_2)&=\int (\alpha H_1(x_1))^{q_1/2}(\alpha H_1(x_2))^{q_2/2} e^{-\frac{\alpha}{6}(\mu(x_1)\1{q_1=0}+\mu(x_2)\1{q_2=0})} \\
&\quad\quad\times W(x_1,x_3)W(x_2,x_3)(e^{-\frac{\alpha}{4}(\mu(x_1)+\mu(x_2))}+e^{-\alpha\mu(x_3)/4})dx_1dx_2dx_3\\
&\leq \int (\alpha H_1(x_1))^{q_1/2}L(x_1)\mu(x_1)e^{-\frac{\alpha}{4}\mu(x_1)}dx_1
\int (\alpha H_1(x_2))^{q_2/2}L(x_2)\mu(x_2)e^{-\frac{\alpha}{4}\mu(x_2)}dx_2
\\
&\quad+\int (\alpha H_1(x_1))^{q_1/2}(\alpha H_1(x_2))^{q_2/2}W(x_1,x_3)W(x_2,x_3) e^{-\alpha\mu(x_3)/4}dx_1dx_2dx_3
\end{align*}

For $q_1,q_2\geq 1$, using H\"older's inequality and Assumptions \ref{assumpt:1} and \ref{assumpt:5},
\begin{align*}
&\int (\alpha H_1(x_1))^{q_1/2}(\alpha H_1(x_2))^{q_2/2}W(x_1,x_3)W(x_2,x_3) e^{-\alpha/4\mu(x_3)}dx_1dx_2dx_3\\
&\leq \left (\int(\alpha H_1(x_1))^{q_1}dx_1 \int L(x_3)^2\mu(x_3)^2 e^{-\alpha\mu(x_3)/4}dx_3 \right . \\
& \quad\quad\times\left . \int(\alpha H_1(x_2))^{q_2}dx_2 \int L(x_3)^2\mu(x_3)^2 e^{-\alpha\mu(x_3)/4}dx_3   \right )^{1/2}\\
&=O(\alpha^{(q_1/2+q_2/2+1)\sigma -2 }\ell_\sigma(\alpha)^{q_1/2+q_2/2+1})
\end{align*}
Similarly,
\begin{align*}
\int (\alpha H_1(x_1))^{q_1/2}W(x_1,x_3)W(x_2,x_3) e^{-\alpha\mu(x_3)/4}dx_1dx_2dx_3&=O(\alpha^{(q_1/2+1/2)\sigma -2 }\ell_\sigma(\alpha)^{q_1/2+1/2})\\
\int W(x_1,x_3)W(x_2,x_3) e^{-\alpha\mu(x_3)/4}dx_1dx_2dx_3&=O(\alpha^{\sigma -2 }\ell_\sigma(\alpha))
\end{align*}
and it follows that for any $q_1,q_2\in\{0,1,2\}$,
$
I_{\alpha,1}(q_1,q_2)=O(\alpha^{3\sigma-2}\ell_\sigma(\alpha)^3)=o(\alpha^{-1+4\sigma}\ell_\sigma(\alpha)^2)\label{eq:I1}
$ as required. Consider now
\begin{align*}
I_{\alpha,2}(q_1,q_2)&=\int (\alpha H_1(x_1))^{q_1/2}(\alpha H_1(x_2))^{q_2/2} e^{-\frac{\alpha}{6}(\mu(x_1)\1{q_1=0}+\mu(x_2)\1{q_2=0})}\\
&\quad\quad\times(\alpha H_{1,1}(x_2,x_3)+\sqrt{\alpha H_{2,2}(x_2,x_3)}) W(x_1,x_3)(e^{-\alpha/6 \mu(x_1)}+e^{-\alpha/6 \mu(x_3))})dx_1dx_2dx_3.
\end{align*}
We have, using Lemma \ref{lemma:bound}
\begin{align*}
I_{\alpha,2,1}(q_1,q_2):=&\int (\alpha H_1(x_1))^{q_1/2}(\alpha H_1(x_2))^{q_2/2} e^{-\frac{\alpha}{6}(\mu(x_1)\1{q_1=0}+\mu(x_2)\1{q_2=0})}\\
&\quad\quad\times \alpha H_{1,1}(x_2,x_3) W(x_1,x_3)e^{-\alpha/6 \mu(x_1)}dx_1dx_2dx_3\\
&\leq \alpha\left(\int (\alpha H_1(x_1))^{q_1/2} e^{-\frac{\alpha}{6}\mu(x_1)\1{q_1=0}}L(x_1)\mu(x_1)e^{-\alpha/6 \mu(x_1)}dx_1 \right)\\
&\quad\times \left( \int (\alpha H_1(x_2))^{q_2/2} e^{-\frac{\alpha}{6}\mu(x_2)\1{q_2=0}}W(x_2,y)L(y)\mu(y) e^{-\frac{\alpha}{4}\mu(y)}dx_2dy \right)
\end{align*}
If $q_1=0$,
$$\int e^{-\frac{\alpha}{3}\mu(x_1)}L(x_1)\mu(x_1)dx_1 =O(\alpha^{\sigma-1}\ell_\sigma(\alpha))$$
and if $q_1\geq 1$, using the bound \eqref{eq:boundH1a},
\begin{equation}
\int (\alpha H_1(x_1))^{q_1/2} L(x_1)\mu(x_1) e^{-\frac{\alpha}{6}\mu(x_1)}dx_1 =O(\alpha^{(q_1+1)\sigma/2-1}\ell_\alpha(\alpha)^{q_1/2+1/2})\label{eq:boundH1d}
\end{equation}
If $q_2=0$,
\begin{align*}
&\int  e^{-\frac{\alpha}{6}\mu(x_2)}W(x_2,y)L(y)\mu(y) e^{-\frac{\alpha}{4}\mu(y)}dx_2dy\leq \int  L(y)\mu(y)^2 e^{-\frac{\alpha}{4}\mu(y)}dy=O(\alpha^{\sigma-2}\ell_\sigma(\alpha))
\end{align*}
If $q_2\geq 1$, using H\"older's inequality, the bound \eqref{eq:boundintH1} and Assumptions \ref{assumpt:5} and \ref{assumpt:1},
\begin{align*}
&\int (\alpha H_1(x_2))^{q_2/2}W(x_2,y)L(y)\mu(y) e^{-\frac{\alpha}{4}\mu(y)}dx_2dy\\
&\leq \left(\int (\alpha H_1(x_2))^{q_2} \right )^{1/2} \int \left (\int W(x_2,y)^2dx_2 \right)^{1/2}L(y)\mu(y) e^{-\frac{\alpha}{4}\mu(y)}dy\\
&\leq \left(\int (\alpha H_1(x_2))^{q_2} \right )^{1/2} \int L(y)^2\mu(y)^2 e^{-\frac{\alpha}{4}\mu(y)}dy
=O(\alpha^{(q_2/2+1)\sigma-2}\ell_\sigma(\alpha^{q_2/2+1}))
\end{align*}
Hence $I_{\alpha,2,1}(q_1,q_2)=o(\alpha^{4\sigma-1} \ell_\sigma^2(\alpha))$. Consider now
\begin{align*}
I_{\alpha,2,2}(q_1,q_2)&:=\int (\alpha H_1(x_1))^{q_1/2}(\alpha H_1(x_2))^{q_2/2} e^{-\frac{\alpha}{6}(\mu(x_1)\1{q_1=0}+\mu(x_2)\1{q_2=0})}\\
&\quad\quad\times\alpha H_{1,1}(x_2,x_3) W(x_1,x_3)e^{-\alpha/6 \mu(x_3)}dx_1dx_2dx_3
\end{align*}
If $q_1=0$, we obtain the following bound, using the same computations as above,
\begin{align*}
I_{\alpha,2,2}(q_1,q_2)&\leq \alpha\int  e^{-\frac{\alpha}{6}\mu(x_1)}L(x_1)\mu(x_1)dx_1  \int (\alpha H_1(x_2))^{q_2/2} e^{-\frac{\alpha}{6}\mu(x_2)\1{q_2=0}}W(x_2,y)L(y)\mu(y) e^{-\frac{\alpha}{4}\mu(y)}dx_2dy\\
&=O(\alpha^{4\sigma-2}\ell_\sigma(\alpha)^{4\sigma})
\end{align*}
If $q_1>0$, we have
\begin{align*}
I_{\alpha,2,2}(q_1,q_2)&\leq \int \left(\int (\alpha H_1(x_1))^{q_1}dx_1\right )^{1/2}\left(\int W(x_1,x_3)^2 dx_1\right )^{1/2} (\alpha H_1(x_2))^{q_2/2} e^{-\frac{\alpha}{6}\mu(x_2)\1{q_2=0}}\\
&\quad\quad\times\alpha H_{1,1}(x_2,x_3) e^{-\alpha/6 \mu(x_3)}dx_2dx_3\\
&\leq \left(\int (\alpha H_1(x_1))^{q_1}dx_1\right )^{1/2}\int L(x_3)\mu(x_3)(\alpha H_1(x_2))^{q_2/2} e^{-\frac{\alpha}{6}\mu(x_2)\1{q_2=0}}\\
&\quad\quad\times\alpha H_{1,1}(x_2,x_3) e^{-\alpha/6 \mu(x_3)}dx_2dx_3\\
\end{align*}
If $q_2=0$, noting that $H_{1,1}(x_2,x_3)\leq L(x_2)\mu(x_2)L(x_3)\mu(x_3)$, we obtain
\begin{align*}
I_{\alpha,2,2}(q_1,q_2)&\leq \alpha\left(\int (\alpha H_1(x_1))^{q_1}dx_1\right )^{1/2}\int L(x_2)\mu(x_2)L(x_3)^2\mu(x_3)^2 e^{-\frac{\alpha}{6}\mu(x_2)}e^{-\alpha/6 \mu(x_3)}dx_2dx_3\\
&=O(\alpha^{(q_1/2+2)\sigma-2}\ell_\sigma(\alpha)^{q_1/2+2})
\end{align*}
If $q_2>0$, using H\"older's inequality and the bound \eqref{eq:boundH1c},
\begin{align*}
I_{\alpha,2,2}
&\leq  \alpha\left(\int (\alpha H_1(x_1))^{q_1}dx_1\right )^{1/2}\left(\int (\alpha H_1(x_2))^{q_2}dx_2\right )^{1/2}\\
&\quad\quad\times \left (\int L(x_3)^2\mu(x_3)^2e^{-\alpha/3 \mu(x_3)}dx_3 \right )^{1/2} \int L(y)^2\mu(y)^2e^{-\frac{\alpha}{4}\mu(y)} dy=O(\alpha^{7\sigma/2-3}\ell(\alpha)^{7\sigma/2})
\end{align*}
Now, using H\"older and the fact that $H_{2,2} \leq H_{1,1}$,  together with  \eqref{eq:boundH1a},
\begin{align*}
I_{\alpha,2,3}(q_1,q_2)&:=\int (\alpha H_1(x_1))^{q_1/2}(\alpha H_1(x_2))^{q_2/2} e^{-\frac{\alpha}{6}(\mu(x_1)\1{q_1=0}+\mu(x_2)\1{q_2=0})}\\
&\quad\quad\times(\sqrt{\alpha H_{2,2}(x_2,x_3)}) W(x_1,x_3)e^{-\alpha/6 \mu(x_1)}dx_1dx_2dx_3\\
&\leq \sqrt{\alpha}\int (\alpha H_1(x_1))^{q_1/2} L(x_1)\mu(x_1)e^{-\alpha \mu(x_1)/6}dx_1\\
&\quad\quad\times \int (\alpha H_1(x_2))^{q_2/2}\left (\int \mu(y)W(x_2,y)e^{-\frac{\alpha}{4}\mu(y)}dy \right)^{1/2}e^{-\frac{\alpha}{6}\mu(x_2)\1{q_2=0}} dx_2\\
&=O(\alpha^{(3+1/4)\sigma-3/2}\ell_\sigma(\alpha)^{3+1/4})
\end{align*}

Consider
\begin{align*}
I_{\alpha,2,4}(q_1,q_2)&:=\int (\alpha H_1(x_1))^{q_1/2}(\alpha H_1(x_2))^{q_2/2} e^{-\frac{\alpha}{6}(\mu(x_1)\1{q_1=0}+\mu(x_2)\1{q_2=0})}\\
&\quad\quad\times\sqrt{\alpha H_{2,2}(x_2,x_3)} W(x_1,x_3)e^{-\alpha/6 \mu(x_3)}dx_1dx_2dx_3\\
\end{align*}
If $q_1=0$, then, using the above computations,
\begin{align*}
I_{\alpha,2,4}(q_1,q_2)&\leq \int (\alpha H_1(x_2))^{q_2/2} e^{-\frac{\alpha}{6}(\mu(x_1)+\mu(x_2)\1{q_2=0})}\\
&\quad\quad\times\sqrt{\alpha H_{1,1}(x_2,x_3)} W(x_1,x_3)dx_1dx_2dx_3\\
&=O(\alpha^{(3+1/4)\sigma-3/2}\ell_\sigma(\alpha)^{3+1/4})
\end{align*}
If $q_1>0$ and $q_2=0$, noting that $H_{2,2}(x_2,x_3)\leq L(x_2)^2\mu(x_2)^2L(x_3)^2\mu(x_3)^2$ and using H\"older's inequality and Assumptions \ref{assumpt:5} and \ref{assumpt:1},
\begin{align*}
I_{\alpha,2,4}(q_1,q_2)&\leq \sqrt{\alpha}\left (\int (\alpha H_1(x_1))^{q_1}dx_1\right )^{1/2} \int L(x_3)\mu(x_3) e^{-\frac{\alpha}{6}\mu(x_2)}\sqrt{ H_{2,2}(x_2,x_3)}e^{-\alpha/6 \mu(x_3)}dx_2dx_3\\
&\leq \sqrt{\alpha}\left (\int (\alpha H_1(x_1))^{q_1}dx_1\right )^{1/2}\int L(x_3)^{2}\mu(x_3)^{2}e^{-\alpha/6 \mu(x_3)}dx_3\int  e^{-\frac{\alpha}{6}\mu(x_2)}\mu(x_2)L(x_2)dx_2\\
&=O(\alpha^{3\sigma -3}\ell_\sigma(\alpha)^{3})
\end{align*}

If $q_1,q_2>0$, noting that $\int H_{1,1}(x_2,x_3)=\int \mu(y)^2e^{-\frac{\alpha}{4}\mu(y)}dy=O(\alpha^{\sigma-2}\ell_\sigma(\alpha))$,
\begin{align*}
&I_{\alpha,2,4}(q_1,q_2)\\
&\leq \sqrt{\alpha}\left (\int (\alpha H_1(x_1))^{q_1}dx_1\right )^{1/2}\times \int L(x_3)\mu(x_3) (\alpha H_1(x_2))^{q_2/2}\sqrt{ H_{1,1}(x_2,x_3)}e^{-\alpha/6 \mu(x_3)}dx_2dx_3\\
&\leq \sqrt{\alpha}\left (\int (\alpha H_1(x_1))^{q_1}dx_1\right )^{1/2}\left (\int (\alpha H_1(x_2))^{q_2}dx_2\right )^{1/2}\\
&\quad\quad\times \left(\int L(x_3)^2\mu(x_3)^2e^{-\alpha/6 \mu(x_3)}dx_3\right)^{1/2} \left (\int H_{1,1}(x_2,x_3)dx_3dx_2 \right)^{1/2}=O(\alpha^{3\sigma -3/2}\ell_\sigma(\alpha^{3\sigma}))
\end{align*}
It follows that
\begin{align}
I_{\alpha,2}(q_1,q_2)=o(\alpha^{-3}v_\alpha^2).\label{eq:I2}
\end{align}

Finally, consider
\begin{align*}
I_{\alpha,3}(q_1,q_2)=\int(\alpha H_1(x_1))^{q_1/2}(\alpha H_1(x_2))^{q_2/2} e^{-\frac{\alpha}{6}(\mu(x_1)\1{q_1=0}+\mu(x_2)\1{q_2=0})}\sqrt{A_2(x_1,x_2,x_3)}dx_1dx_2dx_3
\end{align*}

where
\begin{align*}
&\sqrt{A_2(x_1,x_2,x_3)}\\&\leq \alpha^2 H_{1,1}(x_1,x_3)H_{1,1}(x_2,x_3) + \alpha^{3/2} \left ( H_{1,1}(x_2,x_3) \sqrt{ H_{2,2}(x_1,x_3) }  +H_{1,1}(x_1,x_3)\sqrt{ H_{2,2}(x_2,x_3) } \right)\\
&+\alpha \left (\sqrt{ H_{2,2}(x_1,x_3) }\sqrt{ H_{2,2}(x_2,x_3) } \right.\\
&\quad \quad\left . + \sqrt{W(y_1,x_2)W(y_1,x_1)W(y_1,x_3)^2W(y_2,x_1)W(y_2,x_2)W(y_2,x_3)^2   ) e^{-\alpha/2 (\sum_{i=1}^2\mu(y_i))}dy_{1:2}} \right)\\
&+\sqrt{\alpha} \left ( \int  W(y_1,x_1)^2W(y_1,x_2)^2W(y_1,x_3)^4  e^{-\alpha \mu(y_1)}dy_1\right )^{1/2}
\end{align*}
Using Assumption \ref{assumpt:5},
\begin{align*}
&\int  H_{1,1}(x_1,x_3))H_{1,1}(x_2,x_3)dx_3
&\leq \int W(x_1,y_1)L(y_1)\mu(y_1)e^{-\frac{\alpha}{4}\mu(y_1)}dy_1\int W(x_2,y_2)L(y_2)\mu(y_2)e^{-\frac{\alpha}{4}\mu(y_2)}dy_2
\end{align*}

Therefore, using the asymptotic bounds \eqref{eq:boundH1a} and Assumption \ref{assumpt:1},
\begin{align*}
&I_{\alpha,3,1}(q_1,q_2)\\
&:= \alpha^2\int(\alpha H_1(x_1))^{q_1/2}(\alpha H_1(x_2))^{q_2/2} e^{-\frac{\alpha}{6}(\mu(x_1)\1{q_1=0}+\mu(x_2)\1{q_2=0})}  H_{1,1}(x_1,x_3)H_{1,1}(x_2,x_3)dx_1dx_2dx_3\\
&\leq \alpha^2\int(\alpha H_1(x_1))^{q_1/2}(\alpha H_1(x_2))^{q_2/2}   H_{1,1}(x_1,x_3)H_{1,1}(x_2,x_3)dx_1dx_2dx_3\\
&\leq \alpha^2\int (\alpha H_1(x_1))^{q_1/2} L(y_1)\mu(y_1)W(x_1,y_1)e^{-\frac{\alpha}{4}\mu(y_1)}dy_1dx_1\\
&\quad\times\int (\alpha H_1(x_2))^{q_2/2} L(y_2)\mu(y_2)W(x_2,y_2) e^{-\frac{\alpha}{4}\mu(y_2)}dy_2dx_2\\
\end{align*}
for any $q_1,q_2\leq 2$. If $q=0$,
\begin{align*}
\int (\alpha H_1(x))^{q/2} L(y)\mu(y)W(x,y)e^{-\frac{\alpha}{4}\mu(y)}dydx=\int  L(y)\mu(y)^2e^{-\frac{\alpha}{4}\mu(y)}dy=O(\alpha^{\sigma-2}\ell_\sigma(\alpha))
\end{align*}
If $q>0$, noting that, using H\"older's inequality and Assumption 3,
\begin{align*}
\int (\alpha H_1(x))^{q/2}W(x,y)dx     \leq L(y)\mu(y)\left (\int (\alpha H_1(x))^q dx\right )^{1/2}
\end{align*}
we have
\begin{align*}
\int (\alpha H_1(x))^{q/2} L(y)\mu(y)W(x,y)e^{-\frac{\alpha}{4}\mu(y)}dydx&\leq \left (\int (\alpha H_1(x))^q dx\right )^{1/2}\left( \int  L(y)^2\mu(y)^2e^{-\frac{\alpha}{4}\mu(y)}dy \right)^{1/2}\\
=O(\alpha^{(q/2+1)\sigma-2}\ell_\sigma(\alpha)^{q/2+1}).
\end{align*}
It follows that $I_{\alpha,3,1}(q_1,q_2)=O(\alpha^{4\sigma-2}\ell(\alpha)^{4\sigma})$ for any $q_1,q_2\in\{0,1,2\}$. Consider now
\begin{align*}
I_{\alpha,3,2}(q_1,q_2):=&  \alpha^{3/2} \int(\alpha H_1(x_1))^{q_1/2}(\alpha H_1(x_2))^{q_2/2}e^{-\frac{\alpha}{6}(\mu(x_1)\1{q_1=0}+\mu(x_2)\1{q_2=0})} H_{1,1}(x_2,x_3) \\
&\quad\times\left (\int  W(y_1,x_1)^2W(y_1,x_3)^2 e^{-\alpha/3 \mu(y_1)} dy_1\right )^{1/2}dx_1dx_2dx_3\\
&\leq  \alpha^{3/2}  \int(\alpha H_1(x_1))^{q_1/2}(\alpha H_1(x_2))^{q_2/2} H_{1,1}(x_2,x_3) \\
&\quad\times\left (\int  W(y_1,x_1)^2W(y_1,x_3)^2 e^{-\alpha/3 \mu(y_1)} dy_1\right )^{1/2}dx_1dx_2dx_3
\end{align*}

Using Lemma \ref{lemma:bound},
\begin{align*}
&\int H_{1,1}(x_2,x_3) \left (\int  W(y_1,x_1)^2W(y_1,x_3)^2 e^{-\alpha/3 \mu(y_1)} dy_1\right )^{1/2}dx_3\\
&\leq \int W(x_2,y_2)L(y_2)\mu(y_2)e^{-\frac{\alpha}{4}\mu(y_2)}dy_2 \times \left (\int L(y_1)^{2}\mu(y_1)^{2} W(y_1,x_1)^{2} e^{-\frac{\alpha}{3}\mu(y_1) } dy_1 \right )^{1/2}
\end{align*}
Therefore
\begin{align*}
I_{\alpha,3,2}(q_1,q_2)&\leq \alpha^{3/2}  \int  (\alpha H_1(x_2))^{q_1/2} W(x_2,y_2)L(y_2)\mu(y_2)e^{-\frac{\alpha}{4}\mu(y_2)}dy_2dx_2 \\
&\quad\times \int (\alpha H_1(x_1))^{q_2/2} \left (\int L(y_1)^{2}\mu(y_1)^{2} W(y_1,x_1)^{2} e^{-\frac{\alpha}{3}\mu(y_1) } dy_1 \right )^{1/2}dx_1
\end{align*}
For $q_1=0$,
$$\int  L(y_2)\mu(y_2)^2e^{-\frac{\alpha}{4}\mu(y_2)}dy_2=O(\alpha^{\sigma-2}\ell_\sigma(\alpha)),$$
while for $q_1\geq 1$,
\begin{align*}
&\int  (\alpha H_1(x_2))^{q_1/2} W(x_2,y_2)L(y_2)\mu(y_2)e^{-\frac{\alpha}{4}\mu(y_2)}dy_2dx_2\\
&\leq \left (\int (\alpha H_1(x_2))^{q_1} dx_2 \times \int W(x_2,y_2)^2L(y_2)^2\mu(y_2)^2e^{-\frac{\alpha}{4}\mu(y_2)}dy_2dx_2   \right )^{1/2}\\
&\leq \left (\int (\alpha H_1(x_2))^{q_1} dx_2 \times \int L(y_2)^4\mu(y_2)^4e^{-\frac{\alpha}{4}\mu(y_2)}dy_2dx_2   \right )^{1/2}=O(\alpha^{(q_1/2+1/2)\sigma-2}\ell_\sigma(\alpha)^{q_1+1/2}).
\end{align*}
Additionally, for $q_2=0$, using H\"older's inequality and Assumptions \ref{assumpt:1} and \ref{assumpt:5},
\begin{align*}
&\int \left (\int L(y_1)^{2}\mu(y_1)^{2} W(y_1,x_1)^{2} e^{-\frac{\alpha}{3}\mu(y_1) } dy_1 \right )^{1/2}dx_1\\
&\leq \int \left ( \int L(y_1)^{4}\mu(y_1)^{4}  e^{-\frac{2\alpha}{3}\mu(y_1) } dy_1 \int W(y_1,x_1)^{4} dy_1 \right )^{1/4}dx_1\\
&\leq \int L(x_1)\mu(x_1) dx_1 \left ( \int L(y_1)^{4}\mu(y_1)^{4}  e^{-\frac{2\alpha}{3}\mu(y_1) } dy_1 \right )^{1/4}=O(\alpha^{\sigma/4-1}\ell_\sigma(\alpha)^{1/4})
\end{align*}
while for $q_2\geq 1$
\begin{align*}
&\int (\alpha H_1(x_1))^{q_2/2} \left (\int L(y_1)^{2}\mu(y_1)^{2} W(y_1,x_1)^{2} e^{-\frac{\alpha}{3}\mu(y_1) } dy_1 \right )^{1/2}dx_1\\
&\leq \left ( \int(\alpha H_1(x_1))^{q_2}dx_1\int L(y_1)^{2}\mu(y_1)^{2} W(y_1,x_1)^{2} e^{-\frac{\alpha}{3}\mu(y_1) } dy_1dx_1    \right )^{1/2}\\
&\leq \left ( \int(\alpha H_1(x_1))^{q_2}dx_1\int L(y_1)^{4}\mu(y_1)^{4}e^{-\frac{\alpha}{3}\mu(y_1) } dy_1    \right )^{1/2}=O(\alpha^{(q_2+1/2)\sigma-2}\ell_\sigma(\alpha)^{q_2/2+1/2}).
\end{align*}
Hence, for any $q_1,q_2\leq 2$, $I_{\alpha,3,2}=O(\alpha^{3\sigma - 2}\ell_\sigma(\alpha)^3)=o(\alpha^{-3}v_\alpha^2).$ Consider now
\begin{align*}
I_{\alpha,3,3}(q_1,q_2)&:= \alpha\int(\alpha H_1(x_1))^{q_1/2}(\alpha H_1(x_2))^{q_2/2}e^{-\frac{\alpha}{6}(\mu(x_1)\1{q_1=0}+\mu(x_2)\1{q_2=0})}   \\
&\quad\times \left [\left ( \int  W(y_1,x_1)^2W(y_1,x_3)^2W(y_2,x_2)^2W(y_2,x_3)^2e^{-\alpha/2 (\sum_{i=1}^2\mu(y_i))}dy_{1:2}\right )^{1/2}\right .\\
&\quad+\left .\left ( \int W(y_1,x_2)W(y_1,x_1)W(y_1,x_3)^2W(y_2,x_1)W(y_2,x_2)W(y_2,x_3)^2    e^{-\frac{\alpha}{2 (\sum_{i=1}^2\mu(y_i))}}dy_{1:2}\right )^{1/2}\right ]\\
&\quad\quad dx_1dx_2dx_3
\end{align*}
Using H\"older's inequality,
\begin{align*}
 &\int \left (\int  W(y_1,x_1)^2W(y_1,x_3)^2W(y_2,x_2)^2W(y_2,x_3)^2e^{-\alpha/2 (\sum_{i=1}^2\mu(y_i))}dy_{1:2}\right)^{1/2}dx_3\\
 &\leq \left (\int  W(y_1,x_1)^2W(y_1,x_3)^2e^{-\frac{\alpha}{2}\mu(y_1)}dy_{1}dx_3\right)^{1/2}\left (\int  W(y_2,x_2)^2W(y_2,x_3)^2e^{-\frac{\alpha}{2}\mu(y_2)}dy_{2}dx_3\right)^{1/2}\\
\end{align*}
For $q=0$, using Assumption \ref{assumpt:5} and \ref{assumpt:1}
\begin{align*}
\int e^{-\frac{\alpha}{6}\mu(x)}&\left (\int H_{2,2}(x,x_3) dx_3\right)^{1/2}dx\leq \int e^{-\frac{\alpha}{6}\mu(x)}\left (\int  W(y,x)^2W(y,x_3)dydx_3\right)^{1/2}dx\\
&=O(\alpha^{\sigma-1}\ell_\sigma(\alpha))
\end{align*}
Additionally,
\begin{align*}
 & \left (\int  W(y_1,x_1)^2W(y_1,x_3)^2e^{-\frac{\alpha}{2}\mu(y_1)}dy_{1}dx_3\right)^{1/2}\left (\int  W(y_2,x_2)^2W(y_2,x_3)^2e^{-\frac{\alpha}{2}\mu(y_2)}dy_{2}dx_3\right)^{1/2}\\
 &\leq \left (\int  L(y_1)^2\mu(y_1)^2 W(y_1,x_1)^2 e^{-\frac{\alpha}{2}\mu(y_1)}dy_{1}\right)^{1/2}\left (\int  L(y_2)^2\mu(y_2)^2W(y_2,x_2)^2e^{-\frac{\alpha}{2}\mu(y_2)}dy_{2}\right)^{1/2}
\end{align*}
It follows that, for $q\geq 1$, using H\"older's inequality and Assumptions \ref{assumpt:5} and \ref{assumpt:1}
\begin{align*}
&\int (\alpha H_1(x))^{q/2}\left (\int  L(y)^2\mu(y)^2 W(y,x)^2 e^{-\frac{\alpha}{2}\mu(y)}dy\right)^{1/2}dx\\
&\leq \left ( \int (\alpha H_1(x))^{q}dx \right )^{1/2}\left(\int  L(y)^4\mu(y)^4 e^{-\frac{\alpha}{2}\mu(y)}dy \right)^{1/2}=O\left (\alpha^{(q/2+1/2)\sigma-2}\ell_\sigma(\alpha)^{(q/2+1/2)}\right )
\end{align*}
Similarly, for the second term of $I_{\alpha,3,3}(q_1,q_2)$
\bgroup
\allowdisplaybreaks
\begin{align*}
&\int\left ( \int W(y_1,x_2)W(y_1,x_1)W(y_1,x_3)^2W(y_2,x_1)W(y_2,x_2)W(y_2,x_3)^2    e^{-\alpha/2 (\sum_{i=1}^2\mu(y_i))}dy_{1:2}\right )^{1/2}dx_3\\
&\leq \left (\int  W(y_1,x_1)W(y_1,x_2)W(y_1,x_3)^2e^{-\frac{\alpha}{2}\mu(y_1)}dy_{1}dx_3\int  W(y_2,x_1)W(y_2,x_2)W(y_2,x_3)^2e^{-\frac{\alpha}{2}\mu(y_2)}dy_{2}dx_3\right)^{1/2}
\end{align*}
\egroup
and, using H\"older's inequality and \eqref{eq:boundH1a},
\bgroup
\allowdisplaybreaks
\begin{align*}
&\int(\alpha H_1(x_1))^{q_1/2}(\alpha H_1(x_2))^{q_2/2}e^{-\frac{\alpha}{6}(\mu(x_1)\1{q_1=0}+\mu(x_2)\1{q_2=0})}   \\
&\left (\int  W(y_1,x_1)W(y_1,x_2)W(y_1,x_3)^2e^{-\frac{\alpha}{2}\mu(y_1)}dy_{1}dx_3\right)^{1/2}\\
&\quad\times\left (\int  W(y_2,x_1)W(y_2,x_2)W(y_2,x_3)^2e^{-\frac{\alpha}{2}\mu(y_2)}dy_{2}dx_3\right)^{1/2}dx_1dx_2\\
&\leq \left (\int(\alpha H_1(x_1))^{q_1}e^{-\frac{\alpha}{6}\mu(x_1)\1{q_1=0}}W(y_1,x_1)\mu(y_1)^3L(y_1)^2e^{-\frac{\alpha}{2}\mu(y_1)}dy_{1}dx_1  \right )^{1/2}  \\
&\quad\times \left (\int(\alpha H_1(x_2))^{q_2}e^{-\frac{\alpha}{6}\mu(x_2)\1{q_2=0}}W(y_2,x_2)\mu(y_2)^3L(y_2)^2e^{-\frac{\alpha}{2}\mu(y_2)}dy_{2}dx_2  \right )^{1/2}
\end{align*}
\egroup
For $q=0$,
\begin{align*}
&\left (\int e^{-\frac{\alpha}{6}\mu(x)}W(y,x)\mu(y)^3L(y)^2e^{-\frac{\alpha}{2}\mu(y)}dydx  \right )^{1/2}
=O(\alpha^{\sigma/2-2}\ell_\sigma(\alpha)^{1/2})
\end{align*}
while for $q\geq 1$,
\begin{align*}
&\left (\int (\alpha H_1(x))^qW(y,x)\mu(y)^3L(y)^2e^{-\frac{\alpha}{2}\mu(y)}dydx  \right )^{1/2}
&=O(\alpha^{(q/2+1/2)\sigma-3/2}\ell_\sigma(\alpha)^{q/2+1/2})
\end{align*}
Combining the above results, we obtain, for any $q_1,q_2\in\{0,1,2\}$,   $I_{\alpha,3,3}(q_1,q_2)=o(\alpha^{-3}v_\alpha^2).$ Consider finally
\begin{align*}
I_{\alpha,3,4}(q_1,q_2):=&\int(\alpha H_1(x_1))^{q_1/2}(\alpha H_1(x_2))^{q_2/2} e^{-\frac{\alpha}{6}(\mu(x_1)\1{q_1=0}+\mu(x_2)\1{q_2=0})} \\
&\times\sqrt{\alpha} \left ( \int  W(y_1,x_1)^2W(y_1,x_2)^2W(y_1,x_3)^4  e^{-\alpha \mu(y_1)}dy_1\right )^{1/2} dx_1dx_2dx_3
\end{align*}
We have
\begin{align*}
&\int \left ( \int  W(y_1,x_1)^2W(y_1,x_2)^2W(y_1,x_3)^4  e^{-\alpha \mu(y_1)}dy_1\right )^{1/2} dx_3\\
&\leq \left (\int L(x_3)\mu(x_3)dx_3\right )  \left (\int  W(y_1,x_1)^6  e^{-\alpha \mu(y_1)}dy_1\right)^{1/6} \left ( \int  W(y_1,x_2)^6  e^{-\alpha \mu(y_1)}dy_1  \right )^{1/6}
\end{align*}
and, for $q_1\geq1$, using H\"older's inequality,
\begin{align*}
&\int(\alpha H_1(x_1))^{q_1}  \left (\int  W(y_1,x_1)^6  e^{-\alpha \mu(y_1)}dy_1\right)^{1/6}dx_1=O(\alpha^{(q_1+1/6)\sigma-1}\ell_\sigma(\alpha)^{q_1+1/6}).
\end{align*}
For $q_1=0$,
\begin{align*}
\int  e^{-\frac{\alpha}{6}\mu(x_1)} \left (\int  W(y_1,x_1)^6  e^{-\alpha \mu(y_1)}dy_1\right)^{1/6}dx_1
&=O(\alpha^{\sigma-1}\ell_\sigma(\alpha))
\end{align*}
It follows that, for $q_1,q_2\leq 2$
\begin{align}
I_{\alpha,3,4}(q_1,q_2)=O(\alpha^{(4+1/3)\sigma-3/2 }\ell_\sigma(\alpha)^{4+1/3})=o(\alpha^{-3}v_\alpha^2)\label{eq:I34}\\
I_{\alpha,3}(q_1,q_2)=O(\alpha^{(4+1/3)\sigma-3/2 }\ell_\sigma(\alpha)^{4+1/3})=o(\alpha^{-3}v_\alpha^2)\label{eq:I3}
\end{align}
and, combining the bounds \eqref{eq:I1}, \eqref{eq:I2} and \eqref{eq:I3}, we obtain
$\gamma_{\alpha,1}\to0$.

\end{document}